\newtheorem{theorem}{Theorem}[section]
\newtheorem{corollary}{Corollary}[theorem]
\newtheorem{lemma}[theorem]{Lemma}
\newtheorem{proposition}[theorem]{Proposition}
\newtheorem{definition}{Definition}[section]
\newtheorem*{remark}{Remark}
\newtheorem{example}{Example}
\title{Bounded Trajectories of the $3x+\gamma$ Problem}
\author{Benjamin Bairrington}
\date{June 2023}
\begin{document}

\maketitle

\chapter*{Abstract}

The $3x+1$ Problem was first formally presented during the 1950 International Congress of Mathematics\cite{lagarias19853x+} by Dr. Collatz. In this thesis, we study the bounded trajectories of the $3x+\gamma$ Problem, which is a generalization of the $3x+1$ Problem.  Fix $\gamma \in \mathbb{Z}_{>0}^{odd}$ and $n\in\mathbb{Z}_{>0}$. We define the function $C_\gamma:{\mathbb Z}_{>0}\to {\mathbb Z}_{>0}$ such that if $n$ is odd, $C_\gamma(n)=3n+\gamma$; and if $n$ is even, $C_\gamma(n)=n/2$.  We call the sequence 
$\{n,C_\gamma(n),C_\gamma^2(n),\cdots\}$ a trajectory of $n$. The Collatz conjecture says that
for any $n\in {\mathbb Z}_{>0}$, there exists $k\in {\mathbb Z}_{>0}$ such that $C_1^k(n)=1$.
We define the characteristic mapping $\chi_\gamma: {\mathbb Z}_{>0}\to \{0,\, 1\}$ to be 
$\chi_\gamma(n)\equiv C_\gamma(n)\, {\rm mod}\, 2$. The sequence generated by compositions of 
$\chi_\gamma$ acting on $n$ is called the characteristic trajectory associated to $n$.
Assume that the trajectory of $n$ is bounded above. 
Then there exists some $n'$ contained in the trajectory and the smallest $N$ in $\mathbb{Z}_{>0}$ such  that  $C_\gamma^N(n') = n'$.  We call sequence $\{n'=C_\gamma^0(n'),\, C_\gamma(n'),\cdots, C_\gamma^{N-1}(n')\}$ an integral loop of length $N$ starting at $n'$ and associated with the $3x+\gamma$ Problem.  The existence of an integral loop other than $\{1,4,2\}$ would be a counterexample of the Collatz Conjecture.

Let $n$ start an integral loop of length $N$ associated with the $3x+\gamma$ Problem.  Let $\rho$ and $\nu$ be the count of the number of zeros and ones in a single period of $B = \left(\chi_\gamma^i(n)\right)_{i\geq 0}$.  In a single period of $B$, let $m_j$ denote the number of zeros between the $(j-1)$th and $j$th one.  Let $\mathcal{M}_n$ be the matrix associated to $n$ whose elements are the sequential products of $2^{m_j}$ (e.g. $(2^{m_0},2^{m_0+m_1},2^{m_0+m_1+m_2},...)$).  Let $p$ be a prime factor for all the terms in the integral loop starting with $n$ with multiplicity $a>0$.  Suppose also that $p$ is a prime factor of $\gamma$ and $2^\rho - 3^\nu$ with multiplicity $b$ and $c$, respectively.  Finally assume that $c>b-a$.  Then $\det(\mathcal{M}_n) \equiv 0 \pmod p$.  We do find examples of this property.  Let $\nu$ be prime.  Let $z_j = 2^{(m_j+2m_{j+1}+3m_{j+2}+...)/\nu}$ be a weighted arithmetic average of the $m_j$.  We prove that if $p$ is a prime factor of $2^\rho-3^\nu$ distinct from $\nu$ so that the residue class $p \pmod \nu$ generates the whole group $\mathbb{Z}_\nu^{*}$ then $p\nmid \det(\mathcal{M}_n)$ if and only if $p\nmid(z_1+...+z_\nu)$ and for any $1\leq i<j\leq \nu$ we have $z_i \not\equiv z_j \pmod p$.  By this, we give an interesting property for
the integral loop.

\vspace{0.5cm}

Keywords: Integral Loops, Trajectories, Characteristic Trajectories, Circulant Matrix, Number Theoretic Functions, $3x+1$ Problem.

\chapter{Introduction}

\section{Organization of This Thesis}

The chapters of this thesis are organized to direct the reader to Theorem \ref{central_theorem_X}.  Chapter one provides the historical, contemporary, and current research into the $3x+1$ Problem.  It closes by presenting a summary of the key results found, as well as relevant notation.  Chapter two presents the Integral Loop Formula in Equation (\ref{integral_loop_formula_equ}) and the integral loop vector in Equation (\ref{int_loop_vec}).  The integral loop vector is the primary mathematical tool used throughout this thesis.  Chapter three focuses on presenting the Integral Loop Formula in terms of the matrix $\mathcal{M}_n$ given in Definition \ref{m_n}.  We find that the the integral loop vector is the solution to the matrix product presented in Equation (\ref{central_2}).  We find the associated circulatant matrix $\mathcal{R}_n$ whose determinant, up to a sign change, is identical to $\mathcal{M}_n$.  Finally, this property was exploited yielding the result of Theorem \ref{central_theorem_X}.  Chapter four explores some additional topics related to the findings here, but were not suitable for the thesis proper.


\section{Historical Background}

The story of the $3x+1$ Problem formally begins at the 1950 International Congress of Mathematics with its creator: Dr. Collatz\cite{lagarias19853x+}.  At the Congress, Dr. Collatz told many of his colleagues about the $3x+1$ Problem.  Most notably  were Dr. Coxeter, Dr. Kakutani, and Dr. Ulam \cite{lagarias2010overview}.  In 1952 Dr. Brian Thwaites independently created the $3x+1$ Problem \cite{lagarias19853x+}.  Additionally in 1952, Dr. Collatz accepted a position at Hamburg University and presented the $3x+1$ Problem to his colleague Dr. Hasse \cite{whiteman1991memoriam}.  It is through these four (Dr. Coxeter, Dr. Kakutani, Dr. Ulam, and Dr. Hasse) that the problem would spread by word of mouth to many academic institutions worldwide.  This process began when Dr. Hasse presented the $3x+1$ Problem to Syracuse University, leading to the nick-names \textit{The Syracuse Problem} as well as \textit{Hasse's Algorithm} \cite{lagarias2010overview}.  In 1960 Dr. Kakutani presented the problem at Yale and the University of Chicago leading to the nick-name \textit{Kakutani's Problem} \cite{lagarias19853x+}.  From there it went to the Massachusetts Institute of Technology where the $3x+1$ Problem was checked for all starting values less than sixty million \cite{gardner1972mathematical}.

It was not until 1971 when the $3x+1$ Problem first appeared in print in a set of memorial lecture notes by Dr. H.S.M. Coxeter \cite{coxeter2010cyclic}.  Nevertheless, it remained an underground problem appearing as a novelty in Scientific American in 1972 \cite{gardner1972mathematical}.  In 1975 Dr. Hasse wrote about the problem \cite{hasse1975unsolved}, again putting it into print. In the mid-seventies Dr. Ulam spread the problem to Los Alamos National Laboratory \cite{lagarias2010overview} where it was received by Dr. Everett who wrote about the problem in 1977 \cite{everett1977iteration}.  Around this time the $3x+1$ Problem become an internationally known problem.  The $3x+1$ Problem remained unsolved to this day.  


\section{Modern Progress}

\begin{definition}\normalfont
The following mappings $C$ (Collatz) and $T$ ($3x+1$) map the integers to the integers in the following form:
\begin{equation}
\begin{aligned}
    C(n)&:=\left\{
                \begin{array}{ll}
                 3n+1, n\equiv 1\pmod{2},\\
                  \frac{n}{2},  n\equiv 0\pmod{2}
                \end{array}
              \right.
\end{aligned},
\begin{aligned}
    T(n)&:=\left\{
                \begin{array}{ll}
                 \frac{3n+1}{2}, n\equiv 1\pmod{2},\\
                  \frac{n}{2},  n\equiv 0\pmod{2}
                \end{array}
              \right.
\end{aligned}.
\end{equation}
\end{definition}

The conjecture regarding either $T$ or $C$ is whether for every positive integer $n$, there exists a positive integer $k$ such that $C^k(n)=1$, or $T^k(n)=1$, respectively.  This makes the conjecture a $\Pi_2$ statement that can be written in the form: $(\forall n)(\exists k)(C^k (n)=1)$ \cite{michel2010generalized}.
 
There are variants of `Collatz-like' problems that are sufficiently complex to be considered undecidable.  Fix $p\in \mathbb{Z}_{\geq 2}$ and $a_0,b_0,...a_{p-1},b_{p-1} \in \mathbb{Q}$.  We call a function $g:\mathbb{Z}\rightarrow\mathbb{Z}$ a \textit{generalized Collatz mapping}\cite{michel2010generalized} if for all $i$, $0\leq i \leq p-1$, and all $n\in \mathbb{Z}$,

\begin{equation}\label{ainbi}
g(n) = a_in+b_i ,\text{ if } n\equiv i \pmod p.\end{equation}

\noindent In this definition of $g$, $a_i ,b_i$ are chosen so that $a_i n+b_i$ is always integral when $n\equiv i \pmod p$.  Conway proved that given a function $g$ and an integer $n$, there exists no algorithm which determines if there exists positive integer $k$ such that $g^k (n)=1$ \cite{van2005collatz}\cite{michel2010generalized}\cite{conway1972iterations}.  This question was demonstrated to be undecidable by comparing it to fractional-linear mapping similar to Equation (\ref{ainbi}) with all $b_i = 0$ \cite{conway1972iterations} \cite{michel2010generalized}.  The fractional-linear mapping itself is undecidable because it is a special case of the FRACTRAN algorithm, which had been demonstrated to be a universal computer \cite{michel2010generalized}.  

Now consider a second kind of generalization.  Fix $p\in\mathbb{Z}_{>0}$ and $a_0,b_0,...,a_{p-1},b_{p-1}\in\mathbb{Q}$.  We call a function $h:\mathbb{Z}_{>0}\rightarrow\mathbb{Z}_{>0}$ a \textit{Collatz-type iteration function} \cite{rawsthorne1985imitation} if for all $i$, $0\leq i \leq p-1$ and all $n\in\mathbb{Z}_{>0}$,

\begin{equation}\label{cziterate}
h(n) = \frac{a_i n + b_i}{p}, \text{ if } n \equiv i \pmod p .
\end{equation}

In this definition of $h$, $a_i,b_i$ are chosen so that $a_i n+b_i \equiv 0 \pmod p$.  Let $n\in\mathbb{Z}_{>0}$ be given.  We call the sequence of iterations $\{n,h(n),h^2(n),h^3(n),...\}$ the \textit{trajectory of $n$}.  Additionally, we say the trajectory of $n$ \textit{converges to an integral loop} if there exists a $k, n'>0$ so that $n'$ is contained in the trajectory of $n$ and $h^k(n') = n'$.  It is proven that if $h$ distributes evenly among all of the residue classes modulo $p$, then almost all trajectories converge to an integral loop\cite{rawsthorne1985imitation}.

A similar `Collatz-like' mapping appears in Equation (\ref{3xt}):

\begin{equation}\label{3xt}
\begin{aligned} f: \mathbb{Z} &\rightarrow \mathbb{Z} \\
    f(n)&=\left\{
                \begin{array}{ll}
                  \frac{n}{2},\text{ } n \equiv 0 \pmod 2\\
                  3n+t,\text{ } n\in A_t
                \end{array}
              \right.
\end{aligned},
\end{equation}

\noindent where $A_t$ is a recursive set partitioning the odd positive integers, and $t$ = -9,-8,...,8,9 \cite{everett1977iteration}\cite{LEHTONEN2008596}.  By encoding Turing machines into sequences consisting of zeros and twos in base three, one can choose two integers in the set $\{-9,...,9\}$ ensuring that when the Turing machine halts, $f$ descends down to one without overlapping other Turing machines encoded in this manner\cite{LEHTONEN2008596}.  Thus a never halting computation is coded by a trajectory of $f$ diverging to infinity\cite{michel2010generalized}.  Suppose we have some algorithm that determines whether given $n\in\mathbb{Z}_{>0}$, there exists some $k>0$ so that $f^k(n)=1$.  Then given a universal Turing machine, such an algorithm would decide the halting problem for this machine.  This is a contradiction\cite{michel2010generalized}.

Consider the characteristic mapping $\chi$ of $T$ defined below.

\begin{definition} The characteristic mapping $\chi$ takes $N$ iterative steps (indexed by the variable $i$) of the $T$ mapping to a boolean variable determined by the parity of the value of the function at each step.
\begin{equation}
\begin{aligned}
    \chi_i(n)&=\left\{
                \begin{array}{ll}
                  1,\text{ } T^i(n) \equiv 1 \pmod2\\
                  0,\text{ } T^i(n)\equiv 0 \pmod2
                \end{array}
              \right. \\
n &\rightarrow \{\chi_0,\chi_1,\chi_2,...,\chi_{N-1}\}
\end{aligned}.
\end{equation}
\end{definition}

Induction can prove that the binary sequence of length $N$ is unique for integers $n < 2^N$ \cite{everett1977iteration}.  Hence, there exists an injective mapping between the integers $n<2^N$ and binary sequences of length $N$ \cite{everett1977iteration}.  Furthermore from a statistical standpoint, for every $n<2^N$, the probability that each inductive step of $\chi_i$ being a 1 or 0 approaches one-half (like a coin flip) for large $N$ \cite{dolan1987generalization}.  This property allows for the exploration of an `average growth rate' of the $T$ mapping. The average growth rate of a trajectory where $N-1$ odd numbers appears converges to 3/4 as $N$ goes to infinity  \cite{van2005collatz}.  It is shown that for the starting integers $n$ for which a value less than $n^{\ln(3)/\ln(4)}$ is reached after a finite number of steps along the trajectory of $n$ has an asymptotic density of 1 \cite{ivan1994density}.


\section{Current Developments in Research}

In the last twenty years, even further progress has been made with respect to the $3x+1$ Problem.  Oftentimes, this progress is made by considering further generalizations of the original question.  Let $p$ and $g$ be coprime integers with $g>p\geq 2$.  Let $h(x)$ be a periodic function satisfying: $h(x+p) = h(x)$,  $x+h(x) \equiv 0 \pmod p$, and $0 < |h(x)| < g$ for all $x$ not divisible by $p$.  Then consider the following mapping of the integers to the integers,

\begin{equation}\label{t-prime}
\tilde{T}(x) = \frac{gx+h(gx)}{p^k},
\end{equation}

\noindent where $k$ is uniquely chosen so that the result is not divisible by $p$.  It can be shown that for maps of this form \cite{kontrovich2006structure} infinity is either a point of convergence or a repelling point of the typical trajectory.  That is to say a typical trajectory returns to the origin if $\log (g) - \frac{p}{p-1} \log(p) <0$.

The $3x+1$ Problem can also be classified in the collection of Residue-Class-Wise Affine functions \cite{KOHL2007322}.  A mapping $f:\mathbb{Z}\rightarrow\mathbb{Z}$ is called \textit{Residue-Class-Wise Affine} if there is a positive integer $m$ such that $f$ is affine on the residue classes modulo $m$.  Let the smallest such $m$ be the referred to as the \textit{modulus} of $f$ and write Mod($f$).  Additionally, $f$ is called \textit{tame} if the set $\{\text{Mod}(f^{(k)}) | k \in \mathbb{Z}_{>0}\}$ is bounded, and \textit{wild} otherwise.  In this context, the $3x+1$ mapping T can be shown to be wild\cite{KOHL2007322}.

Another emerging occurrence of the $3x+1$ Problem is in Computational Theory.  In fact, it is proven\cite{Michel2014SimulationOT} that the $3x+1$ Conjecture is true if and only if for all positive integers (written in base three), there exists an $n \geq 0$ so that Turing Machine $M_1$\cite{Michel2014SimulationOT} eventually reaches the configuration $^\omega b 0^n (A1)b^\omega$.






This is not a unique occurrence.  Collatz-like functions have their place in theories regarding the \textit{Busy Beaver Game}.  Consider the collection of all Turing Machines with $n$ states (not including the halting state) and two symbols (not including the blank symbol) initialized on a completely blank tape that halts after a finite number of steps.  Let $S(n)$ denote the maximum number of computation steps made by each machine before halting, and $\Sigma(n)$ the maximum number of symbols 1 left remaining \cite{Michel2015}.  It has been proven that $S$ and $\Sigma$ are non-computable functions, so that for any computable function $f$ there exists an integer $N$ such that for all $n \geq N$, $S(n) > \Sigma(n) > f(n)$ \cite{Michel2015}.  Some, but not all of these machines reach the maximum value of $S(n)$ by simulating Collatz-like functions on their input.  It is hypothesized \cite{BB2020Aaronson} that progress on the Collatz Conjecture and Collatz-like functions go hand in hand with progress on determining the Busy Beaver champions.







Last, but certainly not least was the development regarding the density of orbits under the $3x+1$ mapping.  Let $N\in\mathbb{Z}_{>0}$ be given.  Let $C_{min}(N) = min_{i\geq 0}\{C^i(N)\}$.  It was shown \cite{tau2020} that for any function $f:\mathbb{Z}_{>0} \rightarrow \mathbb{R}$ with the property $\lim_{N\rightarrow \infty} f(N) = +\infty$, we have $C_{min} (N) \leq f(N)$ for almost all $N \in \mathbb{Z}_{>0}$.

\section{This Thesis' Contribution}

One could argue that the heart of this thesis is to be the spiritual successor of the work made by J. F. Alves, M.M. Gra\c ca, et. al\cite{alves2005linear}.  Their work framed the $3x+1$ Problem in the terms of linear algebra.  They succeeded in defining the compositions of the mapping $C$ in terms of matrix products.  Here we study a generalization of the $3x+1$ Problem, the $3x+\gamma$ Problem.

Fix $\gamma \in\mathbb{Z}_{>0}^{odd}$.  Let $C_\gamma : \mathbb{Z}_{>0} \rightarrow \mathbb{Z}_{>0}$ be defined as,

  \begin{equation}
    C_\gamma(n) =
    \begin{cases}
      3n+\gamma & \text{if } n \equiv 1 \pmod 2, \\
     \frac{n}{2}        & \text{if } n \equiv 0 \pmod 2.
    \end{cases}
  \end{equation}

\noindent Furthermore, for $k\geq 0$, denote function composition as $C_\gamma^{k+1}(n) = C_\gamma^k\circ C_\gamma(n)$ and $C_\gamma^0(n) = n$.  Let $n\in\mathbb{Z}_{>0}$ be given.  Let $\{C_\gamma^0(n),C_\gamma^1(n),...\}$ denote the \textit{trajectory} of $n$.  Here, we consider the limiting case when a given trajectory of a positive integer $n$ is bounded.  That is to say, for $\gamma\in\mathbb{Z}_{0}^{odd}$ and $n\in\mathbb{Z}_{>0}$ given, there exists some finite $\Omega>0$ such that the trajectory of $n$ is bounded above by $\Omega$.  This necessarily implies there exists some $n',N\in\mathbb{Z}_{>0}$ so that $n'$ is contained in the trajectory of $n$ and $C_\gamma^N(n') = n'$.  

Fix $n\in\mathbb{Z}_{>0}$.  If there exists some $N\in\mathbb{Z}_{>0}$ so that $C_\gamma^N(n) = n$ then we say $n$ starts an \textit{integral loop}.  Additionally, let $N = \min(\{i \in \mathbb{Z}_{>0} | C_\gamma^i(n) = n\})$ be the least positive integer so that $C_\gamma^N(n) = n$.  Then we say $n$ starts an integral loop of \textit{length} $N$.

Fix $\gamma\in\mathbb{Z}_{>0}^{odd}$. Let $\chi_\gamma: \mathbb{Z}_{>0}\rightarrow \{0,1\}$ be defined as,

  \begin{equation}
    \chi_\gamma(n) =
    \begin{cases}
      1 & \text{if } C_\gamma(n) \equiv 1 \pmod 2, \\
      0        & \text{if } C_\gamma(n) \equiv 0 \pmod 2.
    \end{cases}
  \end{equation}
  
\noindent Let $\chi_\gamma^0(n) \equiv n \pmod 2$.  Furthermore, for $k\geq 0$, denote function composition as $\chi_\gamma^{k+1}(n) = \chi_\gamma \circ C_\gamma^{k+1}(n)$.  Fix $n\in\mathbb{Z}_{>0}$.  Let $\{\chi_\gamma^0(n),\chi_\gamma^1(n),...\}$ denote the \textit{characteristic trajectory} of $n$.

\begin{example}
    The first eight terms of the trajectory and characteristic trajectory induced by $n=7$ and $\gamma=1$ appears below respectively,

    \begin{equation*}
        \{7,22,11,34,17,52,26,13\} \text{ and } \{1,0,1,0,1,0,0,1\}.
    \end{equation*}
\end{example}

Let $\{0,1\}^{**}$ denote the set of binary sequences of infinite length.  Fix $\gamma\in\mathbb{Z}_{>0}^{odd}$ and $n\in\mathbb{Z}_{>0}$.  We say the binary sequence $B\in\{0,1\}^{**}$ is \textit{associated} to the characteristic trajectory of $n$ if $B = (\chi_\gamma^i(n))_{i\geq 0}$. 

Suppose $n$ starts an integral loop of length $N$.  Then the binary sequence $B$ associated to $n$ has a repeating period of length $N$.  Let $\rho$ be the number of zeros in a single period of $B$.  Let $\nu$ be the number of ones in a single period of $B$.  Let $m_0$ denote the number of leading zeros.  Let $m_j$ denote the number of zeros between the $(j-1)$th and the $j$th one.  Let $m_\nu$ be the remaining zeros after the last one.  Note that $\sum_j m_j = \rho$.

\begin{example}
    The integral loop induced by $n=157$ and $\gamma=175$ appears below,
    
    \begin{equation*}
        \{157,646,323,1144,572,286,143,604,302,151,628,314\}.
    \end{equation*}
\end{example}

\begin{proposition}\label{AAA}\textit{
Fix $\gamma \in \mathbb{Z}_{>0}^{odd}$ and $n \in \mathbb{Z}_{>0}$.  Let $B\in \{0,1\}^{**}$ such that $B = (\chi_\gamma^i(n))_{i\geq 0}$.  Suppose that $N$ is the smallest positive
integer such that $C_\gamma^{N+1}(n) = n$.  Let $\rho$ and $\nu$ be the count of the number of zeros and ones, respectively, in $B$ for all $i$ between $0$ and $N$ inclusive.
\begin{enumerate}
\item If n is odd, let $m_1,...,m_{v-1}$ be the number of zeros between the $(j-1)$th and $j$th one.  Let $m_0 = 0$ and $m_\nu$ be the remaining zeros after the last one.
\item If n is even, let $m_0$ be the number of leading zeros.  Observe that $n = 2^{m_0}\tilde{n}$ for $\tilde{n}$ being odd.  So we obtain $m_1,...,m_{\nu-1}$ by (1) and $\tilde{n}$.
\end{enumerate}
Then $n$ satisfies the following equality}

\begin{equation} \label{AAB}
n = \frac{\gamma2^{m_0}(3^{\nu-1}+\sum_{r=1}^{\nu-1} 3^{\nu-1-r}2^{\sum_{j=1}^r m_j})}{2^\rho - 3^\nu}.
\end{equation}
\end{proposition}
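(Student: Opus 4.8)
The plan is to translate the loop structure directly into a first-order linear recurrence on the odd numbers appearing in the trajectory, unfold it once around the cycle by induction, and solve for $n$. First I would record the dictionary between the characteristic sequence and the dynamics: since $\chi_\gamma^i(n) \equiv C_\gamma^i(n) \pmod 2$, a $1$ marks an odd iterate (where $C_\gamma$ acts by $x \mapsto 3x+\gamma$) and a $0$ marks an even iterate (where $C_\gamma$ halves). Assume first that $n$ is odd, and let $n_0 = n, n_1, \ldots, n_{\nu-1}$ be the successive \emph{odd} values occurring in one period, listed in order of appearance. Between the $(j-1)$th and $j$th one there are exactly $m_j$ zeros, so passing from $n_{j-1}$ to $n_j$ is one application of $3x+\gamma$ followed by $m_j$ halvings, giving the recurrence
\begin{equation}
2^{m_j} n_j = 3 n_{j-1} + \gamma, \qquad j = 1, \ldots, \nu,
\end{equation}
where the loop closes via $n_\nu = n_0$.

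Writing $S_r = \sum_{j=1}^r m_j$ with $S_0 = 0$, I would next prove by induction on $r$ that
\begin{equation}
2^{S_r} n_r = 3^r n_0 + \gamma \sum_{k=0}^{r-1} 3^{r-1-k} 2^{S_k}.
\end{equation}
The base case $r=0$ is the trivial identity $n_0 = n_0$, and the inductive step multiplies the hypothesis by $3$ and adds $\gamma\, 2^{S_r}$, using $2^{m_{r+1}} n_{r+1} = 3 n_r + \gamma$ to absorb the new term and re-index the sum. Taking $r = \nu$, substituting $S_\nu = \rho$ and $n_\nu = n_0$, and solving the linear equation $(2^\rho - 3^\nu) n_0 = \gamma \sum_{k=0}^{\nu-1} 3^{\nu-1-k} 2^{S_k}$ for $n_0$, then splitting off the $k=0$ term $3^{\nu-1}$, yields Equation (\ref{AAB}) in the case $m_0 = 0$.

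For even $n$, write $n = 2^{m_0}\tilde n$ with $\tilde n$ odd; then $\tilde n$ lies in the same loop and is the first odd iterate reached after the $m_0$ leading halvings. Reading the period starting from $\tilde n$ places it in the odd case already treated, with the single difference that the final gap wrapping back to $\tilde n$ has length $m_\nu + m_0$ rather than $m_\nu$, since in the cyclic picture the trailing zeros and the leading zeros merge. These extra zeros enter the closed form only through the total $\rho = m_0 + m_1 + \cdots + m_\nu$ sitting in the denominator $2^\rho - 3^\nu$; the numerator sum runs only to $r = \nu-1$ and thus depends only on $m_1, \ldots, m_{\nu-1}$. Hence the odd-case formula for $\tilde n$ is unchanged, and multiplying by $2^{m_0}$ recovers the stated expression.

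The engine here --- the induction --- is mechanical, so the step demanding genuine care is the even case: one must verify that the leading block of $m_0$ zeros and the trailing block in the $\tilde n$-frame really do coalesce into a single gap of length $m_0 + m_\nu$, and that this gap contributes to $\rho$ alone and not to the interior sum, so that the factor $2^{m_0}$ appears outside rather than inside. I would confirm the index conventions and the placement of $2^{m_0}$ by checking against the worked loop ($n = 157$, $\gamma = 175$), where $2^\rho - 3^\nu = 2^8 - 3^4 = 175$ and the numerator sum evaluates to $157$, so that the formula returns $n = 175 \cdot 157 / 175 = 157$ as required.
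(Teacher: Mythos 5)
Your proof is correct, and it is organized differently from the paper's. The paper never proves Proposition \ref{AAA} directly: it first establishes Proposition \ref{tt_Cg}, a closed form for $C_\gamma^N(n)$ along an \emph{arbitrary} trajectory segment, by unfolding the composition step by step (each odd iterate contributing a factor $3x+\gamma$, each block of zeros contributing $m_j$ halvings), and then obtains the loop formula as a one-line corollary by setting $C_\gamma^N(n)=n$. You instead restrict from the outset to the odd iterates $n_0,\dots,n_{\nu-1}$, encode the dynamics in the single recurrence $2^{m_j}n_j = 3n_{j-1}+\gamma$, induct to the closed form $2^{S_r}n_r = 3^r n_0 + \gamma\sum_{k=0}^{r-1}3^{r-1-k}2^{S_k}$, and impose the cycle closure $n_\nu=n_0$ only at the end; the telescoping engine is the same (your closed form is implicit in the paper's unfolding), but the packaging is not. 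The difference is most visible in the even case: the paper handles it inside the segment formula via $\tilde\rho=\rho-m_0$ and a $2^{m_0}/2^{m_0}$ manipulation, with no cyclic reasoning needed, whereas your direct loop argument requires showing that in the frame of $\tilde n$ the trailing and leading zero blocks coalesce into a single gap of length $m_\nu+m_0$ that enters only through $\rho$ and not through the interior sum --- you correctly flag this as the delicate step and justify it. What the paper's detour buys is the standalone trajectory formula of Proposition \ref{tt_Cg}, which it reuses for non-periodic trajectories (e.g.\ the verification $C^9(19)=52$); what your route buys is a shorter, self-contained proof of the loop formula whose recurrence-on-odd-iterates viewpoint makes transparent the linear system underlying Equation (\ref{central_2}) in chapter three.
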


\begin{remark} Proposition \ref{AAA} can be deduced as a corollary of Proposition \ref{tt_Cg}.
\end{remark}

\begin{example}
    Consider again the integral loop induced by $n=157$ and $\gamma=175$.  With respect to Equation (\ref{AAB}) we find that,

    \begin{equation*}
    \begin{aligned}
        n &= \frac{175(3^{4-1}+3^{4-2}\times 2^{1}+3^{4-3}\times 2^{1+3}+3^{4-4}\times 2^{1+3+2})}{2^8-3^4}, \\
        &=\frac{175(27+9\times 2 + 3\times 16 + 1\times 64)}{175} = 157.
    \end{aligned}
    \end{equation*}
\end{example}

Fix $\gamma\in\mathbb{Z}_{>0}^{odd}$, $n\in\mathbb{Z}_{>0}^{odd}$, and $N\in\mathbb{Z}_{>0}$.  Suppose that $n$ starts an integral loop of length $N$.  Let $i\geq 0$ be the indexing of $B$ so that $B = (\chi_\gamma^i(n))_{i\geq 0}$.  Let $i_j$ be the sub-indexing of $i$ so that for all $0 \leq j \leq \nu-1$, we have $\chi_\gamma^{i_j}(n) \equiv 1 \pmod 2$.  Note that $i_0 = 0$.  Let $e_j$ denote the standard basis of $\mathbb{R}^\nu$.  We call the array $[n]\in\mathbb{Z}^\nu$ the \textit{integral loop vector}, which appears in the following form:

\begin{equation}
[n] = \begin{bmatrix} C_\gamma^{i_0}(n) \\ C_\gamma^{i_1}(n) \\ \vdots \\ C_\gamma^{i_{\nu-1}}(n) \end{bmatrix} = \sum_{j=1}^\nu C_\gamma^{i_{j-1}}(n)e_j .
\end{equation}

\noindent Let $\overrightarrow{\mathfrak{3}}^\nu \in \mathbb{Z}^\nu$ be the vector whose entries are ascending powers of three.

\begin{equation}
    \overrightarrow{\mathfrak{3}}^\nu = \sum_{j=1}^\nu 3^{\nu-j} e_j .
\end{equation}

\begin{example}
    Let $\nu = 5$.

\begin{equation*}\overrightarrow{\mathfrak{3}}^5 = \begin{bmatrix} 3^4 \\ 3^3 \\ 3^2 \\3^1 \\3^0 \end{bmatrix}.\end{equation*}
\end{example}

\noindent Let $\mathcal{P}$ be the $\nu \times \nu$ permutation matrix characterized by the following formula:
 
\begin{equation}
\mathcal{P} = \sum_{j=1}^\nu e_{j,|j|_\nu+1},
\end{equation}

\noindent where $|j|_\nu$ denotes $j \mod \nu$.  We define the $\nu\times\nu$ matrix $\mathcal{L}_n$ associated to $n$ to be the diagonal matrix whose entries are powers of two determined by the following formula:

\begin{equation}
\mathcal{L}_n = \sum_{j=1}^\nu 2^{m_j} e_{j,j}.
\end{equation}

\noindent We define the $\nu\times\nu$ matrix $\mathcal{D}_n$ associated to $n$ to be the result of the matrix product of $\mathcal{L}_n$ and $\mathcal{P}$.

\begin{equation}\label{DDD}
\mathcal{D}_n = \mathcal{L}_n \times \mathcal{P} = \sum_{j=1}^\nu 2^{m_j}e_{j,|j|_\nu+1}.
\end{equation}

\noindent We define the following $\nu \times \nu$ matrix associated to $n$, referred to as $\mathcal{M}_n$, below:

\begin{equation}\label{BBB}
\mathcal{M}_n = \sum_{k=1}^\nu \mathcal{D}_n^{k-1}\sum_{j=1}^\nu e_{j,k}.
\end{equation}

\noindent Where $\mathcal{D}_n^k$ denote the $k$th power of $\mathcal{D}_n$ and $\sum_{j=1}^\nu e_{j,k}$ denotes the matrix whose $k$th column is all ones and is zero everywhere else.

\begin{example}
    Consider the integral loop induced by $n=85$ and $\gamma=47$.  The matrix $\mathcal{M}_{85}$ appears below,

    \begin{equation*}
        \mathcal{M}_{85} = 
        \begin{bmatrix} 
        1 & 2 & 8 & 16 \\
        1 & 4 & 8 & 64 \\
        1 & 2 & 16 & 32 \\
        1 & 8 & 16 & 64
        \end{bmatrix}.
    \end{equation*}
\end{example}

\begin{proposition}\textit{Fix $\gamma \in \mathbb{Z}^{odd}_{>0}$, $n\in \mathbb{Z}^{odd}_{>0}$, and $N\in\mathbb{Z}_{>0}$.  Suppose further that $n$ starts an integral loop of length $N$.  Let $B \in \{0,1\}^{**}$ such that $B = (\chi_\gamma^i(n))_{i \geq 0}$.  Let $\rho$ and $\nu$ be the count of the number of zeros and ones in a single period of $B$.  Let $[n]$ be the associated integral loop vector of $n$.  Let the matrix $\mathcal{M}_n$ be given by Equation (\ref{BBB}).  Then $[n]$ and $\mathcal{M}_n$ satisfy the following equality,}

\begin{equation}\label{FFF}
[n] = \frac{\gamma}{2^\rho - 3^\nu}\mathcal{M}_n \overrightarrow{\mathfrak{3}}^\nu.
\end{equation}

\noindent \textit{Where $\mathcal{M}_n \overrightarrow{\mathfrak{3}}^\nu$ denotes the product of the $\nu \times \nu$ matrix $\mathcal{M}_n$ and the $\nu \times 1$ column vector $\overrightarrow{\mathfrak{3}}^\nu$.}

\end{proposition}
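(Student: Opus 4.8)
The plan is to recast the defining recursion of the integral loop as a single linear system in the unknown vector $[n]$, and then to recognize $\tfrac{\gamma}{2^\rho-3^\nu}\mathcal{M}_n\overrightarrow{\mathfrak{3}}^\nu$ as the unique solution of that same system. First I would record the elementary step relation between consecutive odd terms of the loop. If $a_1=n,a_2,\dots,a_\nu$ are the odd terms in order (so $a_j=C_\gamma^{i_{j-1}}(n)$, the entries of $[n]$), then one application of $C_\gamma$ sends the odd $a_j$ to the even number $3a_j+\gamma$, after which exactly $m_j$ halvings reach the next odd term. Reading indices cyclically (with $a_{\nu+1}=a_1$, since the loop closes up and $m_\nu$ counts the trailing zeros that wrap around), this gives $2^{m_j}a_{j+1}=3a_j+\gamma$ for $j=1,\dots,\nu$. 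Since $(\mathcal{D}_n[n])_j=2^{m_j}([n])_{|j|_\nu+1}$ by (\ref{DDD}), these $\nu$ scalar equations are exactly the single matrix identity $(\mathcal{D}_n-3I)[n]=\gamma\mathbf{1}$, where $\mathbf{1}$ is the all-ones column vector.

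Next I would rewrite the right-hand side. Unwinding (\ref{BBB}) and using that $\sum_{j=1}^\nu e_{j,k}$ is the matrix whose $k$th column is all ones and zero elsewhere, the product $\mathcal{D}_n^{k-1}\sum_{j=1}^\nu e_{j,k}$ has $k$th column $\mathcal{D}_n^{k-1}\mathbf{1}$ and is zero elsewhere; hence the $k$th column of $\mathcal{M}_n$ is $\mathcal{D}_n^{k-1}\mathbf{1}$. Pairing columns against the entries $3^{\nu-k}$ of $\overrightarrow{\mathfrak{3}}^\nu$ yields $\mathcal{M}_n\overrightarrow{\mathfrak{3}}^\nu=\sum_{r=0}^{\nu-1}3^{\nu-1-r}\mathcal{D}_n^r\mathbf{1}$. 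The heart of the argument is then to multiply this by $\mathcal{D}_n-3I$: the sum telescopes, leaving only the boundary terms $\mathcal{D}_n^\nu\mathbf{1}-3^\nu\mathbf{1}$. To close this step I would evaluate $\mathcal{D}_n^\nu\mathbf{1}$: iterating the step relation gives $(\mathcal{D}_n^r\mathbf{1})_j=2^{m_j+m_{j+1}+\cdots+m_{j+r-1}}$, and since $\mathcal{P}^\nu=I$, after $\nu$ applications every exponent collects all of $m_1+\cdots+m_\nu=\rho$, so $\mathcal{D}_n^\nu\mathbf{1}=2^\rho\mathbf{1}$. Therefore $(\mathcal{D}_n-3I)\mathcal{M}_n\overrightarrow{\mathfrak{3}}^\nu=(2^\rho-3^\nu)\mathbf{1}$.

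Finally I would combine the two computations. Both $[n]$ and $\tfrac{\gamma}{2^\rho-3^\nu}\mathcal{M}_n\overrightarrow{\mathfrak{3}}^\nu$ satisfy $(\mathcal{D}_n-3I)\mathbf{x}=\gamma\mathbf{1}$, so it remains to check that $\mathcal{D}_n-3I$ is invertible. In its determinant only the identity permutation and the full $\nu$-cycle contribute, giving $\det(\mathcal{D}_n-3I)=(-3)^\nu+(-1)^{\nu-1}2^\rho=(-1)^{\nu-1}(2^\rho-3^\nu)$, which is nonzero because $2^\rho=3^\nu$ is impossible for $\nu\geq 1$ by unique factorization. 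Invertibility then forces $[n]=\tfrac{\gamma}{2^\rho-3^\nu}\mathcal{M}_n\overrightarrow{\mathfrak{3}}^\nu$, which is (\ref{FFF}). I expect the main obstacle to be organizational rather than analytic: getting the cyclic indexing of the $m_j$ exactly right when setting up $(\mathcal{D}_n-3I)[n]=\gamma\mathbf{1}$ — in particular the wrap-around term $m_\nu$ and the off-by-one between ``the $j$th odd term'' and ``the gap $m_j$'' — and verifying that the compact definition (\ref{BBB}) really does produce the columns $\mathcal{D}_n^{k-1}\mathbf{1}$. Once that bookkeeping is pinned down, the telescoping identity and the evaluation $\mathcal{D}_n^\nu\mathbf{1}=2^\rho\mathbf{1}$ are routine.
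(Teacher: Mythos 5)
Your proof is correct, and it takes a genuinely different route from the paper's. The paper disposes of this proposition (its Proposition \ref{3v}) in one line: it is read off from the scalar Integral Loop Formula, Proposition \ref{integral_loop_formula}, applied entry by entry --- each odd term $C_\gamma^{i_{j-1}}(n)$ of the loop starts its own integral loop whose gap sequence is the cyclic shift $(m_j, m_{j+1}, \dots)$, and the $j$th row of $\mathcal{M}_n$ dotted with $\overrightarrow{\mathfrak{3}}^\nu$ is exactly the numerator of Equation (\ref{integral_loop_formula_equ}) for that shifted sequence. You instead argue globally: the step relations $2^{m_j}a_{j+1} = 3a_j + \gamma$ (your wrap-around bookkeeping is sound and is precisely what the paper's Proposition \ref{telescope} certifies, via $m_\nu = N-1-i_{\nu-1}$) package into $(\mathcal{D}_n - 3\mathbb{I}_\nu)[n] = \gamma\overrightarrow{\mathfrak{1}}^\nu$; your column identification of $\mathcal{M}_n$ and the telescoping computation, together with $\mathcal{D}_n^\nu = 2^\rho\,\mathbb{I}_\nu$ (the paper's Proposition \ref{exp_rule}), show that $\frac{\gamma}{2^\rho-3^\nu}\mathcal{M}_n\overrightarrow{\mathfrak{3}}^\nu$ solves the same linear system; and $\det(\mathcal{D}_n - 3\mathbb{I}_\nu) = (-1)^{\nu+1}(2^\rho - 3^\nu) \neq 0$ gives uniqueness (your permutation analysis of which two terms survive in the determinant is correct). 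Notably, this is essentially the content of the paper's own Proposition \ref{dd_n} in chapter four, where both the identity $(\mathcal{D}_n - 3\mathbb{I}_\nu)[n] = \gamma\overrightarrow{\mathfrak{1}}^\nu$ and this determinant appear; you have in effect re-derived that later material and run it in reverse to obtain (\ref{FFF}). The paper's route buys brevity, since Proposition \ref{tt_Cg} already performed the trajectory unfolding once and the cyclic-shift observation handles all $\nu$ rows simultaneously; your route buys independence from the scalar formula (no per-row re-use of the induction), makes explicit the nonvanishing of $2^\rho - 3^\nu$ that the statement's denominator silently assumes, and yields as byproducts the invertibility of $\mathcal{D}_n - 3\mathbb{I}_\nu$ and the closed form $[n] = \gamma(\mathcal{D}_n - 3\mathbb{I}_\nu)^{-1}\overrightarrow{\mathfrak{1}}^\nu$, which the paper defers to its ``Special Linear Group'' segue.
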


\begin{example}
    Again consider the integral loop induced by $n=85$ and $\gamma=47$.  We verify Equation (\ref{FFF}),

    \begin{equation*}
        [85] = \begin{bmatrix} 85 \\ 151 \\ 125 \\ 211 \end{bmatrix} = \frac{47}{47} \begin{bmatrix} 
        1 & 2 & 8 & 16 \\
        1 & 4 & 8 & 64 \\
        1 & 2 & 16 & 32 \\
        1 & 8 & 16 & 64
        \end{bmatrix} \begin{bmatrix} 27 \\ 9 \\ 3 \\ 1 \end{bmatrix} = \frac{47}{2^7-3^4}\begin{bmatrix} 
        1 & 2 & 8 & 16 \\
        1 & 4 & 8 & 64 \\
        1 & 2 & 16 & 32 \\
        1 & 8 & 16 & 64
        \end{bmatrix} \begin{bmatrix} 27 \\ 9 \\ 3 \\ 1 \end{bmatrix}.
    \end{equation*}
\end{example}

\begin{proposition}\label{delta_2}\textit{
    Fix $\gamma,n \in \mathbb{Z}_{>0}^{odd}$ and $N\in\mathbb{Z}_{>0}$.  Let $n$ start an integral loop of length $N$.  Let $B\in \{0,1\}^{**}$ such that $B = (\chi_\gamma^i(n))_{i\geq 0}$.  Let $\rho$ and $\nu$ be the count of the number of zeros and ones in a single period of $B$.  Let $[n]$ be the associated integral loop vector of $n$.  Let $\mathcal{M}_n$ be constructed as by Equation (\ref{m_n}).  Let $(i_j)_{j=0}^{\nu-1}$ be the indexing sequence associated to $n$.  Let $s = \gcd(n,C_\gamma^{i_1}(n))$ and $p$ a prime factor of $s$.  Define $a,d\in\mathbb{Z}_{\geq 0}$ as the maximal exponents such that $p^a$ divides every component of $[n]$ and $p^d$ divides every component of $\mathcal{M}_n \overrightarrow{\mathfrak{3}}^\nu$.  Define $b,c\in\mathbb{Z}_{\geq 0}$ as the maximal exponents so that $p^b$ divides $\gamma$ and $p^c$ divides $(2^\rho - 3^\nu)$.  Then $a,b,c,$ and $d$ satisfy the following equality:}


    \begin{equation*}
        a-b+c-d = 0.
    \end{equation*}
\end{proposition}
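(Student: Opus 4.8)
The plan is to clear the denominator in Equation~(\ref{FFF}) and then read the identity off from the additivity of the $p$-adic valuation, which I denote $\operatorname{ord}_p$. Writing $w := \mathcal{M}_n \overrightarrow{\mathfrak{3}}^\nu$ and multiplying Equation~(\ref{FFF}) through by $2^\rho - 3^\nu$ turns it into the componentwise integer identity
\begin{equation*}
(2^\rho - 3^\nu)\,[n]_k = \gamma\, w_k, \qquad 1 \le k \le \nu .
\end{equation*}
First I would confirm that this is an honest equation of nonzero integers: the entries $[n]_k = C_\gamma^{i_{k-1}}(n)$ are the positive integers of the loop, the entries of $\mathcal{M}_n$ (Equation~(\ref{BBB})) are powers of two and those of $\overrightarrow{\mathfrak{3}}^\nu$ are powers of three, so $w \in \mathbb{Z}^\nu$, and $2^\rho - 3^\nu \ne 0$ by unique factorization. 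Since $[n]_k > 0$, solving for $w_k$ shows each $w_k \ne 0$, so every valuation appearing below is finite.

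The key step is to apply $\operatorname{ord}_p$ to each component and use $\operatorname{ord}_p(xy) = \operatorname{ord}_p(x) + \operatorname{ord}_p(y)$. With $b = \operatorname{ord}_p(\gamma)$ and $c = \operatorname{ord}_p(2^\rho - 3^\nu)$, the displayed identity becomes
\begin{equation*}
c + \operatorname{ord}_p([n]_k) = b + \operatorname{ord}_p(w_k), \qquad 1 \le k \le \nu,
\end{equation*}
equivalently $\operatorname{ord}_p([n]_k) = (b - c) + \operatorname{ord}_p(w_k)$ for every $k$, where the quantity $b - c$ is independent of $k$.

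Finally I would minimize over $k$. By definition, $a$ and $d$ are the largest exponents for which $p^a \mid [n]_k$ and $p^d \mid w_k$ hold for all $k$; that is, $a = \min_k \operatorname{ord}_p([n]_k)$ and $d = \min_k \operatorname{ord}_p(w_k)$. Since the constant $b - c$ pulls out of the minimum,
\begin{equation*}
a = \min_{1 \le k \le \nu} \operatorname{ord}_p([n]_k) = (b - c) + \min_{1 \le k \le \nu} \operatorname{ord}_p(w_k) = (b - c) + d ,
\end{equation*}
which is exactly $a - b + c - d = 0$.

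I do not expect a genuine obstacle: all of the content sits in reducing Equation~(\ref{FFF}) to a true integer identity and invoking multiplicativity of $\operatorname{ord}_p$, so the only care required is the finiteness of the valuations handled above. It is worth emphasizing that the hypothesis $p \mid s = \gcd(n, C_\gamma^{i_1}(n))$ is never actually used in the derivation — the identity $a - b + c - d = 0$ holds for \emph{every} prime $p$. That hypothesis instead serves to isolate the arithmetically interesting primes: because $n$ is odd the prime $p$ is odd, and a short induction on the loop recursion $2^{m_j}\,[n]_{j+1} = 3\,[n]_j + \gamma$ shows that $p \mid s$ forces $p \mid \gamma$ and $p$ to divide every loop entry, so that $a \ge 1$ and $b \ge 1$ and the statement is non-vacuous.
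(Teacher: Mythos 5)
Your proof is correct and is essentially the paper's argument: the paper's entire proof is the citation ``This follows from Proposition \ref{3v},'' and your derivation --- clearing the denominator in Equation (\ref{central_2}) to the componentwise integer identity $(2^\rho-3^\nu)[n]_k = \gamma\, w_k$, applying $\operatorname{ord}_p$, and pulling the constant $b-c$ out of the minimum over $k$ --- is precisely the expansion of that one-line citation, with the finiteness checks done properly. Your closing observation is also accurate: the identity $a-b+c-d=0$ holds for every prime, and the hypothesis $p \mid s = \gcd(n, C_\gamma^{i_1}(n))$ serves only (via Lemma \ref{alpha}, which applies since $n$ odd forces $p>2$) to guarantee $p \mid \gamma$ and $p$ dividing every loop entry, so that $a,b \geq 1$ and the conclusion is non-vacuous --- exactly the role reflected in the paper's remark following the proposition.
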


\begin{theorem}
    \textit{We will use the notation presented in Proposition \ref{delta_2}.  Furthermore assume that $c>b-a$. Fix $\gamma, n \in \mathbb{Z}_{>0}^{odd}$ and $N\in \mathbb{Z}_{>0}$.  Suppose further $n$ starts an integral loop of length $N$.  Let $B \in \{0,1\}^{**}$ such that $B = (\chi_\gamma^i(n))_{i\geq 0}$.  Let $\rho$ and $\nu$ be the count of the number of zeros and ones in a single period of $B$.  Let $[n]$ be the associated integral loop vector of $n$.  Let $\mathcal{M}_n$ be constructed as by Equation (\ref{m_n}).  Let $p$ be a prime factor of $[n]$ of multiplicity $a>0$, then $\det(\mathcal{M}_n) \equiv 0 \pmod p$.}
\end{theorem}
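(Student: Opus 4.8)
The plan is to combine the two earlier results about $[n]$ and $\mathcal{M}_n$, namely Proposition (Equation~(\ref{FFF})) which states $[n] = \frac{\gamma}{2^\rho - 3^\nu}\mathcal{M}_n \overrightarrow{\mathfrak{3}}^\nu$, together with the valuation bookkeeping of Proposition~\ref{delta_2}, which gives $a - b + c - d = 0$. First I would rewrite Equation~(\ref{FFF}) as $(2^\rho - 3^\nu)[n] = \gamma\, \mathcal{M}_n \overrightarrow{\mathfrak{3}}^\nu$, clearing the denominator so that everything lives in $\mathbb{Z}^\nu$. Taking $p$-adic valuations componentwise, the left side has valuation at least $a + c$ in every component (since $p^a \mid [n]$ and $p^c \mid (2^\rho - 3^\nu)$), while the right side has valuation exactly $b + d$ in its \emph{minimal} component (by the definitions of $b$ and $d$ as the maximal exponents dividing $\gamma$ and $\mathcal{M}_n\overrightarrow{\mathfrak{3}}^\nu$ respectively).

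Next, the relation $a - b + c - d = 0$ rearranges to $a + c = b + d$, which confirms consistency of these valuations and lets me isolate the quantity of interest. The hypothesis $c > b - a$, i.e. $a + c > b$, is the crucial strengthening: it forces $d = a + c - b > 0$, so $p$ divides every component of $\mathcal{M}_n \overrightarrow{\mathfrak{3}}^\nu$. The remaining task is to pass from ``$p$ divides every entry of the vector $\mathcal{M}_n \overrightarrow{\mathfrak{3}}^\nu$'' to the conclusion ``$p \mid \det(\mathcal{M}_n)$.''

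The cleanest way to close this is to work over the field $\mathbb{F}_p = \mathbb{Z}/p\mathbb{Z}$. Reducing the matrix $\mathcal{M}_n$ and the vector $\overrightarrow{\mathfrak{3}}^\nu$ modulo $p$, the conclusion $d > 0$ says precisely that $\mathcal{M}_n \overrightarrow{\mathfrak{3}}^\nu \equiv \mathbf{0} \pmod p$. Since $\overrightarrow{\mathfrak{3}}^\nu = \sum_{j=1}^\nu 3^{\nu-j} e_j$ has last entry $3^0 = 1$, which is a unit mod $p$ (as $p \mid [n]$ and the loop terms are coprime to nothing forcing $p = 3$ to be excluded — one checks $p \neq 3$ since $3 \nmid 2^\rho - 3^\nu$ when $\nu \geq 1$ would need separate handling, but in any case $\overrightarrow{\mathfrak{3}}^\nu \not\equiv \mathbf{0}$), the reduced vector $\overline{\overrightarrow{\mathfrak{3}}^\nu}$ is nonzero in $\mathbb{F}_p^\nu$. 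Therefore $\overline{\mathcal{M}_n}$ has a nontrivial kernel over $\mathbb{F}_p$, which means $\overline{\mathcal{M}_n}$ is singular, i.e. $\det(\overline{\mathcal{M}_n}) = \overline{\det(\mathcal{M}_n)} = 0$, giving $\det(\mathcal{M}_n) \equiv 0 \pmod p$ as desired.

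I expect the main obstacle to be the edge case handling in the final step, specifically confirming that $\overrightarrow{\mathfrak{3}}^\nu$ does not itself vanish mod $p$ — equivalently, that $p \neq 3$ (or that the relevant entry survives reduction). One would verify this from the setup: since $p$ is a prime factor of $2^\rho - 3^\nu$ dividing $[n]$ with $a > 0$, and since $2^\rho - 3^\nu \equiv 2^\rho \not\equiv 0 \pmod 3$, we cannot have $p = 3$, so the constant entry $3^0 = 1$ of $\overrightarrow{\mathfrak{3}}^\nu$ remains a unit and the reduced vector is genuinely nonzero. With that confirmed, the kernel argument is immediate and the theorem follows.
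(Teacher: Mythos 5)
Your proposal is correct and follows essentially the same route as the paper: both clear the denominator in $[n] = \frac{\gamma}{2^\rho-3^\nu}\mathcal{M}_n\overrightarrow{\mathfrak{3}}^\nu$, use the valuation identity $a+c=b+d$ of Proposition \ref{delta_2} together with the hypothesis $c>b-a$ to conclude $\mathcal{M}_n\overrightarrow{\mathfrak{3}}^\nu \equiv \overrightarrow{0} \pmod p$, and finish by noting that $\overrightarrow{\mathfrak{3}}^\nu$ is nonzero modulo $p$, so $\mathcal{M}_n$ has a nontrivial kernel over $\mathbb{F}_p$ and hence $\det(\mathcal{M}_n) \equiv 0 \pmod p$. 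Your digression about excluding $p=3$ is unnecessary --- the last entry $3^0=1$ of $\overrightarrow{\mathfrak{3}}^\nu$ is a unit modulo every prime --- though this observation is in fact a slightly sharper justification of the nonvanishing mod $p$ than the paper's remark that the vector is merely nonzero over $\mathbb{Z}$.
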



\begin{example}\label{ex1}
Consider the integral loop induced by $n=65$ and $\gamma = 65455$.  We find that $\overrightarrow{\mathfrak{3}}^4$ is in the nullspace of $\mathcal{M}_{65} \pmod {5, 13}$.

\begin{equation*}
    \begin{bmatrix} 
    1 & 2 & 4 & 8 \\
    1 & 2 & 4 & 32768 \\
    1 & 2 & 16384 & 32768 \\
    1 & 8192 & 16384 & 32768
    \end{bmatrix}
    \begin{bmatrix}
        27 \\ 9 \\ 3 \\ 1
    \end{bmatrix}
    =
    \begin{bmatrix}
        65 \\ 32825 \\ 81965 \\ 155675
    \end{bmatrix}
    \equiv
    \begin{bmatrix}
        0 \\ 0 \\ 0 \\ 0
    \end{bmatrix} \pmod {5 \text{ and } 13}.
\end{equation*}

\noindent Now consider the following integral loops, which we denote by $A$, $B$, $C$, and $D$:

\begin{tabular}{c|c|c|l}
   & $n$ & $\gamma$ \\
   \hline
  $A$ & 65 & 65455 & \begin{tabular}{l}$\{65, 65650, 32825, 163930, 81965, 311350, 155675,$ \\ $532480, 266240, 133120, 66560, 33280, 16640, 8320, $ \\ $ 4160, 2080, 1040, 520, 260, 130\}$\end{tabular} \\
  $B$ & 13 & 13091 & \begin{tabular}{l}$\{13, 13130, 6565, 32786, 16393, 62270, 31135, 106496,$ \\ $53248, 26624, 13312, 6656, 3328, 1664, 832, 416,$ \\ $208, 104, 52, 26\}$\end{tabular} \\
  $C$ & 5  &  5035 & \begin{tabular}{l}$\{5, 5050, 2525, 12610, 6305, 23950, 11975, 40960, 20480,$ \\ $10240, 5120, 2560, 1280, 640, 320, 160, 80, 40, 20, 10\}$\end{tabular} \\
  $D$ & 1  &  1007 & \begin{tabular}{l} $\{1, 1010, 505, 2522, 1261, 4790, 2395, 8192, 4096, 2048,$ \\$1024, 512, 256, 128, 64, 32, 16, 8, 4, 2\}$ \end{tabular}
\end{tabular}

\noindent We then find that the integral loop $B = A/5$, $C =A/13$, and $D = A/65 = B/13 = C/5$.
\end{example}

\begin{remark}
A discussion of this example is presented in chapter three, section two.
\end{remark}


\begin{example}
    Consider the integral loop induced by $n=7$ and $\gamma = 7$.  We find that $\overrightarrow{\mathfrak{3}}^2$ is in the nullspace of $\mathcal{M}_7 \pmod 7$.

\begin{equation*}
\mathcal{M}_{7}\overrightarrow{\mathfrak{3}}^2 = 
    \begin{bmatrix}
        1 & 4 \\
        1 & 4
    \end{bmatrix} \begin{bmatrix} 3 \\ 1 \end{bmatrix} = \begin{bmatrix} 7 \\ 7 \end{bmatrix} \equiv \begin{bmatrix} 0 \\ 0 \end{bmatrix} \pmod 7 .
\end{equation*}

\noindent Consider the integral loop induced by $n=37$ and $\gamma = 37$.  We find that $\overrightarrow{\mathfrak{3}}^3$ is in the nullspace of $\mathcal{M}_{37} \pmod {37}$.

\begin{equation*}
\mathcal{M}_{37}\overrightarrow{\mathfrak{3}}^3 = 
    \begin{bmatrix}
        1 & 4 & 16 \\
        1 & 4 & 16 \\
        1 & 4 & 16
    \end{bmatrix} \begin{bmatrix}9 \\ 3 \\ 1 \end{bmatrix} = \begin{bmatrix} 37 \\ 37 \\ 37 \end{bmatrix} \equiv \begin{bmatrix} 0 \\ 0 \\ 0 \end{bmatrix} \pmod {37} .
\end{equation*}

\begin{tabular}{c|c|c|l}
     & n & $\gamma$ & \\
     \hline
    A & 7 & 7 & $\{7,28,14\}$ \\
    B & 37 & 37 & $\{37, 148, 74\}$\\
    C & 1 & 1 & $\{1,4,2\}$
\end{tabular}

\noindent We then find that the integral loop $C = B/37 = A/7$.
\end{example}

Fix $\gamma, n \in \mathbb{Z}_{>0}^{odd}$ and $N\in \mathbb{Z}_{>0}$.  Suppose further $n$ starts an integral loop of length $N$.  Let $B = \{0,1\}^{**}$ such that $B = (\chi_\gamma^i(n))_{\geq 0}$.  Let $\rho$ and $\nu$ be the count of the number of zeros and ones in a single period of $B$.  Let the quantities $m_j$ be given by Proposition \ref{AAA}.  Let $z_1,...,z_n$ be defined by,

    \begin{equation}\label{z_deff}
        z_i = 2^{\frac{1}{\nu}\sum_{k=1}^{\nu-1}k m_{|i+k-2|_\nu+1}}.
    \end{equation}

    \noindent Where $|i+k-2|_\nu$ is $i+k-2$ modulo $\nu$.  Then we define the circulant matrix $\mathcal{R}_n$ associated to $n$ as,

    \begin{equation}\label{r_nn}
        \mathcal{R}_n = \mathcal{R}(z_1,...,z_n) = \sum_{j=1}^\nu z_j \mathcal{P}^{j-1}.
    \end{equation}

\begin{proposition}
    \textit{Fix $\gamma, n \in \mathbb{Z}_{>0}^{odd}$ and $N\in \mathbb{Z}_{>0}$.  Suppose further $n$ starts an integral loop of length $N$.  Let $B = \{0,1\}^{**}$ such that $B = (\chi_\gamma^i(n))_{\geq 0}$.  Let $\rho$ and $\nu$ be the count of the number of zeros and ones in a single period of $B$.  Let $\mathcal{M}_n$ be constructed as by Equation (\ref{BBB}).  Let $\mathcal{R}_n$ be constructed by Equation (\ref{r_nn}).  Then the following equality is satisfied,}

    \begin{equation*}
        \left|\det(\mathcal{M}_n)\right| = \left|\det(\mathcal{R}_n)\right|.
    \end{equation*}
\end{proposition}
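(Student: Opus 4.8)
The plan is to exploit the weighted-shift structure of $\mathcal{D}_n = \mathcal{L}_n\mathcal{P}$ and to reduce both determinants to that of a single ordinary circulant built from $\mathbf{w}=S^{-1}\mathbf{1}$ for a suitable diagonal matrix $S$ (here $\mathbf{1}$ is the all-ones vector). First I would record the column description of $\mathcal{M}_n$: since $\sum_{j=1}^\nu e_{j,k}$ is the matrix $\mathbf{1}\,e_k^{\,T}$, Equation (\ref{BBB}) says the $k$-th column of $\mathcal{M}_n$ is $\mathcal{D}_n^{\,k-1}\mathbf{1}$, so $\mathcal{M}_n$ is the Krylov matrix $[\mathbf{1}\mid \mathcal{D}_n\mathbf{1}\mid\cdots\mid\mathcal{D}_n^{\,\nu-1}\mathbf{1}]$. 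Writing $\sigma(j)=|j|_\nu+1$ for the cyclic successor, $\mathcal{D}_n$ acts by $(\mathcal{D}_n x)_j = 2^{m_j}x_{\sigma(j)}$, and because $\sigma$ is a single $\nu$-cycle with $\sum_{j=1}^\nu m_j=\rho$ one gets the consistency relation $\mathcal{D}_n^\nu = 2^\rho I$.

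Second, I would diagonalize the shift explicitly. I seek $S=\mathrm{diag}(2^{t_1},\dots,2^{t_\nu})$ solving the telescoping recursion $t_{\sigma(j)}-t_j = \frac{\rho}{\nu}-m_j$, which is consistent precisely because the increments sum to $\rho-\rho=0$ around the cycle, and which yields the exact identity $\mathcal{D}_n = S\big(2^{\rho/\nu}\mathcal{P}\big)S^{-1}$. Substituting into the Krylov form and pulling the scalar $2^{(k-1)\rho/\nu}$ out of each column gives
\begin{equation*}
\mathcal{M}_n = S\,G\,\Delta,\qquad G=[\mathbf{w}\mid \mathcal{P}\mathbf{w}\mid\cdots\mid\mathcal{P}^{\nu-1}\mathbf{w}],\qquad \Delta=\mathrm{diag}\big(2^{(k-1)\rho/\nu}\big)_{k=1}^\nu,
\end{equation*}
with $w_j = 2^{-t_j}$. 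Taking determinants, $\det\mathcal{M}_n = 2^{\sum_j t_j}\,2^{\rho(\nu-1)/2}\,\det G$. The matrix $G$ has $(i,k)$ entry $w_{\sigma^{k-1}(i)}$, hence is a retrocirculant with symbol $\mathbf{w}$; reversing its rows is a permutation that turns it into the ordinary circulant $\mathcal{R}(\mathbf{w})$, so $|\det G| = |\det\mathcal{R}(\mathbf{w})|$.

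Third — and this is the crux — I would connect the defining exponents of the $z_i$ in Equation (\ref{z_deff}) to the conjugating diagonal. Substituting $m_{\sigma^l(i)} = t_{\sigma^l(i)}-t_{\sigma^{l+1}(i)}+\rho/\nu$ into the weighted sum $\tfrac1\nu\sum_{k=1}^{\nu-1}k\,m_{\sigma^{k-1}(i)}$ and applying Abel summation collapses the telescoping part to $\sum_j t_j - \nu\,t_{\sigma^{-1}(i)}$, leaving
\begin{equation*}
z_i = K\,w_{\sigma^{-1}(i)},\qquad K = 2^{\frac1\nu\sum_j t_j+\frac{\rho(\nu-1)}{2\nu}}.
\end{equation*}
Thus the symbol $\mathbf{z}$ is a single cyclic shift of $\mathbf{w}$ scaled by $K$, so $\mathcal{R}_n = K\,\mathcal{P}\,\mathcal{R}(\mathbf{w})$ and $\det\mathcal{R}_n = \pm K^\nu\det\mathcal{R}(\mathbf{w})$ with $K^\nu = 2^{\sum_j t_j + \rho(\nu-1)/2}$. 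Comparing with the expression for $\det\mathcal{M}_n$ above, the two scalar prefactors coincide, whence $|\det\mathcal{M}_n| = |\det\mathcal{R}_n|$.

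The hard part will be the third step: verifying the Abel-summation identity $z_i = K\,w_{\sigma^{-1}(i)}$, which is exactly what forces the particular weighted-average exponent in the definition of $z_i$ to match the conjugating factors $2^{t_j}$. The remaining ingredients — the identity $\mathcal{D}_n = S(2^{\rho/\nu}\mathcal{P})S^{-1}$, the retrocirculant-to-circulant row reversal, and the bookkeeping of the diagonal and permutation determinants — are routine once the $t_j$ are in hand, and I would sanity-check the exponent arithmetic on a small case such as the $\nu=4$ loop of $n=85$ to guard against sign or index slips.
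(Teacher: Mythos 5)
Your proposal is correct, and it reaches the conclusion of Proposition \ref{dets_r_m} by a genuinely different route than the paper. The paper argues combinatorially: Lemma \ref{a_b} sets up a term-by-term bijection between the monomials in the Leibniz expansions of $\det(\mathcal{R}(a_0,\dots,a_{n-1}))$ and $\det(\mathcal{M}(b_0,\dots,b_{n-1}))$, pivoting on Theorem \ref{combo}'s characterization of the surviving exponent vectors by $\sum_j y_j = \nu$ and $\sum_j j\,y_j \equiv 0 \pmod{\nu}$ together with an explicit invertible linear map between exponent vectors; the proposition's proof then notes that all entries are powers of $2$ and matches exponents against Equation (\ref{z_def}). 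You instead work structurally: the diagonal similarity $\mathcal{D}_n = S\,(2^{\rho/\nu}\mathcal{P})\,S^{-1}$ with $t_{\sigma(j)} - t_j = \rho/\nu - m_j$ (consistent because the increments sum to zero around the cycle), the Krylov factorization $\mathcal{M}_n = S\,G\,\Delta$ (the columns $\mathcal{D}_n^{k-1}\mathbf{1}$ are exactly the paper's Equation (\ref{m_n}), as the example $\mathcal{M}_{85}$ confirms), the row-reversal identification $\left|\det G\right| = \left|\det \mathcal{R}(\mathbf{w})\right|$, and the telescoping identity $z_i = K\,w_{\sigma^{-1}(i)}$ — which checks out: substituting $m_{\sigma^{k-1}(i)} = \rho/\nu + t_{\sigma^{k-1}(i)} - t_{\sigma^{k}(i)}$ into Equation (\ref{z_def}) and summing by parts collapses the exponent of $z_i$ to $\frac{\rho(\nu-1)}{2\nu} + \frac{1}{\nu}\sum_j t_j - t_{\sigma^{-1}(i)}$, so that $K^\nu = 2^{\sum_j t_j + \rho(\nu-1)/2} = \det S \cdot \det \Delta$ exactly as you assert, and the normalization freedom $t_j \mapsto t_j + c$ cancels on both sides. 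Your route buys three things the paper's does not: an exact signed identity $\det\mathcal{M}_n = \pm \det\mathcal{R}_n$ with explicit factor bookkeeping; validity over $\mathbb{R}$ with no integrality assumption on the $z_i$ (consistent with the paper's own $\gamma = 101$ example, where $\mathcal{R}_{19}$ has entries such as $2^{11/3}$); and a conceptual explanation of where the weighted-average exponents in Equation (\ref{z_def}) come from — they are precisely the conjugating weights $2^{t_j}$. The paper's combinatorial route, by contrast, connects the surviving monomials to the counting function $F(\nu)$ of Theorem \ref{combo}, but rests on the considerably more delicate monomial correspondence of Lemma \ref{a_b}.
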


\begin{theorem}\label{central_theorem_XX}
    \textit{Fix $\gamma, n \in \mathbb{Z}_{>0}^{odd}$ and $N\in \mathbb{Z}_{>0}$.  Suppose further $n$ starts an integral loop of length $N$.  Let $B = \{0,1\}^{**}$ such that $B = (\chi_\gamma^i(n))_{\geq 0}$.  Let $\rho$ and $\nu$ be the count of the number of zeros and ones in a single period of $B$.  Assume $\nu$ is prime.  Let the quantities $m_j$ from Proposition \ref{AAA} satisfy Equation (\ref{z_deff}) so that $z_1,...,z_\nu$ are integers.  Let $\mathcal{R}_n$ be constructed by Equation (\ref{r_nn}). Let $p$ be a prime factor of $2^\rho - 3^\nu$ distinct from $\nu$ so that the residue class $p \pmod \nu$ generates the whole group $\mathbb{Z}_\nu^{*}$.  Then $p\nmid \det(\mathcal{M}_n)$ if and only if $p\nmid(z_1+...+z_\nu)$ and for some $1\leq i<j\leq \nu$ we have $z_i \not\equiv z_j \pmod p$.}
\end{theorem}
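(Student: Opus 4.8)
\section*{Proof proposal}

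The plan is to pass from $\mathcal{M}_n$ to the circulant $\mathcal{R}_n$ using the preceding proposition, and then to read off the divisibility of $\det(\mathcal{R}_n)$ by $p$ from the factorization of its representing polynomial modulo $p$. Since the preceding proposition gives $\left|\det(\mathcal{M}_n)\right| = \left|\det(\mathcal{R}_n)\right|$, it suffices to decide when $p \mid \det(\mathcal{R}_n)$. I would attach to $\mathcal{R}_n$ the representing polynomial $f(x) = \sum_{j=1}^\nu z_j x^{j-1}$ and invoke the classical diagonalization of a circulant: the eigenvalues of $\mathcal{P}$ are the $\nu$-th roots of unity $\omega^k$, so the eigenvalues of $\mathcal{R}_n = \sum_{j=1}^\nu z_j \mathcal{P}^{j-1}$ are the values $f(\omega^k)$, whence
\[
\det(\mathcal{R}_n) = \prod_{k=0}^{\nu-1} f(\omega^k),
\]
where $\omega$ is a primitive $\nu$-th root of unity. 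Equivalently, $\det(\mathcal{R}_n)$ equals, up to sign, the resultant $\mathrm{Res}(x^\nu - 1,\, f(x))$, so that $p \mid \det(\mathcal{R}_n)$ precisely when $f(x)$ and $x^\nu - 1$ share a nontrivial common factor in $\mathbb{F}_p[x]$.

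Next I would factor $x^\nu - 1$ over $\mathbb{F}_p$. Because $\nu$ is prime and $p \neq \nu$, we have $p \nmid \nu$, so $x^\nu - 1$ is separable and splits as $(x-1)\Phi_\nu(x)$, where $\Phi_\nu(x) = 1 + x + \cdots + x^{\nu-1}$ is the $\nu$-th cyclotomic polynomial. The hypothesis that $p \bmod \nu$ generates $\mathbb{Z}_\nu^{*}$ says exactly that the multiplicative order of $p$ modulo $\nu$ equals $\nu - 1 = \phi(\nu)$; by the standard description of how cyclotomic polynomials factor over finite fields, $\Phi_\nu$ is therefore irreducible over $\mathbb{F}_p$. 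Consequently the only monic factors of $x^\nu - 1$ in $\mathbb{F}_p[x]$ are $1$, $x-1$, $\Phi_\nu$, and $x^\nu - 1$ itself, and the factors $x-1$ and $\Phi_\nu$ are coprime by separability.

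With this factorization in hand, $f$ and $x^\nu - 1$ have a common factor modulo $p$ if and only if $(x-1) \mid f$ or $\Phi_\nu \mid f$ in $\mathbb{F}_p[x]$. The first is equivalent to $f(1) = z_1 + \cdots + z_\nu \equiv 0 \pmod p$. For the second, since $\Phi_\nu$ is irreducible of degree $\nu - 1$ and $\deg f \le \nu - 1$, divisibility $\Phi_\nu \mid f$ forces $f \equiv c\,\Phi_\nu \pmod p$ for a constant $c$; as every coefficient of $\Phi_\nu$ equals $1$, this says precisely that $z_1 \equiv z_2 \equiv \cdots \equiv z_\nu \pmod p$. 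Hence $p \mid \det(\mathcal{R}_n)$ if and only if $p \mid (z_1 + \cdots + z_\nu)$ or all the $z_i$ are congruent modulo $p$. Negating, and using the equality of determinants, I obtain $p \nmid \det(\mathcal{M}_n)$ exactly when $p \nmid (z_1 + \cdots + z_\nu)$ and the $z_i$ are not all congruent modulo $p$, i.e.\ $z_i \not\equiv z_j \pmod p$ for some pair $i < j$, which is the asserted characterization.

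The main obstacle is the irreducibility of $\Phi_\nu$ over $\mathbb{F}_p$: the entire argument hinges on a primitive root $p$ modulo $\nu$ forcing the cyclotomic factor to remain irreducible, which collapses the list of possible common factors to just $x-1$ and $\Phi_\nu$. If $p$ failed to generate $\mathbb{Z}_\nu^{*}$, then $\Phi_\nu$ would split into several lower-degree irreducibles and the clean condition $\Phi_\nu \mid f$ would be replaced by the weaker requirement that $f$ vanish on some proper Galois orbit of $\nu$-th roots of unity, destroying the characterization. Some care is also needed to confirm that the hypothesis $p \neq \nu$ is used exactly to secure separability of $x^\nu - 1$, so that $x-1$ and $\Phi_\nu$ are coprime and the resultant detects each common factor independently.
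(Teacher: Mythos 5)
Your proposal is correct, and it follows the same overall architecture as the paper: pass from $\mathcal{M}_n$ to the circulant $\mathcal{R}_n$ via the equality $\left|\det(\mathcal{M}_n)\right| = \left|\det(\mathcal{R}_n)\right|$ (the paper's Proposition on the two determinants), then characterize $p \mid \det(\mathcal{R}_n)$ through the representing polynomial $f$ and its common factors with $w^\nu - 1$ modulo $p$. The difference is in where the work is done. The paper's proof of the theorem is two lines, deferring entirely to a cited circulant lemma of Sburlati (its Lemma on primes generating $\mathbb{Z}_\nu^{*}$), and the paper's own sketch of that lemma establishes the easy direction and the gcd criterion $\gcd(f(w), 1+w+\cdots+w^{\nu-1}) \neq c$ but never actually invokes the primitive-root hypothesis -- the step that closes the argument is left to the citation. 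You supply exactly that missing step: since $\mathrm{ord}_\nu(p) = \nu - 1 = \phi(\nu)$, the cyclotomic factor $\Phi_\nu(x) = 1 + x + \cdots + x^{\nu-1}$ is irreducible over $\mathbb{F}_p$, so by separability ($p \neq \nu$) the only nontrivial common factors of $f$ and $x^\nu - 1$ are $x-1$ (equivalent to $p \mid z_1 + \cdots + z_\nu$) and $\Phi_\nu$ itself, which by the degree bound $\deg f \leq \nu - 1$ forces $f \equiv c\,\Phi_\nu \pmod p$, i.e.\ $z_1 \equiv \cdots \equiv z_\nu \pmod p$ (including the degenerate case $c = 0$). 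Your version is therefore self-contained where the paper outsources, and it makes transparent precisely which hypothesis does which job: $p \neq \nu$ buys separability, and the primitive-root condition buys irreducibility of $\Phi_\nu$, collapsing the factor lattice to the two clean conditions in the statement.
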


\begin{example}
    Consider the integral loop induced by $n=373$ and $\gamma = 1048333$.  In this case $m_1 = 4, m_2 = 2, m_3 = 2, m_4 = 4$ and $m_5 = 14$ such that $2^{20}-3^5 = 1048333 = 11\times 13\times 7331$.  We find that $z_1 = 8, z_2 = 8192, z_3 = 2048, z_4 = 256$ and $z_5 = 32$.  Thus $\mathcal{R}_{373}$ has all integer coefficients.  The prime factors 11 and 7331 are both in the residue class of 1, and cannot be used. However, the residue class of 13 is 3, which indeed generates the whole group $\mathbb{Z}_5^{*}$.  We find that $8+8192+2048+256+32 = 10536 \equiv 6 \pmod {13}$.  Furthermore $32 \equiv 6 \pmod {13}$ and $8 \equiv 8 \pmod {13}$.  Thus by the theorem we conclude that $\det(\mathcal{M}_{373})$ is nonsingular modulo 13.  Furthermore, $\det(\mathcal{M}_{373}) = 36861972294903300096 = \det(\mathcal{R}_{373}) \equiv 6 {\pmod {13}}$.  Finally, $\det(\mathcal{M}_{373})\not\equiv 0 {\pmod {2^{20}-3^5}}$.
\end{example}

\section{A Note Regarding Chapters Two Through Four}
The remainder of this thesis is organized into three chapters.  Chapter two presents the background material and essential notation for the remainder of the thesis.  Chapter three presents the matrix notation and various properties used to express integral loop vectors.  Chapter four is a muesli of ideas related to chapter three.


\chapter{Fundamentals of Bounded Trajectories}

\section{Preliminary Definitions}

\begin{definition}\label{C}\normalfont
Fix $\gamma \in \mathbb{Z}_{>0}^{odd}$.  Let $C_\gamma: \mathbb{Z}_{>0} \rightarrow \mathbb{Z}_{>0}$ be defined as,

  \begin{equation}
    C_\gamma(n) =
    \begin{cases}
      3n+\gamma & \text{if } n \equiv 1 \pmod 2, \\
     \frac{n}{2}        & \text{if } n \equiv 0 \pmod 2.
    \end{cases}
  \end{equation}
  
Let $C_\gamma^0(n) = n$.  Furthermore, for $k\geq 0$, denote function composition as $C_\gamma^{k+1}(n) = C_\gamma^k \circ C_\gamma(n)$.  Finally, if $\gamma = 1$ then we write $C$ instead of $C_1$. 

\end{definition}

\begin{definition}\label{D}\normalfont

Fix $\gamma \in \mathbb{Z}_{>0}^{odd}$.  Let $\chi_\gamma: \mathbb{Z}_{>0} \rightarrow \{0,1\}$ be defined as,

  \begin{equation}
    \chi_\gamma(n) =
    \begin{cases}
      1 & \text{if } C_\gamma(n) \equiv 1 \pmod 2, \\
      0        & \text{if } C_\gamma(n) \equiv 0 \pmod 2.
    \end{cases}
  \end{equation}
  
Let $\chi_\gamma^0(n) \equiv n \pmod 2$.  Furthermore, for $k\geq 0$, denote function composition as $\chi_\gamma^{k+1}(n) = \chi_\gamma \circ C_\gamma^{k+1}(n)$.  Finally, if $\gamma = 1$ then we write $\chi$ instead of $\chi_1$. 
  
\end{definition}

\begin{definition}\label{A} \normalfont 

Fix $\gamma \in \mathbb{Z}_{>0}^{odd}$ and $n \in \mathbb{Z}_{>0}$.  Let $\{ C_\gamma^0(n),C_\gamma^1(n),...\}$ denote the \textit{trajectory} of $n$.
\end{definition}

\begin{definition}\label{B} \normalfont 
Fix $\gamma \in \mathbb{Z}_{>0}^{odd}$ and $n \in \mathbb{Z}_{>0}$.  Let $\{ \chi_\gamma^0(n),\chi_\gamma^1(n),...\}$ denote the \textit{characteristic trajectory} of $n$.
\end{definition}

\begin{example}\normalfont

The first four terms of the trajectory and characteristic trajectory induced by $n=5$ and $\gamma = 1$ appears below respectively,

\begin{equation*} \{5,16,8,4\} \text{ and } \{1,0,0,0\} . \end{equation*}

\end{example}

\noindent Let $\{0,1\}^{**}$ denote the set of binary sequences of infinite length.

\begin{definition}\normalfont

Fix $\gamma \in \mathbb{Z}_{>0}^{odd}$ and $n \in \mathbb{Z}_{>0}$.  We say $B \in \{0,1\}^{**}$ is \textit{associated} to the characteristic trajectory of $n$ if $B = (\chi_\gamma^i(n))_{i\geq0}$.
\end{definition}

\begin{proposition}\label{tt_Cg} \textit{Fix $\gamma \in \mathbb{Z}_{>0}^{odd}$, $n \in \mathbb{Z}_{>0}$, and $N \in \mathbb{Z}_{>0}$.  Let $B\in \{0,1\}^{**}$ such that $B = (\chi_\gamma^i(n))_{i\geq 0}$.  Let $\rho$ and $\nu$ be the count of the number of zeros and ones, respectively in $B$ for all $i$ between zero and $N-1$ inclusive.
\begin{enumerate}
\item If n is odd, let $m_1,...,m_{\nu - 1}$ be the number of zeros between the $(j-1)$th and $j$th one.  Let $m_0 = 0$ and $m_\nu$ be the remaining zeros after the last one.
\item If n is even, let $m_0$ be the number of leading zeros.  Observe that $n = 2^{m_0} \tilde{n}$ for $\tilde{n}$ being odd.  So we obtain $m_1,...,m_{\nu-1}$ by (1) and $\tilde{n}$.
\end{enumerate}
Then $n$ satisfies the following equality}

\begin{equation}\label{tilde_n}
C_\gamma^N(n) = \frac{3^\nu n + \gamma2^{m_0}(3^{\nu-1} + \sum_{r=1}^{\nu-1} 3^{\nu -1 -r} 2^{\sum_{j=1}^r m_j})}{2^\rho}.
\end{equation}
\end{proposition}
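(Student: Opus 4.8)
The plan is to iterate $C_\gamma$ directly, exploiting the fact that the characteristic trajectory $B$ records exactly which branch of $C_\gamma$ is applied at each step. The crucial structural observation is that, because $\gamma$ is odd, whenever $C_\gamma^i(n)$ is odd the next term $C_\gamma^{i+1}(n) = 3C_\gamma^i(n) + \gamma$ is a sum of two odd numbers, hence even; thus every $1$ in $B$ is immediately followed by at least one $0$, so the $m_j$ are well defined with $m_j \geq 1$ for $1 \le j \le \nu$. I would first treat the odd case. Writing $a_j := C_\gamma^{i_j}(n)$ for the value of the trajectory at its $j$th odd entry (so $a_1 = n$), the parity pattern ``one $1$ followed by $m_j$ zeros'' collapses each block of $1 + m_j$ steps into the single map $a_{j+1} = (3a_j + \gamma)/2^{m_j}$, an honest identity in $\mathbb{Z}_{>0}$ precisely because the intervening terms are even.

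Having reduced the $N$-fold iteration to the first-order affine recurrence $a_{j+1} = (3a_j + \gamma)/2^{m_j}$, I would solve it in closed form by induction on the block index. Setting $S_r = \sum_{j=1}^r m_j$ (so $S_0 = 0$ and $S_\nu = \rho$), the claim to prove by induction is
\[
a_{k+1} = \frac{3^k n + \gamma \sum_{r=0}^{k-1} 3^{\,k-1-r} 2^{S_r}}{2^{S_k}}, \qquad 1 \le k \le \nu .
\]
The base case $k=1$ is immediate, and the inductive step follows by substituting the expression for $a_{k+1}$ into $a_{k+2} = (3a_{k+1} + \gamma)/2^{m_{k+1}}$: the extra $+\gamma$ supplies exactly the missing $r=k$ term $\gamma\, 3^0 2^{S_k}$ of the sum, after which multiplying the denominator by $2^{m_{k+1}}$ advances $S_k$ to $S_{k+1}$. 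Evaluating at $k=\nu$ and recalling $C_\gamma^N(n) = a_{\nu+1}$, $S_\nu = \rho$, and $m_0 = 0$ yields the stated formula, with the $r=0$ term split off as $3^{\nu-1}$.

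For the even case I would reduce to the odd case rather than redo the computation. If $n$ has $m_0$ leading zeros then $C_\gamma^{m_0}(n) = n/2^{m_0} =: \tilde n$ is odd and $C_\gamma^N(n) = C_\gamma^{N-m_0}(\tilde n)$; the remaining trajectory has the same $\nu$ ones and $\rho - m_0 = \sum_{j=1}^\nu m_j$ zeros, so the odd-case formula applies to $\tilde n$. Substituting $\tilde n = n/2^{m_0}$ and clearing the factor $2^{m_0}$ from numerator and denominator turns $3^\nu \tilde n$ into $3^\nu n / 2^{m_0}$ and leaves the constant term as $\gamma(3^{\nu-1} + \cdots)$, reproducing the factor $\gamma 2^{m_0}$ and the denominator $2^\rho$ exactly as written.

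I expect the main obstacle to be the bookkeeping that rigorously justifies the block decomposition, rather than the algebra: one must argue from the definition of $B$ and the $m_j$ that the odd entries occur at indices $i_j = (j-1) + S_{j-1}$, that exactly $m_j$ halvings separate $a_j$ from $a_{j+1}$, and that $N = \nu + \rho$, all while keeping the $m_j$-indexing consistent with the convention in the statement. Once this dictionary between the combinatorics of $B$ and the arithmetic recurrence is pinned down, the inductive solution and the even-to-odd reduction are routine.
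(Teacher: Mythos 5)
Your proposal is correct and follows essentially the same route as the paper's proof: both collapse each ``one odd step followed by $m_j$ halvings'' block into the affine recurrence $a_{j+1} = (3a_j+\gamma)/2^{m_j}$ and unwind it inductively to the closed form (your explicit formula with $S_r = \sum_{j=1}^r m_j$ is just a tidier packaging of the paper's step-by-step expansion), and both handle the even case identically by stripping the $m_0$ leading halvings to reach $\tilde{n} = n/2^{m_0}$, applying the odd case with $\tilde{\rho} = \rho - m_0$, and clearing the factor $2^{m_0}$. If anything, your version is slightly more careful with the index bookkeeping ($i_j = (j-1)+S_{j-1}$, $N = \nu+\rho$) than the paper, which conflates $C_\gamma^{N-1}$ and $C_\gamma^{N}$ in its write-up.
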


\begin{proof}
Let us assume first that $n$ is odd, which implies that $m_0 = 0$.  Therefore Equation (\ref{tilde_n}) reduces to the form,

\begin{equation*}
C_\gamma^{N-1}(n) = \frac{3^\nu n + \gamma(3^{\nu-1} + \sum_{r=1}^{\nu-1} 3^{\nu -1 -r} 2^{\sum_{j=1}^r m_j})}{2^\rho}.
\end{equation*}

By construction of $\rho$ and $\nu$, we make the substitution $N-1 = \rho + \nu$.  We know that $C_\gamma^{N-1}(n)$ represents $N-1$ compositions of the $C_\gamma$ mapping with starting integer $n$.  Therefore, we can advance the trajectory of $n$ by one, and subtract one step from $N-1$ in the following manner,

\begin{equation*}
C_\gamma^{N-1}(n) = C_\gamma^{\rho + \nu}(n) = C_\gamma^{\rho+\nu-1} \circ C_\gamma(n) =  C_\gamma^{\rho+\nu-1}(3n+\gamma).
\end{equation*}

\noindent Both the terms $3n$ and $\gamma$ are odd.  Therefore their sum is even.  We can divide by two, $m_1$ number of times yielding the following expression:

\begin{equation*}
C_\gamma^{N-1}(n) = C_\gamma^{\rho-m_1+\nu-1} \circ C^{m_1}_\gamma(3n+\gamma) =  C_\gamma^{\rho-m_1+\nu-1}(\frac{3n+\gamma}{2^{m_1}}).
\end{equation*}

\noindent By hypothesis, this expression is again odd.  Therefore we can apply the $C_\gamma$ map for one more step.

\begin{equation*}
C_\gamma^{N-1}(n) = C_\gamma^{\rho-m_1+\nu-2} \circ C_\gamma(\frac{3n+\gamma}{2^{m_1}}) =  C_\gamma^{\rho-m_1+\nu-2}(\frac{3^2n+3\gamma+2^{m_1}\gamma}{2^{m_1}}).
\end{equation*}

\noindent In turn, this expression is again even.  Therefore we can divide by $m_2$ powers of two.  Following this, we can reduce the expression again,

\begin{equation*}
C_\gamma^{N-1}(n) = C_\gamma^{\rho-m_1-m_2+\nu-2} \circ C^{m_2}_\gamma(\frac{3n+\gamma}{2^{m_1}}) =  C_\gamma^{\rho-m_1-m_2+\nu-2}(\frac{3^2n+3\gamma+2^{m_1}\gamma}{2^{m_1+m_2}}).
\end{equation*}

\noindent We proceed to follow this process inductively until the length of the entire finite sub-sequence of $B$ is exhausted.

\begin{equation*}
\begin{aligned}
C_\gamma^{N-1}(n) = C_\gamma^{\rho-m_1-...-m_{\nu}+\nu-1-...-1} &\circ C^{m_{\nu-1}}_\gamma(\frac{3^{\nu}n+3^{\nu-1}\gamma+3^{\nu-2}2^{m_1}\gamma +3^{\nu-3}2^{m_1+m_2}\gamma +...}{2^{m_1+m_2+...}}),   \\
&=C_\gamma^{0}(\frac{3^{\nu}n+3^{\nu-1}\gamma+3^{\nu-2}2^{m_1}\gamma +3^{\nu-3}2^{m_1+m_2}\gamma +...}{2^{\rho}}), \\
&= \frac{3^\nu n + \gamma2^{m_0}(3^{\nu-1} + \sum_{r=1}^{\nu-1} 3^{\nu -1 -r} 2^{\sum_{j=1}^r m_j})}{2^\rho}.
\end{aligned}
\end{equation*}

On the other hand, suppose that $n$ is even.  Then $m_0 > 0$, and by hypothesis $n = 2^{m_0}\tilde{n}$ for an odd integer $\tilde{n}$.  Starting from Equation (\ref{tilde_n}) we find by taking $m_0$ steps,

\begin{equation*}
C_\gamma^{N-1}(n) = C_\gamma^{\rho + \nu}(n) = C_\gamma^{\rho -m_0+\nu} \circ C^{m_0}_\gamma(n) =  C_\gamma^{\rho-m_0+\nu}(\frac{n}{2^{m_0}}).
\end{equation*}

\noindent Recall by hypothesis $n = 2^{m_0}\tilde{n}$ implies $\frac{n}{2^{m_0}} = \tilde{n}$.  Substituting this quantity in yields:

\begin{equation*}
C_\gamma^{N-1}(n) =  C_\gamma^{\rho-m_0+\nu}(\frac{n}{2^{m_0}}) = C_\gamma^{\rho-m_0+\nu}(\tilde{n}).
\end{equation*}

\noindent We then consider the previous argument starting with $\tilde{n}$, $\tilde{\rho} = \rho - m_0$, and $\nu$ unchanged.

\begin{equation*}
\begin{aligned}
C_\gamma^{N-1-m_0}(\tilde{n}) &= \frac{3^\nu \tilde{n} + \gamma(3^{\nu-1} + \sum_{r=1}^{\nu-1} 3^{\nu -1 -r} 2^{\sum_{j=1}^r m_j})}{2^{\tilde{\rho}}}, \\
&= \frac{3^\nu \frac{n}{2^{m_0}} + \gamma(3^{\nu-1} + \sum_{r=1}^{\nu-1} 3^{\nu -1 -r} 2^{\sum_{j=1}^r m_j})}{2^{\rho - m_0}}, \\
&=\Big(\frac{2^{m_0}}{2^{m_0}} \Big) \frac{3^\nu \frac{n}{2^{m_0}} + \gamma(3^{\nu-1} + \sum_{r=1}^{\nu-1} 3^{\nu -1 -r} 2^{\sum_{j=1}^r m_j})}{2^{\rho - m_0}}, \\
&=\frac{3^\nu n + \gamma2^{m_0}(3^{\nu-1} + \sum_{r=1}^{\nu-1} 3^{\nu -1 -r} 2^{\sum_{j=1}^r m_j})}{2^{\rho}}. \\
\end{aligned}
\end{equation*}

\noindent Therefore we find that,

\begin{equation*}
C^{N-1}_\gamma(n) = C^{N-1-m_0}_\gamma (\tilde{n}) =\frac{3^\nu n + \gamma2^{m_0}(3^{\nu-1} + \sum_{r=1}^{\nu-1} 3^{\nu -1 -r} 2^{\sum_{j=1}^r m_j})}{2^{\rho}}.
\end{equation*}

\end{proof}

\begin{example}\normalfont
Consider the first nine terms of the characteristic trajectory induced by $n=19$ an $\gamma = 1$ below,

\begin{equation*}\{1,0,1,0,0,0,1,0,1\}.\end{equation*}

We verify Equation (\ref{tilde_n}) for $C^9(19)$.

\begin{equation*} 
\begin{aligned}
C^9(19) 	&= \frac{3^4 \times 19 + 2^0 (3^3 + 3^2 \times 2^1 + 3^1 \times 2^{1+3} + 3^0 \times 2^{1+3+1})}{2^5}, \\
		&= \frac{1539 + 27 + 18 + 48 + 32}{32},\\
		&= \frac{1664}{32} = 52.
\end{aligned}
\end{equation*}

\end{example}

\begin{definition}\label{E}\normalfont
Fix $\gamma \in \mathbb{Z}_{>0}^{odd}$ and $n \in \mathbb{Z}_{>0}$.  If there exists a $N \in \mathbb{Z}_{>0}$ so that $C_\gamma^N(n) = n$ then we say $n$ starts an \textit{integral loop}.
\end{definition}

\begin{definition}\label{F}\normalfont
Fix $\gamma \in \mathbb{Z}_{>0}^{odd}$ and $n \in \mathbb{Z}_{>0}$.  Suppose $n$ starts an integral loop.  Let $N = \min (\{ i \in \mathbb{Z}_{>0} | C_\gamma^i (n) = n \} )$ be the least positive integer so that $C_\gamma^N(n) = n$.  Then we say $n$ starts an integral loop of \textit{length} $N$.
\end{definition}

\begin{example}\normalfont

The integral loop induced by $n=23$ and $\gamma = 37$ appears below,

\begin{equation*} \{23,106,53,196,98,49,184,92,46\}. \end{equation*}

\end{example}

Suppose $n$ starts an integral loop of length $N$.  Then the binary sequence $B$ associated to $n$ has a repeating period of length $N$.  Let $\rho$ be the number of zeros in a single period of $B$.  Let $\nu$ be the number of ones in a single period of $B$.  Let $m_j$ denote the number of zeros between the $(j-1)$th and the $j$th one.  Let $m_\nu$ representing the remaining zeros after the last one.  Note that $\sum_j m_j = \rho$.

\begin{proposition}\label{integral_loop_formula} \textit{
Fix $\gamma \in \mathbb{Z}_{>0}^{odd}$ and $n \in \mathbb{Z}_{>0}$.  Let $B\in \{0,1\}^{**}$ such that $B = (\chi_\gamma^i(n))_{i\geq 0}$.  Suppose that $N$ is the smallest positive
integer such that $C_\gamma^{N+1}(n) = n$.  Let $\rho$ and $\nu$ be the count of the number of zeros and ones, respectively, in $B$ for all $i$ between $0$ and $N$ inclusive.
\begin{enumerate}
\item If n is odd, let $m_1,...,m_{v-1}$ be the number of zeros between the $(j-1)$th and $j$th one.  Let $m_0 = 0$ and $m_\nu$ be the remaining zeros after the last one.
\item If n is even, let $m_0$ be the number of leading zeros.  Observe that $n = 2^{m_0}\tilde{n}$ for $\tilde{n}$ being odd.  So we obtain $m_1,...,m_{\nu-1}$ by (1) and $\tilde{n}$.
\end{enumerate}
Then $n$ satisfies the following equality}

\begin{equation} \label{integral_loop_formula_equ}
n = \frac{\gamma2^{m_0}(3^{\nu-1}+\sum_{r=1}^{\nu-1} 3^{\nu-1-r}2^{\sum_{j=1}^r m_j})}{2^\rho - 3^\nu}.
\end{equation}
\end{proposition}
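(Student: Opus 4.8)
The plan is to read off this formula as an immediate corollary of Proposition \ref{tt_Cg}, exactly as the remark following Proposition \ref{AAA} anticipates. Proposition \ref{tt_Cg} already evaluates the $C_\gamma$ map advanced through one complete block of the characteristic sequence $B$: with the very same data $\rho$, $\nu$, and $(m_j)$, its numerator is identical to the numerator of Equation (\ref{integral_loop_formula_equ}), and it asserts
\[
C_\gamma^{\rho+\nu}(n) = \frac{3^\nu n + \gamma 2^{m_0}\left(3^{\nu-1} + \sum_{r=1}^{\nu-1} 3^{\nu-1-r} 2^{\sum_{j=1}^r m_j}\right)}{2^\rho}.
\]
The only new ingredient is the integral-loop hypothesis, which says that running the map through one full period returns $n$ to itself.

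First I would reconcile the indexing between the two statements: a single period of $B$ contains exactly $\rho$ halving steps and $\nu$ tripling steps, so the full period consists of $\rho+\nu$ applications of $C_\gamma$, and the loop condition $C_\gamma^{N+1}(n)=n$ is precisely the statement $C_\gamma^{\rho+\nu}(n)=n$. Substituting this equality into the left-hand side of the displayed identity replaces $C_\gamma^{\rho+\nu}(n)$ by $n$. I would then clear the denominator, writing $2^\rho n = 3^\nu n + \gamma 2^{m_0}(\cdots)$, collect the terms in $n$ to get $(2^\rho - 3^\nu)\,n = \gamma 2^{m_0}(\cdots)$, and divide through by $2^\rho - 3^\nu$ to obtain Equation (\ref{integral_loop_formula_equ}). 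The even case $m_0>0$ requires no separate argument, since Proposition \ref{tt_Cg} has already absorbed the leading zeros into the factor $2^{m_0}$ by reducing to the odd integer $\tilde n$.

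The one step that genuinely requires care -- and the only real obstacle -- is justifying the division, i.e.\ verifying that $2^\rho - 3^\nu \neq 0$. Because $n$ lies on a bounded, nontrivial trajectory, its loop must pass through at least one odd value, forcing $\nu \geq 1$; then $3^\nu$ is an odd integer exceeding $1$ while $2^\rho$ is a power of two, so unique factorization precludes $2^\rho = 3^\nu$. This guarantees the denominator is nonzero and that the resulting expression for $n$ is well defined, completing the deduction.
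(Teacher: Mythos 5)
Your proposal is correct and takes essentially the same route as the paper, whose entire proof is the one-line observation that Equation (\ref{integral_loop_formula_equ}) follows from Equation (\ref{tilde_n}) by setting $C_\gamma^N(n) = n$ and solving for $n$. Your extra step verifying $2^\rho - 3^\nu \neq 0$ (via $\nu \geq 1$ and parity) fills in a division the paper leaves implicit, but it does not alter the approach.
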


\begin{proof}
Equation (\ref{integral_loop_formula_equ}) can be derived from Equation (\ref{tilde_n}) by setting $C_\gamma^N(n) = n$.
\end{proof}

\noindent We will refer to Equation (\ref{integral_loop_formula_equ}) as the \textit{Integral Loop Formula}.

\begin{example}\normalfont

Consider again the integral loop induced by $n=23$ and $\gamma = 37$.  With respect to Equation (\ref{integral_loop_formula_equ}) we find that,

\begin{equation*}
\begin{aligned}
n &= \frac{37(3^{3-1}+3^{3-2}\times 2^1 + 3^{3-3}\times 2^{1+2})}{2^6 - 3^3}, \\
& = \frac{37(9+3\times 2 + 1\times 8)}{37} = 23.
\end{aligned}
\end{equation*}
\end{example}

\begin{definition}
\normalfont
Fix $\gamma \in \mathbb{Z}_{>0}^{odd}$.  The trajectory of an integer $n \in \mathbb{Z}_{>0}$ is said to be \textit{bounded} if there exists some $\Omega \in \mathbb{Z}_{>0}$ such that $\sup\limits_{i>0}( \{C^i_\gamma(n)\}) <\Omega$.
\end{definition}
 
\noindent With these precursory definitions established, we will proceed to study the properties of bounded trajectories.


\section{ The Integral Loop Vector}
 
\begin{definition}\label{index_seq}\normalfont
Fix $\gamma \in \mathbb{Z}_{>0}^{odd}$, $n \in \mathbb{Z}_{>0}^{odd}$, and $N \in \mathbb{Z}_{>0}$.  Suppose further that $n$ starts an integral loop of length $N$.  Let $B \in \{0,1\}^{**}$ such that $B = (\chi_\gamma^i(n))_{i \geq 0}$.  Let $\rho$ and $\nu$ be the count of the number of zeros and ones in a single period of $B$.  Let $i_j$ be the subindexing of $i$ so that for all $0 \leq j \leq \nu-1$ we have $\chi_\gamma^{i_j}(n) \equiv 1 \pmod 2$.  Note also $i_0 = 0$.  We call $(i_j)_{j=0}^{\nu-1}$ the \textit{indexing sequence} associated to $n$.
 \end{definition}
 
 \begin{definition}\label{2B} \normalfont
Fix $\gamma \in \mathbb{Z}_{>0}^{odd}$, $n \in \mathbb{Z}_{>0}^{odd}$, and $N \in \mathbb{Z}_{>0}$.  Suppose further that $n$ starts an integral loop of length $N$.  Let $B \in \{0,1\}^{**}$ such that $B = (\chi_\gamma^i(n))_{i\geq 0}$.  Let $\rho$ and $\nu$ be the count of the number of zeros and ones in a single period of $B$.  Let $(i_j)_{j=0}^{\nu-1}$ be the indexing sequence associated to $n$.  Let $e_j$ denote the standard basis of $\mathbb{R}^\nu$.  We call $[n]$ the \textit{integral loop vector} expressed in Equation (\ref{int_loop_vec}):
 
\begin{equation}\label{int_loop_vec}
[n] = \begin{bmatrix}C_\gamma^{i_0}(n) \\ C_\gamma^{i_1}(n) \\ \vdots \\ C_\gamma^{i_{\nu-1}}(n) \end{bmatrix} = \sum_{j=1}^\nu C_\gamma^{i_{j-1}}(n) e_{j} .
\end{equation}
\end{definition}

\begin{example}\normalfont
Consider the integral loop induced by $n=421$ and $\gamma = 13$.  With respect to Equation (\ref{int_loop_vec}), we write the integral loop vector [421] as follows:

\begin{equation*}
[421] = \begin{bmatrix}421 \\ 319\\ 485 \\ 367 \\ 557 \end{bmatrix}.
\end{equation*}

\end{example}

\begin{proposition}\label{telescope} \textit{Fix $\gamma \in \mathbb{Z}_{>0}^{odd}$, $n \in \mathbb{Z}_{>0}^{odd}$, and $N \in \mathbb{Z}_{>0}$.  Suppose further that $n$ starts an integral loop of length $N$.  Let $B \in \{0,1\}^{**}$ such that $B = (\chi_\gamma^i(n))_{i\geq 0}$.  Let $\rho$ and $\nu$ be the count of the number of zeros and ones in a single period of $B$.  Let $(i_j)_{j=0}^{\nu-1}$ be the index sequence associated to $n$.  Then the quantities of $m_j$ from Proposition \ref{integral_loop_formula} and $i_j$ from Definition \ref{index_seq} can be related by the following equations:}

  \begin{equation}\label{a_s}
    \begin{cases}
      m_j &= i_j-i_{j-1}-1, \text{ if } 1 \leq j \leq \nu-1, \\
      m_\nu        &= N-1-i_{\nu-1}.
    \end{cases}
  \end{equation}
\end{proposition}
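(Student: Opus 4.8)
The plan is to prove both identities by translating the definition of the quantities $m_j$ — which count consecutive blocks of zeros in the binary word $B$ — directly into statements about the positions $i_j$ at which the ones occur. Since $n$ is odd, Definition \ref{D} gives $\chi_\gamma^0(n) \equiv n \equiv 1 \pmod 2$, so the very first entry of $B$ is a one; this shows $i_0 = 0$ and is consistent with the convention $m_0 = 0$ carried over from Proposition \ref{integral_loop_formula}. The index sequence $(i_j)_{j=0}^{\nu-1}$ of Definition \ref{index_seq} lists, in strictly increasing order, exactly the positions $i \in \{0,1,\dots,N-1\}$ of one full period at which $\chi_\gamma^i(n) = 1$, and by construction every position of the period not appearing in this list carries a zero.

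First I would handle the interior blocks for $1 \leq j \leq \nu-1$. By construction the $(j-1)$th and $j$th ones sit at positions $i_{j-1}$ and $i_j$, and every intervening position is a zero. The positions strictly between them are $i_{j-1}+1,\, i_{j-1}+2,\, \dots,\, i_j - 1$, of which there are exactly $i_j - i_{j-1} - 1$. Since $m_j$ is defined to be precisely the number of zeros in this block, this gives $m_j = i_j - i_{j-1} - 1$. Next I would treat the trailing block $m_\nu$: the last one of the period occupies position $i_{\nu-1}$, and a single period of $B$ runs over positions $0$ through $N-1$, so the zeros after the final one occupy $i_{\nu-1}+1,\, \dots,\, N-1$, whence $m_\nu = (N-1) - i_{\nu-1}$, as claimed.

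As a consistency check — and the likely source of the label \emph{telescope} — I would note that summing the interior identities collapses the $i_j$: using $i_0 = 0$ we get $\sum_{j=1}^{\nu-1}(i_j - i_{j-1} - 1) = i_{\nu-1} - (\nu-1)$, and adding $m_\nu$ recovers $\sum_{j=1}^{\nu} m_j = N - \nu = \rho$, in agreement with $\rho + \nu = N$ and the earlier remark that $\sum_j m_j = \rho$. This telescoping confirms no zeros are double-counted or omitted across the blocks.

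The argument is purely combinatorial bookkeeping, so there is no deep obstacle; the only genuine care required is to keep the zero-based indexing of the ones straight (in particular $i_0 = 0$, forced by the oddness of $n$) and to delimit the final block by the period length $N$ rather than by a hypothetical further one. Handling these index conventions correctly is the one place where an off-by-one slip could occur, and the telescoping identity above serves as the safeguard against it.
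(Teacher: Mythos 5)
Your proposal is correct and follows essentially the same route as the paper: the interior identities $m_j = i_j - i_{j-1} - 1$ are read off directly from the construction, and the telescoping sum $\sum_{j=1}^{\nu} m_j = N - i_0 - \nu = \rho$ ties everything to the period length. The only (immaterial) difference is one of emphasis: you establish $m_\nu = N-1-i_{\nu-1}$ by directly counting the trailing positions $i_{\nu-1}+1,\dots,N-1$ and use the telescope as a consistency check, whereas the paper uses the telescoping identity itself as the verification of that last term.
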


\begin{proof}
For $1\leq j \leq \nu-1$ the statement follows directly from construction.  That leaves only the last term to check.  We observe that by taking the sum of all terms, we have a telescoping sum: 

\begin{equation*} 
\begin{aligned}
m_1 +...+m_{\nu-1}+m_{\nu} &= i_1 - i_0 - 1 + i_2 - i_1 - 1 + ... + i_{\nu-1} - i_{\nu-2}-1 + N - i_{\nu-1}-1, \\
&= N - i_0 - \nu = \rho+\nu - 0 -\nu = \rho.
\end{aligned}
\end{equation*}

\noindent Thus we see the last term necessarily counts the remaining number of even numbers before the period repeats.
\end{proof}
 
Observe that for $j\geq 1$, we have $[n]_j = C_\gamma^{i_{j-1}}(n)$.  This highlights the relationship between the subindex $j$ and the indexing sequence $(i_j)$.  This naturally extends to an infinite dimensional array.  However, in the case of integral loops, we consider only the first period, which has dimension $\nu$.
 
 \begin{definition}\label{similar_v}\normalfont
Fix $\gamma, n, \tilde{n} \in \mathbb{Z}_{>0}^{odd}$, and $N\in\mathbb{Z}_{>0}$.  Suppose both $n$ and $\tilde{n}$ start integral loops of length $N$.  Let $B, \tilde{B} \in \{0,1\}^{**}$ be the binary sequences associated to the characteristic trajectories of $n$ and $\tilde{n}$, respectfully.  Let $\rho, \tilde{\rho}, \nu \text{ and } \tilde{\nu}$ be the count for the number of zeros and ones in a single period of $B$ and $\tilde{B}$, respectfully.  Suppose further that $\nu = \tilde{\nu}$.  Let $[n]$ and $[\tilde{n}]$ be the associated integral loop vectors.  We state the following:
\begin{enumerate}
\item If for all $j$, $[n]_j = [\tilde{n}]_j$ we say that $[n]$ and $[\tilde{n}]$ are \textit{equal} and write $[n] = [\tilde{n}]$.
\item If there exists a $0<k\leq\nu-1$, so that for all $j$, $[n]_{j+k} = [\tilde{n}]_j$, we say that $[n]$ and $[\tilde{n}]$ are \textit{similar} and write $[n] \approx [\tilde{n}]$.
\item If integral loop vectors $[n]$ and $[\tilde{n}]$ are neither equal or similar, then we say they are \textit{distinct}.
\end{enumerate}
\end{definition}

\begin{example}\normalfont

Consider the integral loop induced by $n = 133$ and $\gamma = 943$.  The integral loop vector associated to $133$ appears as:

\begin{equation*}
[133] = \begin{bmatrix} 133 \\ 671 \\ 739 \\ 395 \end{bmatrix}.
\end{equation*}

\noindent Furthermore, we find that $[133] \approx [671] \approx [739] \approx [395]$.  All other integral loop vectors are distinct.

\end{example}


\chapter{Matrix Constructions Associated to Integral Loops}

 \section{Construction of Matrix $\mathcal{D}_n$}
 
\noindent Let $e_{j,k}$ ($1\leq j,k \leq \nu$) denote the standard basis of $\mathbb{R}^{\nu\times\nu}$.
 
 \begin{definition}\normalfont
 
Fix $\nu \in \mathbb{Z}_{>0}$.  Let $\mathcal{P}$ be the $\nu \times \nu$ permutation matrix characterized by the following formula:
 
 \begin{equation}\label{cal_p}
 \mathcal{P} = \sum_{j=1}^\nu e_{j,|j|_\nu+1},
 \end{equation}
 
\noindent where $|j|_\nu$ denotes $j$ modulo $\nu$.
 
 \end{definition}
 
\begin{example}\normalfont
Let $\nu = 5$.
 
\begin{equation*}
\mathcal{P} = \begin{bmatrix} 0 & 1 & 0 & 0& 0 \\ 0 &0 & 1 & 0& 0 \\ 0 & 0 & 0 & 1& 0 \\ 0 & 0 & 0 & 0& 1 \\ 1 & 0 & 0 & 0& 0 \end{bmatrix}.
\end{equation*}

\end{example}
 
\begin{definition}\label{3A}
\normalfont
Fix $\gamma \in \mathbb{Z}_{>0}^{odd}$, $n \in \mathbb{Z}_{>0}^{odd}$, and $N \in \mathbb{Z}_{>0}$.  Suppose further that $n$ starts an integral loop of length $N$.  Let $B \in \{0,1\}^{**}$ such that $B = (\chi_\gamma^i(n))_{i\geq 0}$.  Let $\rho$ and $\nu$ be the count of the number of zeros and ones in a single period of $B$.  Let the quantities $m_j$ be given by  by Proposition \ref{integral_loop_formula}.  We define the $\nu\times\nu$ matrix $\mathcal{L}_n$ associated to $n$ as follows:

\begin{equation}
\mathcal{L}_n = \sum_{j=1}^\nu 2^{m_j} e_{j,j}.
\end{equation}
\end{definition}
 
\begin{definition}\label{def_d_n}\normalfont
 
Fix $\gamma \in \mathbb{Z}_{>0}^{odd}$, $n \in \mathbb{Z}_{>0}^{odd}$, and $N \in \mathbb{Z}_{>0}$.  Suppose further that $n$ starts an integral loop of length $N$.  Let $B \in \{0,1\}^{**}$ such that $B = (\chi_\gamma^i(n))_{i\geq 0}$.  Let $\rho$ and $\nu$ be the count of the number of zeros and ones in a single period of $B$.  Let the quantities $m_j$ be given by Proposition \ref{integral_loop_formula}.  We define the $\nu\times\nu$ matrix $\mathcal{D}_n$ associated to $n$ to be the result of the matrix product of $\mathcal{L}_n$ and $\mathcal{P}$:
 
\begin{equation}\label{d_n}
\mathcal{D}_n = \mathcal{L}_n \times \mathcal{P} = \sum_{j=1}^\nu 2^{m_j}e_{j,|j|_\nu+1}.
\end{equation}

\noindent Where $|j|_\nu$ denotes $j$ modulo $\nu$.
\end{definition}

\begin{remark} \normalfont
The matrix constructed above in Definition \ref{def_d_n} is a close cousin to the \textit{circulant matrix} \cite{davisphilip}.  The distinction being that the rows of a circulant matrix, apart from a right-shifting, are identical.  In our case, we encoded the finite binary sequence associated to the integral loop into the off-diagonal elements.
\end{remark}

\begin{example}\normalfont

Consider the integral loop induced by $n = 19$ and $\gamma = 37$.  Then the matrix $\mathcal{D}_{19}$ appears in the following form:

\begin{equation*}
\mathcal{D}_{19} = \begin{bmatrix} 0 & 2 & 0 \\ 0 & 0 & 2 \\ 16 & 0 & 0\end{bmatrix}.
\end{equation*}

\end{example}

\begin{proposition}\label{exp_rule}\textit{Fix $\gamma \in \mathbb{Z}_{>0}^{odd}$, $n \in \mathbb{Z}_{>0}^{odd}$, and $N\in\mathbb{Z}_{>0}$.  Suppose further that $n$ starts an integral loop of length $N$.  Let $B \in \{0,1\}^{**}$ such that $B = (\chi_\gamma^i(n))_{i\geq 0}$.  Let $\rho$ and $\nu$ be the count of the number of zeros and ones in a single period of $B$.  Let the quantities $m_j$ be given by Proposition \ref{integral_loop_formula} and the matrix $\mathcal{D}_n$ given by Equation (\ref{d_n}).  Finally, let $k\in \mathbb{Z}_{> 0}$ be given.  Then the matrix $\mathcal{D}_n$ has the following exponent rule:}
 
 \begin{equation*}
\mathcal{D}_n^k = \sum_{j=1}^\nu \left(2^{\sum_{r=0}^{k-1} m_{|j+r-1|_\nu+1}}\right)e_{j,|j+k-1|_\nu +1}.
\end{equation*}

\noindent \textit{In particular, this implies $\mathcal{D}_n^\nu$ = $2^\rho \mathbb{I}_\nu$, where $\mathbb{I}_\nu$ is the $\nu \times \nu$ identity matrix.}
\end{proposition}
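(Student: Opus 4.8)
The plan is to prove the exponent rule by induction on $k$ and then specialize to $k=\nu$. The structural fact driving everything is that $\mathcal{D}_n = \mathcal{L}_n\mathcal{P}$ is a \emph{weighted cyclic shift}: by Equation (\ref{d_n}) its only nonzero entry in row $j$ sits in column $|j|_\nu+1$ and equals $2^{m_j}$. So each right-multiplication by $\mathcal{D}_n$ should advance the single nonzero entry of every row by one column (cyclically) while multiplying its weight by the appropriate power of two, and the formula merely records the accumulated column shift and the accumulated exponent. For the base case $k=1$, the claimed formula reads $\mathcal{D}_n^1 = \sum_{j=1}^\nu 2^{m_{|j-1|_\nu+1}} e_{j,|j|_\nu+1}$; since $1\le j\le\nu$ forces $|j-1|_\nu = j-1$, we have $m_{|j-1|_\nu+1} = m_j$, and the right-hand side collapses to exactly the definition $\sum_{j=1}^\nu 2^{m_j} e_{j,|j|_\nu+1}$ of $\mathcal{D}_n$.

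For the inductive step I would assume the formula at level $k$ and compute $\mathcal{D}_n^{k+1} = \mathcal{D}_n^k\,\mathcal{D}_n$ using the matrix-unit rule $e_{j,a}e_{l,b} = \delta_{a,l}e_{j,b}$. The row-$j$ term of $\mathcal{D}_n^k$ lives in column $a = |j+k-1|_\nu+1$, so it pairs nontrivially only with the term of $\mathcal{D}_n$ whose row index is $l=a$, namely $2^{m_a}e_{a,|a|_\nu+1}$. The new column index is then $|a|_\nu+1$, and the crux is to check that $|a|_\nu+1 = |j+k|_\nu+1$; I would verify this by splitting into the case $|j+k-1|_\nu = \nu-1$ (where $a=\nu$, $|a|_\nu = 0$, and $j+k\equiv 0$) and the case $|j+k-1|_\nu\le\nu-2$ (where $|a|_\nu = a$ directly). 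The exponent meanwhile picks up the extra summand $m_a = m_{|j+k-1|_\nu+1}$, which turns $\sum_{r=0}^{k-1} m_{|j+r-1|_\nu+1}$ into $\sum_{r=0}^{k} m_{|j+r-1|_\nu+1}$, precisely the exponent at level $k+1$. This closes the induction.

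To obtain the ``in particular'' claim I would set $k=\nu$. The column index becomes $|j+\nu-1|_\nu+1 = |j-1|_\nu+1 = j$, so every nonzero entry returns to the diagonal. For the exponent, as $r$ ranges over $0,\dots,\nu-1$ the residues $|j+r-1|_\nu$ form a complete residue system modulo $\nu$, so $|j+r-1|_\nu+1$ ranges over all of $\{1,\dots,\nu\}$; hence the exponent equals $\sum_{s=1}^\nu m_s = \rho$, using $\sum_j m_j = \rho$ together with $m_0 = 0$ for odd $n$. Therefore $\mathcal{D}_n^\nu = 2^\rho\sum_{j=1}^\nu e_{j,j} = 2^\rho\mathbb{I}_\nu$.

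The only genuine care required is the modular-index bookkeeping in the inductive step, namely confirming across the wrap-around at $\nu$ that advancing the nonzero entry of row $j$ by one more column sends column $|j+k-1|_\nu+1$ to column $|j+k|_\nu+1$; everything else is a routine collection of exponents. If I wanted to sidestep the explicit index chase entirely, I would instead write $\mathcal{D}_n^k = \mathcal{L}_n(\mathcal{P}\mathcal{L}_n\mathcal{P}^{-1})\cdots(\mathcal{P}^{k-1}\mathcal{L}_n\mathcal{P}^{-(k-1)})\,\mathcal{P}^k$ and observe that each conjugate $\mathcal{P}^r\mathcal{L}_n\mathcal{P}^{-r}$ is diagonal with cyclically shifted entries, so the leading factor is a single diagonal matrix of accumulated weights while $\mathcal{P}^k$ supplies the column shift; this makes $\mathcal{D}_n^\nu = (\prod\text{weights})\,\mathcal{P}^\nu$ with $\mathcal{P}^\nu = \mathbb{I}_\nu$ transparent, and I would use this structural version as the intuition behind the direct induction.
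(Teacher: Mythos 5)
Your proposal is correct and takes essentially the same route as the paper's own proof: induction on $k$ with base case $k=1$ reducing to Equation (\ref{d_n}), an inductive step in which multiplying by $\mathcal{D}_n = \mathcal{L}_n\mathcal{P}$ appends the exponent $m_{|j+k-1|_\nu+1}$ (diagonal factor) and advances the column index by one (permutation factor), and specialization to $k=\nu$ to obtain $\mathcal{D}_n^\nu = 2^\rho\mathbb{I}_\nu$. If anything, yours is more careful than the paper's terse argument, since you explicitly verify the wrap-around identity $|a|_\nu+1=|j+k|_\nu+1$ and justify the diagonal exponent $\rho$ via the complete-residue-system observation, both of which the paper leaves implicit.
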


\begin{proof}
Observe first that if $k= 1$ we arrive back to Equation (\ref{d_n}).  Suppose then the exponent rule holds up to some quantity $k^*$.  Then for $k^*+1$ we have:

\begin{equation*}
\begin{aligned}
\mathcal{D}_n^{k^*+1} 
&= \mathcal{D}_n^{k^*}\times \mathcal{D}_n, \\
& = \mathcal{D}_n^{k^*}\times \mathcal{L}_n \times \mathcal{P}.
\end{aligned}
\end{equation*}

\noindent Recall that $\mathcal{L}_n$ is a diagonal matrix, so the matrix product simply adds the respective $m_j$ to the end of summand in the exponents for each entry of $\mathcal{D}_n^{k^*}$.  Similarly, the matrix $\mathcal{P}$ is a permutation matrix that right-shifts all entries by one element.  In this way we arrive at the desired form.  Finally, we see that when the matrix $D_n$ is taken to the $\nu$ power, that every diagonal element is exactly $\sum_j m_j = \rho$.
\end{proof}

\begin{example}\normalfont

Consider the integral loop induced by $n = 23$ and $\gamma = 37$.  The first three powers of $\mathcal{D}_{23}$ appear as follows:

\begin{equation*}
\begin{matrix}
\mathcal{D}_{23}^1 = \begin{bmatrix} 0 & 2 & 0 \\ 0 & 0 & 4 \\ 8 & 0 & 0 \end{bmatrix}, &
\mathcal{D}_{23}^2 = \begin{bmatrix} 0 & 0 & 8 \\ 32 & 0 & 0 \\ 0 & 16 & 0 \end{bmatrix}, &
\mathcal{D}_{23}^3 = \begin{bmatrix} 64 & 0 & 0 \\ 0 & 64 & 0 \\ 0 & 0 & 64 \end{bmatrix}.
\end{matrix}
\end{equation*}

\end{example}

 \section{Construction of Matrix $\mathcal{M}_n$}


\subsection{Preliminary Notation}
 
\begin{definition}\normalfont

Fix $\nu \in \mathbb{Z}_{>0}$.  Let $\overrightarrow{\mathfrak{3}}^\nu$ be the $\nu$-column vector whose entries are the ascending powers of three, appearing as follows:
 
\begin{equation} 
\overrightarrow{\mathfrak{3}}^\nu = \sum_{j=1}^\nu 3^{\nu-j}e_j .
\end{equation}
\end{definition}

\begin{example}

Let $\nu = 5$.

\begin{equation*}
\overrightarrow{\mathfrak{3}}^5 = \begin{bmatrix} 3^4 \\ 3^3 \\ 3^2 \\3^1 \\3^0 \end{bmatrix}.
\end{equation*}

\end{example}
 
\begin{definition}\normalfont
Fix $\gamma \in \mathbb{Z}_{>0}^{odd}$, $n \in \mathbb{Z}_{>0}^{odd}$, and $N\in\mathbb{Z}_{>0}$.  Suppose further that $n$ starts an integral loop of length $N$.  Let $B \in \{0,1\}^{**}$ such that $B = (\chi_\gamma^i(n))_{i\geq 0}$.  Let $\rho$ and $\nu$ be the count of the number of zeros and ones in a single period of $B$.  Let the matrix $\mathcal{D}_n$ be given by Equation (\ref{d_n}).  We define the following $\nu \times \nu$ matrix $\mathcal{M}_n$ below:

\begin{equation}\label{m_n}
\mathcal{M}_n = \sum_{k=1}^\nu \mathcal{D}_n^{k-1}\sum_{j=1}^\nu e_{j,k}.
\end{equation}

\noindent Where $\mathcal{D}^k_n$ denotes the $k$th power of $\mathcal{D}_n$ and $\sum_{j=1}^\nu e_{j,k}$ denotes the matrix whose $k$th column is all ones and is zero everywhere else.
\end{definition}

\begin{example}\normalfont

Consider the integral loop induced by $n = 65$ and $\gamma = 943$.  Then the matrix $\mathcal{M}_{65}$ appears as follows:

\begin{equation*}
\mathcal{M}_{65} = \begin{bmatrix} 1 & 2 & 4 & 8 \\ 1 & 2& 4 & 512 \\ 1 & 2 & 256 &512 \\ 1 & 128 & 256 & 512
\end{bmatrix}.
\end{equation*}

\end{example}

\begin{proposition}\label{3v} \textit{Fix $\gamma \in \mathbb{Z}_{>0}^{odd}$, $n \in \mathbb{Z}_{>0}^{odd}$, and $N\in\mathbb{Z}_{>0}$.  Suppose further that $n$ starts an integral loop of length $N$.  Let $B \in \{0,1\}^{**}$ such that $B = (\chi_\gamma^i(n))_{i\geq 0}$.  Let $\rho$ and $\nu$ be the count of the number of zeros and ones in a single period of $B$.  Let $[n]$ be the associated integral loop vector of $n$.  Let the matrix $\mathcal{M}_n$ be given by Equation (\ref{m_n}).  Then $[n]$ and $\mathcal{M}_n$ satisfy the following equality,}

\begin{equation}\label{central_2}
[n] = \frac{\gamma}{2^\rho - 3^\nu}\mathcal{M}_n \overrightarrow{\mathfrak{3}}^\nu.
\end{equation}

\noindent \textit{Where $\mathcal{M}_n \overrightarrow{\mathfrak{3}}^\nu$ denotes the product of the $\nu \times \nu$ matrix $\mathcal{M}_n$ and the $\nu \times 1$ column vector $\overrightarrow{\mathfrak{3}}^\nu$.}

\end{proposition}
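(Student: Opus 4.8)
The plan is to avoid any brute-force entry-by-entry expansion and instead exploit the fact that the integral loop vector $[n]$ satisfies a single cyclic linear recurrence governed by $\mathcal{D}_n$. First I would record the elementary one-step relation along the loop: since each component $[n]_j = C_\gamma^{i_{j-1}}(n)$ is odd, applying $C_\gamma$ once produces the even number $3[n]_j + \gamma$, and stripping the following block of $m_j$ factors of two lands on the next odd term $[n]_{j+1}$ (indices cyclic, with $[n]_{\nu+1} = [n]_1$). This gives $2^{m_j}[n]_{j+1} = 3[n]_j + \gamma$ for every $j$. Reading the cyclic successor map as $\mathcal{P}$ and the blocks of halvings as $\mathcal{L}_n$, and using $\mathcal{D}_n = \mathcal{L}_n\mathcal{P}$ from Definition \ref{def_d_n}, these $\nu$ scalar equations assemble into the single matrix identity
\begin{equation*}
\mathcal{D}_n [n] = 3[n] + \gamma \mathbf{1},
\end{equation*}
where $\mathbf{1}$ is the all-ones column vector; equivalently $(\mathcal{D}_n - 3\mathbb{I}_\nu)[n] = \gamma\mathbf{1}$.

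Next I would invert $\mathcal{D}_n - 3\mathbb{I}_\nu$ explicitly via the geometric-series factorization
\begin{equation*}
\mathcal{D}_n^\nu - 3^\nu \mathbb{I}_\nu = (\mathcal{D}_n - 3\mathbb{I}_\nu)\sum_{k=0}^{\nu-1} 3^{\nu-1-k}\mathcal{D}_n^k .
\end{equation*}
By the final clause of Proposition \ref{exp_rule}, $\mathcal{D}_n^\nu = 2^\rho \mathbb{I}_\nu$, so the left-hand side is the scalar matrix $(2^\rho - 3^\nu)\mathbb{I}_\nu$. Since $n$ genuinely starts an integral loop the denominator $2^\rho - 3^\nu$ is nonzero, whence
\begin{equation*}
(\mathcal{D}_n - 3\mathbb{I}_\nu)^{-1} = \frac{1}{2^\rho - 3^\nu}\sum_{k=0}^{\nu-1} 3^{\nu-1-k}\mathcal{D}_n^k ,
\end{equation*}
and applying this to $\gamma\mathbf{1}$ yields $[n] = \frac{\gamma}{2^\rho-3^\nu}\sum_{k=0}^{\nu-1} 3^{\nu-1-k}\mathcal{D}_n^k \mathbf{1}$.

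Finally I would identify this sum with $\mathcal{M}_n\overrightarrow{\mathfrak{3}}^\nu$. Unwinding Equation (\ref{m_n}), the factor $\sum_{j=1}^\nu e_{j,k}$ installs the all-ones vector in column $k$, so the $k$th column of $\mathcal{M}_n$ is exactly $\mathcal{D}_n^{k-1}\mathbf{1}$, the vector of row sums of $\mathcal{D}_n^{k-1}$. Hence $\mathcal{M}_n\overrightarrow{\mathfrak{3}}^\nu = \sum_{k=1}^\nu 3^{\nu-k}\mathcal{D}_n^{k-1}\mathbf{1}$, which after the reindexing $k \mapsto k-1$ is precisely the sum obtained above. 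Combining the three steps gives the claimed equality.

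I expect the main obstacle to be the careful justification of the recurrence in the first step: one must verify that the action of $\mathcal{P}$ on column vectors realizes the cyclic successor $[n]_j \mapsto [n]_{j+1}$ in the correct orientation, that $\mathcal{L}_n$ contributes the right power $2^{m_j}$ to row $j$, and that the wrap-around equation $2^{m_\nu}[n]_1 = 3[n]_\nu + \gamma$ closes the loop consistently with $m_\nu$ being the trailing block of zeros — this is where the indexing conventions of Proposition \ref{telescope} and Definition \ref{def_d_n} must be reconciled. Once the recurrence is secured, the remaining two steps are purely formal, since $\mathcal{D}_n^\nu = 2^\rho\mathbb{I}_\nu$ and the column interpretation of $\mathcal{M}_n$ do all the remaining work.
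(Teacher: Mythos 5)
Your proof is correct, but it takes a genuinely different route from the paper's. The paper disposes of Proposition \ref{3v} in one line, reading it off as a corollary of Proposition \ref{integral_loop_formula} applied componentwise: row $j$ of $\mathcal{M}_n$ is $(1,\, 2^{m_j},\, 2^{m_j+m_{j+1}},\, \dots)$, so the $j$th entry of $\frac{\gamma}{2^\rho-3^\nu}\mathcal{M}_n \overrightarrow{\mathfrak{3}}^\nu$ is exactly the Integral Loop Formula (\ref{integral_loop_formula_equ}) evaluated at the odd element $C_\gamma^{i_{j-1}}(n)$, whose $m$-sequence is the cyclic shift $(m_j, m_{j+1}, \dots)$. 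You instead bypass the closed-form scalar formula entirely: you encode the loop dynamics in the single resolvent identity $(\mathcal{D}_n - 3\mathbb{I}_\nu)[n] = \gamma\overrightarrow{\mathfrak{1}}^\nu$ and invert via the geometric factorization together with $\mathcal{D}_n^\nu = 2^\rho\mathbb{I}_\nu$ from Proposition \ref{exp_rule}. This is in fact the mechanism the paper only develops later, in Proposition \ref{dd_n} of the final chapter, whose two displayed equalities are precisely your intermediate results --- except that there the paper derives them \emph{from} Equation (\ref{integral_loop_formula_equ}), whereas you derive them directly from the one-step relations $2^{m_j}[n]_{j+1} = 3[n]_j + \gamma$. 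What your route buys: it is self-contained at the matrix level, it makes the invertibility of $\mathcal{D}_n - 3\mathbb{I}_\nu$ explicit, and it would let you recover Proposition \ref{integral_loop_formula} (for odd $n$) as the first component of your formula rather than the other way around; what the paper's route buys is brevity, since Proposition \ref{integral_loop_formula} was already proved by unwinding the trajectory. Two small points to tighten: your justification that $2^\rho - 3^\nu \neq 0$ ``since $n$ genuinely starts an integral loop'' is better grounded in unique factorization (note $\nu \geq 1$ because $n$ is odd and $\rho \geq 1$ because $3n+\gamma$ is even, and a positive power of $2$ never equals a positive power of $3$); and your cyclic recurrence silently uses that every component of $[n]$ is odd, which is exactly the defining property of the indexing sequence in Definition \ref{index_seq} and is worth citing where you set up the relation, including the wrap-around case $2^{m_\nu}[n]_1 = 3[n]_\nu + \gamma$ that you correctly reconcile with $m_\nu = N-1-i_{\nu-1}$ from Proposition \ref{telescope}.
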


\begin{proof}
This follows directly from Proposition \ref{integral_loop_formula}.
\end{proof}

\begin{example}\normalfont

Consider the integral loop induced by $n = 23$ and $\gamma = 37$.  We verify Proposition \ref{3v} as follows:

\begin{equation*}
[23] = \begin{bmatrix} 23 \\ 53 \\ 49 \end{bmatrix} = \frac{37}{37} \begin{bmatrix} 1 & 2 & 8 \\ 1 & 4 & 32 \\ 1 & 8 & 16 \end{bmatrix} \begin{bmatrix} 9 \\ 3 \\ 1 \end{bmatrix} = \frac{37}{2^6-3^3} \begin{bmatrix} 1 & 2 & 8 \\ 1 & 4 & 32 \\ 1 & 8 & 16 \end{bmatrix} \begin{bmatrix} 9 \\ 3 \\ 1 \end{bmatrix}.
\end{equation*}

\end{example}



\subsection{Elementary Properties of Integral Loops}

\begin{lemma}\label{alpha}\textit{
    Fix $\gamma,n \in \mathbb{Z}_{>0}^{odd}$ and $N\in \mathbb{Z}_{>0}$.  Let $n$ start an integral loop of length $N$.  Let $B\in\{0,1\}^{**}$ such that $B = (\chi_\gamma^i(n))_{i\geq 0}$.  Let $\rho$ and $\nu$ be the count of the number of zeros and ones in a single period of $B$.  Let $(i_j)_{j=0}^{\nu-1}$ be the indexing sequence associated to $n$.  Let $p>2$ be a prime.  If $p$ divides two of $C_\gamma^{i_j}(n)$, $C_\gamma^{i_{j+1}}(n)$, or $\gamma$ then $p$ divides the third quantity as well.}
\end{lemma}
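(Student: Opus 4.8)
The plan is to reduce the whole statement to the single recurrence that links two consecutive odd iterates of the loop. Write $a_j := C_\gamma^{i_j}(n)$ for the $\nu$ odd entries of the loop, with indices read cyclically so that $a_\nu := a_0 = n$ (the loop closes after $N = \rho+\nu$ steps). First I would record the fundamental relation
$$
2^{\,m_{j+1}}\, a_{j+1} = 3 a_j + \gamma ,
$$
where $m_{j+1} = i_{j+1}-i_j-1$ for $0\le j\le \nu-2$ and the closing step uses $m_\nu = N-1-i_{\nu-1}$ (Proposition \ref{telescope}). This relation is exactly the elementary reduction performed in the proof of Proposition \ref{tt_Cg}: at the odd value $a_j$ one application of $C_\gamma$ produces the even number $3a_j+\gamma$, which is then halved precisely $m_{j+1}$ times before the next odd value $a_{j+1}$ is reached. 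Because $n$ is odd we have $i_0=0$, so the same relation holds across the wrap-around pair $(a_{\nu-1},a_0)$ by periodicity of the loop.

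With this recurrence the statement becomes a short modular computation. Since $p>2$, the factor $2^{\,m_{j+1}}$ is a unit modulo $p$. Reducing mod $p$ and splitting according to which two of $a_j,a_{j+1},\gamma$ are divisible by $p$: \emph{(i)} if $p\mid a_j$ and $p\mid a_{j+1}$ then $\gamma = 2^{\,m_{j+1}}a_{j+1}-3a_j\equiv 0$, so $p\mid\gamma$; \emph{(ii)} if $p\mid a_j$ and $p\mid\gamma$ then $2^{\,m_{j+1}}a_{j+1}=3a_j+\gamma\equiv 0$ and invertibility of $2$ forces $p\mid a_{j+1}$; \emph{(iii)} if $p\mid a_{j+1}$ and $p\mid\gamma$ then $3a_j = 2^{\,m_{j+1}}a_{j+1}-\gamma\equiv 0$, and for $p\neq 3$ this gives $p\mid a_j$.

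The only genuine obstacle is the prime $p=3$ in case \emph{(iii)}, where $3\mid 3a_j$ is automatic and does not by itself yield $3\mid a_j$. I would dispose of this by exploiting the cyclic closure of the loop rather than a single recurrence: if $3\mid\gamma$ then for every index $k$ the relation $2^{\,m_{k+1}}a_{k+1}=3a_k+\gamma\equiv 0\pmod 3$ together with $\gcd(2,3)=1$ forces $3\mid a_{k+1}$; propagating this around the cycle shows that $3$ divides every odd entry of the loop, and in particular $3\mid a_j$, as required. Cases \emph{(i)} and \emph{(ii)} for $p=3$ are already settled by the direct computation, since they never cancel the factor $3$. Combining the three cases for $p\neq 3$ with this supplementary cyclic argument for $p=3$ finishes the proof.
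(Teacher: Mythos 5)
Your proposal follows the same route as the paper's own proof: both isolate the single-step recurrence $2^{m_{j+1}}C_\gamma^{i_{j+1}}(n) = 3C_\gamma^{i_j}(n)+\gamma$ and split into the three cases according to which two of the quantities $p$ divides, using $p>2$ to discard the factor $2^{m_{j+1}}$. The one genuine difference is your treatment of $p=3$ in case \emph{(iii)}, and here you are \emph{more} careful than the paper. The paper's second case concludes from
\begin{equation*}
p\left(\frac{2^{m_{j+1}}C_\gamma^{i_{j+1}}(n)-\gamma}{p}\right) = 3C_\gamma^{i_j}(n)
\end{equation*}
that $C_\gamma^{i_j}(n)$ must have $p$ as a prime factor, which silently cancels the $3$ and is invalid when $p=3$ --- a value the hypothesis $p>2$ does not exclude, and one that genuinely occurs (e.g. $\gamma=3$, $n=3$ gives the loop $\{3,12,6\}$). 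Your supplementary argument closes this gap cleanly: if $3\mid\gamma$, then every odd entry $a_{k+1}$ satisfies $2^{m_{k+1}}a_{k+1} = 3a_k+\gamma \equiv 0 \pmod 3$, so $3$ divides all odd entries of the loop by cyclic propagation, in particular $a_j$ (indeed this shows the hypothesis $3\mid a_{j+1}$ was not even needed in that case). Your indexing of the wrap-around step via $m_\nu = N-1-i_{\nu-1}$ also matches Proposition \ref{telescope} correctly. In short, your proof is correct, matches the paper's strategy, and additionally repairs a small but real oversight in the paper's own argument.
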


\begin{proof}
    Suppose first that $p$ divides $C_\gamma^{i_j}(n)$ and $\gamma$.

    \begin{equation*}
        \begin{aligned}
            C \circ C_\gamma^{i_j}(n) = 3C_\gamma^{i_j}(n)+\gamma &= 2^{m_{j+1}}C_\gamma^{i_{j+1}}(n), \\
            p \left( \frac{3C_\gamma^{i_j}(n)+\gamma}{p} \right) & = 2^{m_{j+1}}C_\gamma^{i_{j+1}}(n).
        \end{aligned}
    \end{equation*}

    \noindent It is an elementary result in number theory that $C_\gamma^{i_{j+1}}(n)$ must have $p$ as a prime factor.  Now suppose $p$ divides $C_\gamma^{i_{j+1}}(n)$ and $\gamma$.

    \begin{equation*}
        \begin{aligned}
            2^{m_{j+1}}C_\gamma^{i_{j+1}}(n) &= 3C_\gamma^{i_j}(n) + \gamma, \\
            p\left(\frac{2^{m_{j+1}}C_\gamma^{i_{j+1}}(n)-\gamma}{p}\right) &= 3C_\gamma^{i_j}(n).
        \end{aligned}
    \end{equation*}

    \noindent $C_\gamma^{i_j}(n)$ must have $p$ as a prime factor.  Finally suppose that $p$ divides $C_\gamma^{i_j}(n)$ and $C_\gamma^{i_{j+1}}(n)$.

    \begin{equation*}
        \begin{aligned}
            C \circ C_\gamma^{i_j}(n) &= 3C_\gamma^{i_j}(n)+\gamma = 2^{m_{j+1}}C_\gamma^{i_{j+1}}(n), \\
            \gamma &= p \left(\frac{2^{m_{j+1}}C_\gamma^{i_{j+1}}(n) - 3C_\gamma^{i_j}(n)}{p}\right) .
        \end{aligned}
    \end{equation*}

    \noindent Thus $\gamma$ must have $p$ as a prime factor.  As we have tested all three possibilities, we conclude that if $p$ divides two of $C_\gamma^{i_j}$, $C_\gamma^{i_{j+1}}(n)$, or $\gamma$ then $p$ divides the third quantity as well.
\end{proof}

\begin{lemma}\textit{
    Fix $\gamma,n \in \mathbb{Z}_{>0}^{odd}$ and $N\in \mathbb{Z}_{>0}$.  Let $n$ start an integral loop of length $N$.  Let $B\in\{0,1\}^{**}$ such that $B = (\chi_\gamma^i(n))_{i\geq 0}$.  Let $\rho$ and $\nu$ be the count of the number of zeros and ones in a single period of $B$.  Let $(i_j)_{j=0}^{\nu-1}$ be the indexing sequence associated to $n$.  Let $p>2$ be a prime.  If $p$ divides only one of $C_\gamma^{i_j}(n)$, or $\gamma$ then $p$ cannot divide $C_\gamma^{i_{j+1}}(n)$.}
\end{lemma}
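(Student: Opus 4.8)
The plan is to derive this statement as the logical complement of Lemma \ref{alpha}, arguing by contradiction. Write $A = C_\gamma^{i_j}(n)$ and $B = C_\gamma^{i_{j+1}}(n)$, so that the defining relation between consecutive odd terms reads $3A + \gamma = 2^{m_{j+1}} B$. The hypothesis is that $p$ divides exactly one of the two quantities $A$ and $\gamma$, and the goal is to conclude $p \nmid B$.

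First I would suppose, for contradiction, that $p \mid B$. There are two cases according to which of $A$, $\gamma$ the prime $p$ divides. If $p \mid A$ and $p \nmid \gamma$, then $p$ divides two of the three quantities $A$, $B$, $\gamma$ (namely $A$ and $B$); by Lemma \ref{alpha}, $p$ must then divide the third, giving $p \mid \gamma$, a contradiction. If instead $p \mid \gamma$ and $p \nmid A$, then $p$ divides $\gamma$ and $B$, so again Lemma \ref{alpha} forces $p \mid A$, contradicting $p \nmid A$. In either case the assumption $p \mid B$ is untenable, so $p \nmid B$, which is the claim.

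I expect the only delicate point to be the case $p = 3$ with $p \mid \gamma$. A direct argument straight from the relation $3A + \gamma = 2^{m_{j+1}} B$ would, in the subcase $p \mid \gamma$ and $p \nmid A$, reduce to showing $p \mid 3A \Rightarrow p \mid A$, an inference that fails precisely when $p = 3$. Routing the argument through Lemma \ref{alpha} sidesteps this entirely, since that lemma has already absorbed the divisibility bookkeeping. Should one prefer a self-contained argument, the escape is to observe that when $3 \mid \gamma$ the relation $2^{m_{j+1}} B \equiv 3A + \gamma \equiv 0 \pmod 3$ forces $3 \mid B$ at every step of the cycle; traversing the loop once then shows $3$ divides every odd term, so the hypothesis $p \mid \gamma$, $p \nmid A$ is vacuous at $p = 3$. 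Either way the statement holds, and it pairs with Lemma \ref{alpha} to give a complete description of how a prime propagates, or fails to propagate, between consecutive odd terms of an integral loop.
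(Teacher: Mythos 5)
Your proof is correct and takes the same route as the paper, which disposes of this lemma with the single line ``This follows directly from Lemma \ref{alpha}'': you simply spell out the two contradiction cases that the paper leaves implicit. Your aside on $p=3$ is a genuine refinement---the paper's proof of Lemma \ref{alpha} silently uses $p\mid 3A \Rightarrow p\mid A$, which fails at $p=3$---and your observation that $3\mid\gamma$ forces $3$ to divide every odd term of the loop (making that subcase vacuous) is exactly the right patch.
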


\begin{proof}
    This follows directly from Lemma \ref{alpha}.
\end{proof}

\begin{example}
    \noindent Consider the integral loop induced by $n = 629$ and $\gamma = 5^4 = 625$.  This integral loop is of length $N=144$ with $\nu=43$ odd positive integers.  None of which are a multiple of five.
\end{example}

\begin{lemma}\label{beta}\textit{
    Fix $\gamma,n \in \mathbb{Z}_{>0}^{odd}$ and $N\in\mathbb{Z}_{>0}$.  Let $n$ start an integral loop of length $N$ in the $3x+\gamma$ Problem.  Let $s = \gcd(n,\gamma)$ and let $p$ be a prime factor of $s$. Then $n/p$ starts an integral loop of length $N$ in the $3x+\gamma/p$ Problem.}
\end{lemma}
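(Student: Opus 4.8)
The plan is to show that dividing both the seed $n$ and the parameter $\gamma$ by $p$ merely rescales the entire trajectory by a factor of $1/p$, so that the loop structure carries over verbatim. First I would record that since $\gamma$ is odd and $p \mid \gamma$, the prime $p$ must be odd; consequently both $n/p$ and $\gamma/p$ are again positive odd integers, so the hypotheses of the $3x+\gamma/p$ problem are satisfied and $C_{\gamma/p}$ is a legitimate map on $\mathbb{Z}_{>0}$.

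The central claim is the scaling identity
\begin{equation*}
C_{\gamma/p}^{\,k}\!\left(\tfrac{n}{p}\right) = \tfrac{1}{p}\, C_\gamma^{\,k}(n) \qquad \text{for all } k \geq 0,
\end{equation*}
which I would prove by induction on $k$. The base case $k=0$ is immediate. For the inductive step, write $a_k = C_\gamma^{\,k}(n)$ and $b_k = C_{\gamma/p}^{\,k}(n/p)$ and assume $a_k = p\,b_k$. Because $p$ is odd, $a_k$ and $b_k$ have the same parity, so the two piecewise maps select the same branch. If $a_k$ is odd, then $a_{k+1} = 3a_k + \gamma = p\bigl(3b_k + \gamma/p\bigr) = p\,b_{k+1}$; if $a_k$ is even, then $a_{k+1} = a_k/2 = p\,(b_k/2) = p\,b_{k+1}$. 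In either case $a_{k+1} = p\,b_{k+1}$, closing the induction. As a by-product this re-establishes that every trajectory term is divisible by $p$ (cf.\ Lemma \ref{alpha}).

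Two consequences follow. First, since $a_k$ and $b_k$ always share parity, the characteristic trajectories of $n$ and $n/p$ coincide, so the associated sequence $B$ together with $\rho$, $\nu$, and the $m_j$ are identical for both problems. Second, taking $k = N$ in the scaling identity and using $C_\gamma^{\,N}(n) = n$ gives $C_{\gamma/p}^{\,N}(n/p) = n/p$, so $n/p$ starts an integral loop in the $3x+\gamma/p$ problem.

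It remains to verify that the loop length is \emph{exactly} $N$ and not some proper divisor. I would argue by contradiction: if $C_{\gamma/p}^{\,N'}(n/p) = n/p$ for some $0 < N' < N$, then multiplying through by $p$ via the scaling identity yields $C_\gamma^{\,N'}(n) = p\cdot(n/p) = n$, contradicting the minimality of $N$ for the $3x+\gamma$ problem (Definition \ref{F}). Hence $n/p$ starts an integral loop of length precisely $N$. The only delicate point in the whole argument is keeping the two parity branches aligned across the two maps at every step, which is exactly why the oddness of $p$—forced by the oddness of $\gamma$—is essential; once that is secured, both the induction and the minimality argument are routine.
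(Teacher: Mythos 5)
Your proposal is correct and takes essentially the same route as the paper: the paper's proof also factors $p$ out of the entire trajectory, asserting $\{C_\gamma^k(n)\}_{k\geq 0} = p\,\{C_{\gamma/p}^k(n/p)\}_{k\geq 0}$, which is exactly your scaling identity $C_{\gamma/p}^k(n/p) = \tfrac{1}{p}\,C_\gamma^k(n)$, merely written as an informal chain of equalities rather than as an induction. Your two refinements---making the parity-branch alignment (and hence the oddness of $p$) explicit, and verifying via minimality that the loop length is exactly $N$ rather than a proper divisor---tighten steps the paper leaves implicit, but they do not constitute a different argument.
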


\begin{proof}Let $A$ be the trajectory of $n$ and $B$ be the trajectory of $n'$.
    \begin{equation*}
        \begin{aligned}
            A   &= \{C_\gamma^0(n), C_\gamma^1(n),...,C_\gamma^{N-1}(n),C_\gamma^N(n),...\}, \\
                &= \{n, 3n+\gamma,...,2n,n,...\}, \\
                &= \{p \frac{n}{p},3p \frac{n}{p}+p\frac{\gamma}{p},...,2  \frac{n}{p},p\frac{n}{p},...\}, \\
                &= p\{\frac{n}{p},3\frac{n}{p}+\frac{\gamma}{p},...,2\frac{n}{p},\frac{n}{p},...\}, \\
                &= p\{C_{\frac{\gamma}{p}}^0(\frac{n}{p}), C_{\frac{\gamma}{p}}^1(\frac{n}{p}),...,C_{\frac{\gamma}{p}}^{N-1}(\frac{n}{p}),C_{\frac{\gamma}{p}}^N(\frac{n}{p}),...\}, \\
                &= p B.
        \end{aligned}
    \end{equation*}
We find that the trajectory starting at $n/p$ has a period of length $N$, e.g. $C_{\frac{\gamma}{p}}^N(\frac{n}{p}) = \frac{n}{p}$.  Thus, by definition $n/p$ starts an integral loop of length N in the $3x+\gamma/p$ Problem.
\end{proof}

\begin{remark}
    We have shown that if the starting integer $n$ and the additive constant $\gamma$ both share a prime factor $p$, then we can divide by $p$, and create a new integral loop.
\end{remark}

\begin{lemma}\textit{
    Fix $\gamma,n \in\mathbb{Z}_{>0}^{odd}$ and $N \in\mathbb{Z}_{>0}$.  Let $n$ start an integral loop of length $N$.  Let $B\in \{0,1\}^{**}$ such that $B = (\chi_\gamma^i(n))_{i\geq 0}$.  Let $\rho$ and $\nu$ be the count of the number of zeros and ones in a single period of $B$.  Let $(i_j)_{j=0}^{\nu-1}$ be the indexing sequence associated to $n$.  Let $s = gcd(C_\gamma^{i_0}(n),...,C_\gamma^{i_\nu}(n))$, and let $s$ have prime factors $p_1,...,p_k$  Define $a_1,...,a_k \in\mathbb{Z}_{\geq 0}$ as the maximal exponents so that $p_j^{a_j}$ divides each component of $[n]$.  Then the number of integral loops that can be constructed by removing prime factors from the trajectory of $n$ is expressed by the following formula,}
\end{lemma}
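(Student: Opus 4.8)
The plan is to set up a correspondence between the divisors of $s$ and the integral loops obtainable by stripping shared prime factors, and then simply count the divisors. Guided by Example \ref{ex1}, where $s = 65 = 5\cdot 13$ produces the four loops $A,B,C,D$, I expect the target formula to be $\prod_{j=1}^{k}(a_j+1)$, the number of divisors of $s = \prod_{j=1}^{k} p_j^{a_j}$.

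First I would establish the key divisibility fact that $s \mid \gamma$. For each $j$, since $a_j$ is the largest exponent with $p_j^{a_j}$ dividing every component of $[n]$, in particular $p_j^{a_j}$ divides two consecutive odd terms $C_\gamma^{i_j}(n)$ and $C_\gamma^{i_{j+1}}(n)$, which are linked by the loop recurrence $3C_\gamma^{i_j}(n) + \gamma = 2^{m_{j+1}}C_\gamma^{i_{j+1}}(n)$ used in the proof of Lemma \ref{alpha}. Rearranging gives $\gamma = 2^{m_{j+1}}C_\gamma^{i_{j+1}}(n) - 3C_\gamma^{i_j}(n)$, which is divisible by $p_j^{a_j}$. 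Because the $p_j$ are distinct primes, this upgrades Lemma \ref{alpha} from single prime factors to full prime powers and yields $s = \prod_{j=1}^{k} p_j^{a_j} \mid \gamma$.

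Next, for any divisor $d \mid s$ I would show that $n/d$ starts an integral loop of length $N$ in the $3x + \gamma/d$ Problem. Since $d \mid s$ divides $\gamma$ and divides every component of $[n]$, the argument of Lemma \ref{beta} applies. Writing $d = \prod_{j=1}^{k} p_j^{b_j}$ with $0 \le b_j \le a_j$ and peeling off one prime at a time, an induction on the number of prime factors of $d$ counted with multiplicity shows that dividing the whole trajectory by $d$ produces a valid trajectory for $C_{\gamma/d}$ with the same characteristic trajectory $B$, hence the same length $N$, the same $\rho$ and $\nu$, and integral loop vector $[n]/d$. Finally I would check that $d \mapsto (n/d \text{ in the } 3x+\gamma/d \text{ Problem})$ is injective: distinct divisors $d_1 \ne d_2$ produce loops with distinct additive constants $\gamma/d_1 \ne \gamma/d_2$, hence distinct loops. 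Counting the divisors of $s$ then gives $\prod_{j=1}^{k}(a_j+1)$, with $d=1$ recovering the original loop and $d=s$ giving the fully reduced one.

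The main obstacle is the inductive step in the middle: I must ensure that after removing a prime $p_j$ once, $p_j$ still lies in the gcd of the new starting integer and the reduced constant $\gamma/p_j$, so that Lemma \ref{beta} can be reapplied; that is, the relevant valuations must descend in lockstep. The divisibility fact $s \mid \gamma$ proved first is precisely what prevents the iteration from stalling: because $v_{p_j}(\gamma) \ge a_j = \min_{\ell} v_{p_j}([n]_\ell)$, each of the $a_j$ successive divisions by $p_j$ keeps $p_j$ in the common gcd, so all $\prod_{j=1}^k(a_j+1)$ combinations are genuinely realizable.
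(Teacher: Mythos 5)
Your proposal is correct, and it in fact proves considerably more than the paper's own proof attempts to. The paper disposes of this lemma in a single sentence --- ``this is a very classic problem in combinatorics where one chooses $k$ objects from a set of $n$ objects with repeated elements'' --- i.e.\ it performs only the multiset count $\prod_{j=1}^{k}(1+a_j)$ (for each prime $p_j$, choose how many of its $a_j$ available copies to remove), and leaves the realizability and distinctness of those choices entirely implicit, resting on the surrounding results (Lemma \ref{alpha}, Lemma \ref{beta}, Lemma \ref{gamma}). You supply exactly the two verifications that this count silently requires. First, your prime-power upgrade of Lemma \ref{alpha}: from $\gamma = 2^{m_{j+1}}C_\gamma^{i_{j+1}}(n)-3C_\gamma^{i_j}(n)$ you get $p_j^{a_j}\mid \gamma$ for every $j$, hence $s\mid\gamma$; this sharpens Lemma \ref{gamma} (which gives only $b\geq\min(a,\tilde a)$ for a single prime) and is what guarantees that the iterated application of Lemma \ref{beta} never stalls --- your ``valuations descend in lockstep'' observation is genuinely needed, since Lemma \ref{beta} as stated strips one prime at a time from $\gcd(n,\gamma)$ and must be re-entitled to its hypothesis after each division. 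Second, injectivity of $d\mapsto$ (the loop of $n/d$ in the $3x+\gamma/d$ problem), which the paper never addresses; your argument via the distinct constants $\gamma/d$ works, and alternatively the loops are distinct even as integer sequences since their maxima $\max_i C_\gamma^i(n)/d$ differ. Two small points to make explicit if you write this up: division by the odd number $d$ preserves parity, so the characteristic trajectory $B$ and the branch taken at each step are unchanged (this is why $(3x+\gamma)/d = 3(x/d)+\gamma/d$ and $(x/2)/d=(x/d)/2$ track the $C_{\gamma/d}$ dynamics correctly); and minimality of the period is preserved, because $C_{\gamma/d}^{N'}(n/d)=n/d$ with $N'<N$ would, upon multiplying through by $d$, give $C_\gamma^{N'}(n)=n$ and contradict the minimality of $N$. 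In short, where the paper offers only the final count, your route constructs the divisor-lattice bijection that makes the count a proof.
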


\begin{equation*}
    \prod_{j=1}^k (1+a_j) .
\end{equation*}

\begin{proof}
    This is a very classic problem in combinatorics where one chooses $k$ objects from a set of $n$ objects with repeated elements.
\end{proof}

\begin{lemma}\textit{
    Fix $\gamma,n \in \mathbb{Z}_{>0}^{odd}$ and $N\in \mathbb{Z}_{>0}$.  Let $n$ start an integral loop of length $N$.  Let $B\in \{0,1\}^{**}$ such that $B = (\chi_\gamma^i(n))_{i\geq 0}$.  Let $\rho$ and $\nu$ be the count of the number of zeros and ones in a single period of $B$.  Let $(i_j)_{j=0}^{\nu-1}$ be the indexing sequence associated to $n$.  Let $s_j = \gcd(C_\gamma^{i_j}(n),C_\gamma^{i_{j+1}}(n))$ for all $0\leq j \leq \nu-1$.  Let $0\leq j'\leq \nu-1$ be fixed.  If $p$ is a prime factor of $s_{j'}$ of multiplicity $k>0$, then it is a prime factor for all $s_j$ of multiplicity $k$.}
\end{lemma}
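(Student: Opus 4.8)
The plan is to track the $p$-adic valuations of the odd terms $C_\gamma^{i_j}(n)$ around the loop and show that the valuation of each consecutive gcd $s_j$ is forced to be constant in $j$. Throughout I write $v_p(\cdot)$ for the exponent of $p$ in a positive integer, set $a_j = v_p(C_\gamma^{i_j}(n))$ and $g = v_p(\gamma)$, and exploit the recurrence $3 C_\gamma^{i_j}(n) + \gamma = 2^{m_{j+1}} C_\gamma^{i_{j+1}}(n)$ already used in the proof of Lemma \ref{alpha} (indices read cyclically, so that $C_\gamma^{i_\nu}(n) = C_\gamma^{i_0}(n) = n$). Since $p$ is odd the power $2^{m_{j+1}}$ is invisible to $v_p$, and because $v_p(s_j) = \min(a_j, a_{j+1})$, proving the claim reduces to showing that every cyclically consecutive pair $(a_j, a_{j+1})$ contains a term equal to the global minimum $a = \min_l a_l$; then $v_p(s_j) = a$ for all $j$, and in particular $k = v_p(s_{j'}) = a$.

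First I would use the hypothesis to set up the global divisibility. Since $p \mid s_{j'}$, the two consecutive terms $C_\gamma^{i_{j'}}(n)$ and $C_\gamma^{i_{j'+1}}(n)$ are both divisible by $p$, so Lemma \ref{alpha} gives $p \mid \gamma$; as $\gamma$ is odd this also records $p > 2$. Applying the propagation in Lemma \ref{alpha} repeatedly around the loop then yields $p \mid C_\gamma^{i_j}(n)$ for every $j$, so $a_j \geq 1$ and $g \geq 1$. Next I would establish $g \geq a$: if instead $g < a$, taking $v_p$ of $3 C_\gamma^{i_j}(n) + \gamma = 2^{m_{j+1}} C_\gamma^{i_{j+1}}(n)$ forces the left side to have valuation exactly $g$ (the unequal-summand case, valid for both $p = 3$ and $p \neq 3$), while the right side has valuation $a_{j+1} \geq a > g$, a contradiction. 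Having $g \geq a$, I would normalize by dividing the entire loop by $p^a$: iterating Lemma \ref{beta} is legitimate since at each stage the current starting term and the current $\gamma$ retain a factor of $p$, and it produces a genuine integral loop for the $3x + \gamma/p^a$ problem in which at least one term is now coprime to $p$. It suffices to show that in this reduced loop no consecutive gcd is divisible by $p$.

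The heart of the argument is the reduced loop, split according to whether $p \mid (\gamma/p^a)$. If $p \nmid \gamma/p^a$ (the case $g = a$), then two consecutive reduced terms both divisible by $p$ would, by Lemma \ref{alpha}, force $p \mid \gamma/p^a$, a contradiction; hence every consecutive pair contains a term coprime to $p$ and we are done. If $p \mid \gamma/p^a$ (the case $g > a$), I would instead start from a term of minimal valuation and run Lemma \ref{alpha} backwards: its predecessor cannot be divisible by $p$, for $p \mid \gamma/p^a$ together with that divisibility would force (via Lemma \ref{alpha}) the minimal term itself to be divisible by $p$. Iterating around the cycle forces every reduced term to be coprime to $p$, i.e. all $a_j = a$. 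Either way $v_p(s_j) = a$ for all $j$, giving the claim.

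The step I expect to be the genuine obstacle is the behavior at $p = 3$, where the factor of $3$ in the recurrence shifts the relevant valuation from $a_j$ to $a_j + 1$; this breaks the clean ``$\min$ of two unequal valuations'' bookkeeping whenever $a_j + 1 = g$ and threatens an upward jump in valuation. I would handle it by showing directly that $p = 3$ forces $g = a$, so that only the first, cleaner case above can occur: writing $\delta = v_p(3)$ and feeding it into the valuation of the recurrence at a minimal-valuation term, the branch $g > a$ would require a neighbor of valuation $a - 1 < a$, contradicting minimality. Once the case split is organized this way, verifying that the cancellation case $a_j + \delta = g$ never actually lowers a valuation below $a$ is routine.
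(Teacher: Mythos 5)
Your proposal is correct and takes essentially the same route as the paper's proof: both use Lemma \ref{alpha} to force $p \mid \gamma$ and propagate divisibility to every odd term of the loop, then iterate Lemma \ref{beta} to strip the common power of $p$, and finally show that in the reduced loop no consecutive pair of terms can both remain divisible by $p$. Your explicit $p$-adic valuation bookkeeping --- the step $v_p(\gamma) \geq \min_j v_p(C_\gamma^{i_j}(n))$, the case split on whether $p$ divides the reduced $\gamma$, and the check at $p=3$ (which your two cases in fact only ever invoke through the directions of Lemma \ref{alpha} that remain valid there) --- is a more careful rendition of the same argument the paper states loosely via its closing contradiction, not a different one.
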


\begin{proof}
    Recall from Lemma \ref{alpha} that if $C_\gamma^{i_{j'}}(n)$ and $C_\gamma^{i_{j'+1}}(n)$ both have a prime factor $p$, then $\gamma$ must also have the same prime factor.  Again as a consequence of Lemma \ref{alpha} we conclude that for all $0\leq j \leq \nu-1$, $C_\gamma^{i_j}(n)$ has prime factor $p$.  By Lemma \ref{beta} we may divide the integral loop starting with $C_\gamma^{i_{j'}}(n)$ by $p$, yielding a new integral loop.  If the multiplicity of $p$ is one, then we are done.  Else, we reapply Lemma \ref{alpha} to show $\gamma$ has another factor of $p$.  Again by Lemma \ref{alpha} we find that for all $0\leq j \leq \nu-1$, $C_\gamma^{i_j}(n)$ has another copy of $p$.  We then apply Lemma \ref{beta} to divide the integral loop by another copy of $p$.  We proceed inductively until all $k$ copies of $p$ have been exhausted.

    Suppose there exists another $0\leq j'' \leq \nu-1$ such that $s_{j''}$ has prime factor $p$ but with multiplicity $k'' > k$.  By the preceding argument we would have been able to divide $C_\gamma^{i_{j'}}(n)$ and $C_\gamma^{i_{j'+1}}(n)$ by another factor of $p$.  However, we had already assumed that $s_{j'}$ had only $k$ copies of $p$.  This is a contradiction.  As $s_{j'}$ was arbitrary, we conclude that if $p$ is a prime factor of $s_{j'}$ of multiplicity $k>0$ then it is a prime factor for all $s_j$ of multiplicity $k$.
\end{proof}

\begin{lemma}\label{gamma}\textit{
    Fix $\gamma,n \in \mathbb{Z}_{>0}^{odd}$ and $N\in\mathbb{Z}_{>0}$.  Let $n$ start an integral loop of length $N$.  Let $s= \gcd(n,C_\gamma^1(n),\gamma)$ and $p$ a prime factor of $s$.  Define $a,\tilde{a},b \in \mathbb{Z}_{\geq 0}$ as the maximal exponents such that $p^a$ divides $n$, $p^{\tilde{a}}$ divides $C_\gamma^1(n)$, and $p^b$ divides $\gamma$.  Then $b \geq \min(a,\tilde{a})$.}
\end{lemma}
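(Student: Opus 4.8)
The plan is to reduce the claim to the standard non-archimedean (ultrametric) inequality for the $p$-adic valuation. Write $v_p(x)$ for the exponent of $p$ in the prime factorization of a nonzero integer $x$, so that by hypothesis $v_p(n) = a$, $v_p(C_\gamma^1(n)) = \tilde a$, and $v_p(\gamma) = b$ (each of which is at least $1$, since $p \mid s$). Because $n$ is odd, a single application of the map gives $C_\gamma^1(n) = 3n + \gamma$, and hence the key identity
\begin{equation*}
\gamma = C_\gamma^1(n) - 3n,
\end{equation*}
which exhibits $\gamma$ as a difference of two integers whose valuations we can control.

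Next I would apply $v_p(x - y) \geq \min(v_p(x), v_p(y))$ to this identity to obtain
\begin{equation*}
b = v_p(\gamma) \geq \min\bigl(v_p(C_\gamma^1(n)),\, v_p(3n)\bigr) = \min\bigl(\tilde a,\, v_p(3n)\bigr),
\end{equation*}
so everything reduces to bounding $v_p(3n)$ from below by $a$. Since $n$ and $\gamma$ are odd, every prime factor of $s = \gcd(n, C_\gamma^1(n), \gamma)$ is odd, and in particular $p \neq 2$.

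The only point requiring care is the factor of $3$: if $p \neq 3$ then $v_p(3n) = v_p(n) = a$, while if $p = 3$ then $v_p(3n) = 1 + a$; in both cases $v_p(3n) \geq a$. Using monotonicity of $\min$ in each argument, this gives
\begin{equation*}
b \geq \min\bigl(\tilde a,\, v_p(3n)\bigr) \geq \min(\tilde a, a) = \min(a, \tilde a),
\end{equation*}
as claimed. I expect the $p = 3$ case to be the only subtlety, and it is harmless, since absorbing a factor of $3$ only raises $v_p(3n)$ and therefore cannot weaken the bound. Note that the integral-loop hypothesis is not actually needed beyond the standing assumption that $n$ is odd, which is precisely what makes $C_\gamma^1(n) = 3n + \gamma$ and hence drives the whole argument.
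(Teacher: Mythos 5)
Your proof is correct, but it takes a genuinely different route from the paper's. The paper argues by contradiction through its loop-division machinery: assuming $b < \min(a,\tilde a)$, it applies Lemma \ref{beta} to strip the $b$ copies of $p$ shared by $n$ and $\gamma$, producing an integral loop for the $3x+\gamma/p^b$ problem in which $p$ still divides the first two odd terms but no longer divides the additive constant; Lemma \ref{alpha} (the ``two of three'' divisibility property for the relation $3C_\gamma^{i_j}(n)+\gamma = 2^{m_{j+1}}C_\gamma^{i_{j+1}}(n)$) then forces $p \mid \gamma/p^b$, contradicting the maximality of $b$. You instead work directly with the single identity $\gamma = C_\gamma^1(n) - 3n$, valid because $n$ is odd, and the ultrametric inequality $v_p(x-y) \geq \min(v_p(x), v_p(y))$, together with the observation that $v_p(3n) \geq a$ whether or not $p = 3$. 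Your argument is more elementary and self-contained: it bypasses Lemmas \ref{alpha} and \ref{beta} entirely, and, as you correctly note, it does not use the integral-loop hypothesis at all --- only the oddness of $n$ --- so it proves a strictly more general statement. It even gives slightly more information: when $p \neq 3$ and $a \neq \tilde a$, the ultrametric inequality is an equality, so $b = \min(a,\tilde a)$ exactly. What the paper's route buys is structural coherence rather than economy --- it exercises the quotient-loop lemmas that underpin the surrounding results (e.g.\ the multiplicity lemma and Proposition \ref{delta}) --- but as a proof of this lemma in isolation, yours is shorter, handles the one genuinely delicate point (the factor of $3$) explicitly, and rests on nothing beyond the definition of $C_\gamma$.
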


\begin{proof}
    Suppose $b<\min(a,\tilde{a})$.  Then we may apply Lemma \ref{beta} to remove $b$ copies of $p$ from the integral loop.  Then we may apply Lemma \ref{alpha} forcing $\gamma$ to have another factor of $p$.  This is a contradiction.  Therefore $b \geq \min(a,\tilde{a})$.
\end{proof}

\begin{proposition}\label{delta}\textit{
    Fix $\gamma,n \in \mathbb{Z}_{>0}^{odd}$ and $N\in\mathbb{Z}_{>0}$.  Let $n$ start an integral loop of length $N$.  Let $B\in \{0,1\}^{**}$ such that $B = (\chi_\gamma^i(n))_{i\geq 0}$.  Let $\rho$ and $\nu$ be the count of the number of zeros and ones in a single period of $B$.  Let $[n]$ be the associated integral loop vector of $n$.  Let $\mathcal{M}_n$ be constructed as by Equation (\ref{m_n}).  Let $(i_j)_{j=0}^{\nu-1}$ be the indexing sequence associated to $n$.  Let $s = \gcd(n,C_\gamma^{i_1}(n))$ and $p$ a prime factor of $s$.  Define $a,d\in\mathbb{Z}_{\geq 0}$ as the maximal exponents such that $p^a$ divides every component of $[n]$ and $p^d$ divides every component of $\mathcal{M}_n \overrightarrow{\mathfrak{3}}^\nu$.  Define $b,c\in\mathbb{Z}_{\geq 0}$ as the maximal exponents so that $p^b$ divides $\gamma$ and $p^c$ divides $(2^\rho - 3^\nu)$.  Then $a,b,c,$ and $d$ satisfy the following equality:}


    \begin{equation*}
        a-b+c-d = 0.
    \end{equation*}
\end{proposition}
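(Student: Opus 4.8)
The plan is to read off the result directly from the factorization in Proposition \ref{3v}, which gives the componentwise identity
\begin{equation*}
[n]_j = \frac{\gamma}{2^\rho - 3^\nu}\, w_j, \qquad 1 \le j \le \nu,
\end{equation*}
where I write $w = \mathcal{M}_n \overrightarrow{\mathfrak{3}}^\nu$ and $w_j$ for its $j$th component. The key observation is that the scalar $\gamma/(2^\rho - 3^\nu)$ is the \emph{same} for every $j$, so passing to $p$-adic valuations will shift all $\nu$ component valuations by one common amount.

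First I would record that each $w_j$ is a nonzero integer: it is a sum of positive products of powers of $2$ and $3$ (equivalently $w_j = [n]_j(2^\rho - 3^\nu)/\gamma$ with $[n]_j = C_\gamma^{i_{j-1}}(n) > 0$), and $2^\rho - 3^\nu \ne 0$ since $n$ starts a nontrivial integral loop. Hence the $p$-adic valuation $v_p$ is finite on each of $[n]_j$, $w_j$, $\gamma$, and $2^\rho - 3^\nu$. Next, applying additivity of $v_p$ over products and quotients to the displayed identity yields, for each $j$,
\begin{equation*}
v_p([n]_j) = v_p(\gamma) - v_p(2^\rho - 3^\nu) + v_p(w_j) = b - c + v_p(w_j).
\end{equation*}
By the maximality clauses in the statement, $a = \min_{1 \le j \le \nu} v_p([n]_j)$ and $d = \min_{1 \le j \le \nu} v_p(w_j)$. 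Taking the minimum over $j$ of the previous line and using that the constant $b-c$ factors out of the minimum gives
\begin{equation*}
a = \min_j v_p([n]_j) = (b - c) + \min_j v_p(w_j) = b - c + d,
\end{equation*}
which rearranges to $a - b + c - d = 0$.

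The argument is short, so the only genuine subtlety is the bookkeeping: one must justify pulling the common scalar through the minimum, which is exactly the content of the observation that $v_p$ shifts every component by the identical amount $b - c$. I would also remark that the hypothesis $p \mid s = \gcd(n, C_\gamma^{i_1}(n))$ is what makes the statement nonvacuous. Applying Lemma \ref{alpha} to the pair $[n]_1 = n$ and $[n]_2 = C_\gamma^{i_1}(n)$ forces $p \mid \gamma$, and propagating this around the loop shows $p$ divides every component of $[n]$, so $a \ge 1$; the equality $a - b + c - d = 0$ itself, however, holds for any prime $p$.
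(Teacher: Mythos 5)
Your proof is correct and follows exactly the route the paper takes: the paper's own proof is the one-line remark that the result ``follows from Proposition \ref{3v},'' and your valuation argument (applying $v_p$ componentwise to $[n] = \frac{\gamma}{2^\rho-3^\nu}\mathcal{M}_n\overrightarrow{\mathfrak{3}}^\nu$ and pulling the common shift $b-c$ through the minimum) is precisely the bookkeeping that remark leaves implicit. Your closing observation that the hypothesis $p \mid s$ only serves to make $a \ge 1$ via Lemma \ref{alpha}, while the equality itself holds for any prime, is also accurate and in fact sharper than what the paper states.
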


\begin{proof}
    This follows from Proposition \ref{3v}.
\end{proof}

\begin{remark}
    Recall from Lemma \ref{gamma} that $b\geq a$. Since $d>0$ this implies $c\geq b-a$.
\end{remark}


\subsection{Theorem Regarding Determinant of $\mathcal{M}_n$}

\begin{theorem}\label{mod_gamma_prime}\textit{
    We will use the notation presented in Proposition \ref{delta}.  Furthermore assume that $c>b-a$. Fix $\gamma, n \in \mathbb{Z}_{>0}^{odd}$ and $N\in \mathbb{Z}_{>0}$.  Suppose further $n$ starts an integral loop of length $N$.  Let $B \in \{0,1\}^{**}$ such that $B = (\chi_\gamma^i(n))_{i\geq 0}$.  Let $\rho$ and $\nu$ be the count of the number of zeros and ones in a single period of $B$.  Let $[n]$ be the associated integral loop vector of $n$.  Let $\mathcal{M}_n$ be constructed as by Equation (\ref{m_n}).  Let $p$ be a prime factor of $[n]$ of multiplicity $a>0$, then $\det(\mathcal{M}_n) \equiv 0 \pmod p$.}
\end{theorem}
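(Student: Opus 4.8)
The plan is to read off the conclusion from the factored identity in Proposition \ref{3v}. Clearing the denominator in Equation (\ref{central_2}) gives
\[
(2^\rho - 3^\nu)\,[n] = \gamma\,\mathcal{M}_n \overrightarrow{\mathfrak{3}}^\nu,
\]
and the whole argument amounts to showing that the hypothesis $c > b-a$ forces $\overrightarrow{\mathfrak{3}}^\nu$ to be a nontrivial null vector of $\mathcal{M}_n$ modulo $p$, from which singularity of $\mathcal{M}_n$ over $\mathbb{F}_p$ is immediate.

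First I would translate the hypothesis into a divisibility statement about the column vector $\mathcal{M}_n \overrightarrow{\mathfrak{3}}^\nu$. By Proposition \ref{delta} the four exponents satisfy $a - b + c - d = 0$, so $d = a + c - b = c - (b-a)$. Hence the assumption $c > b-a$ says precisely that $d > 0$. Since $p^d$ divides every component of $\mathcal{M}_n \overrightarrow{\mathfrak{3}}^\nu$ by the definition of $d$ in Proposition \ref{delta}, $d \geq 1$ gives $\mathcal{M}_n \overrightarrow{\mathfrak{3}}^\nu \equiv 0 \pmod p$ componentwise. Thus the reduction of $\overrightarrow{\mathfrak{3}}^\nu$ lies in the kernel of $\mathcal{M}_n$ regarded as a linear map over the field $\mathbb{F}_p$.

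Second I would verify that this kernel element is actually nonzero, so that the kernel is genuinely nontrivial. The vector $\overrightarrow{\mathfrak{3}}^\nu = \sum_{j=1}^\nu 3^{\nu-j} e_j$ has last entry $3^0 = 1$, which is a unit modulo every prime $p$; in particular $\overrightarrow{\mathfrak{3}}^\nu \not\equiv 0 \pmod p$ even in the edge case $p = 3$, where the other entries vanish. A nonzero vector in the kernel of $\mathcal{M}_n$ over $\mathbb{F}_p$ means $\mathcal{M}_n$ is singular there, i.e. $\det(\mathcal{M}_n) \equiv 0 \pmod p$, which is exactly the claim.

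The one piece of bookkeeping I would be careful to spell out is that Proposition \ref{delta} presupposes $p \mid s = \gcd(n, C_\gamma^{i_1}(n))$, whereas the theorem only supplies a prime $p$ dividing $[n]$ with multiplicity $a > 0$. I would observe that $a > 0$ means $p^a$ divides both the first component $n$ and the second component $C_\gamma^{i_1}(n)$ of $[n]$, so $p \mid s$ and the exponents $a, b, c, d$ are all legitimately defined, which is what licenses the use of Proposition \ref{delta}. I do not expect a genuine obstacle here: the only substantive input is the relation $d = c - (b-a)$, and everything else is the standard fact that a matrix possessing a nonzero kernel vector over a field has vanishing determinant.
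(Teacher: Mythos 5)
Your proposal is correct and follows essentially the same route as the paper's proof: both arguments show that the hypothesis $c>b-a$ forces $\mathcal{M}_n \overrightarrow{\mathfrak{3}}^\nu \equiv \overrightarrow{0} \pmod p$ (the paper re-derives $d = c-(b-a)>0$ inline by exactly the valuation bookkeeping you import from Proposition \ref{delta}) and then conclude $\det(\mathcal{M}_n)\equiv 0 \pmod p$ from the resulting nontrivial kernel over $\mathbb{F}_p$. If anything your write-up is slightly tighter on two details the paper glosses over: you check that the last entry $3^0=1$ keeps $\overrightarrow{\mathfrak{3}}^\nu$ nonzero modulo $p$ even when $p=3$, whereas the paper only observes the vector is nonzero over $\mathbb{Z}$, and you verify that $a>0$ supplies the hypothesis $p \mid \gcd(n, C_\gamma^{i_1}(n))$ needed to invoke Proposition \ref{delta} at all.
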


\begin{proof}
    Recall Equation (\ref{central_2}) and Equation (\ref{integral_loop_formula_equ}).  We find that,

    \begin{equation*}
        \begin{aligned} {}
            [n] &= \frac{\gamma}{2^\rho-3^\nu} \mathcal{M}_n \overrightarrow{\mathfrak{3}}^\nu, \\
            p^a\times\left(\frac{[n]}{p^a}\right) &= p^{b-c} \times \frac{\gamma p^c}{(2^\rho-3^\nu)p^b} \mathcal{M}_n \overrightarrow{\mathfrak{3}}^\nu, \\
            \left(\frac{p^a\times p^c\frac{(2^\rho-3^\nu)}{p^c}}{p^b\frac{\gamma}{p^b}}\right)\left(\frac{[n]}{p^a}\right) &= \mathcal{M}_n \overrightarrow{\mathfrak{3}}^\nu, \\
            p^{c-(b-a)}\times\left(\frac{(2^\rho-3^\nu)p^b}{\gamma p^c}\right)\times \left(\frac{[n]}{p^a}\right) &= \mathcal{M}_n \overrightarrow{\mathfrak{3}}^\nu.
        \end{aligned}
    \end{equation*}

    \noindent On the left hand side, we have a product of three terms.  By hypothesis $c>b-a$, so the first term is a positive power of prime factor $p$.  The remaining two terms have no factor of $p$.  The quotient $[n]/p^a \in \mathbb{Z}^\nu$.  Thus the left hand side is a vector integer multiple of $p$.  Therefore we have,

    \begin{equation*}
        \overrightarrow{0} \equiv \mathcal{M}_n \overrightarrow{\mathfrak{3}}^\nu \pmod p .
    \end{equation*}

    \noindent As the vector $\overrightarrow{\mathfrak{3}}^\nu$ is descending powers of three, we can infer that $\overrightarrow{\mathfrak{3}}^\nu$ is not the zero vector.  Thus the null space of $\mathcal{M}_n$ modulo $p$ is at least one dimensional.  By the Rank-Nullity Theorem, we conclude that $\mathcal{M}_n$ is not full rank, and $\det(\mathcal{M}_n) \equiv 0 \pmod p$.

\end{proof}

\begin{example}Consider the integral loop induced by $n=18611$ and $\gamma = 3367$.  We find that $\rho = 12$ and $\nu = 6$.  With respect to the prime 37, we have $a = b = c = 1$.  Finally $\det(\mathcal{M}_{18611}) = -1910471786496 \equiv 0 {\pmod {37}}$.
\end{example}

\begin{remark}
    In Theorem \ref{mod_gamma_prime}, we require that $c>b-a$.  This is a necessary requirement.  Consider the case $c=b-a$.  If $a=0$ then the prime factor $p$ cannot divide $[n]$ and we are done.  On the other hand, consider the integral loop induced by $n=5$ and $\gamma = 5^4$.  In this case $\rho = 7$ and $\nu = 1$.  Thus $2^7 - 3^1 = 125 = 5^3$.  We find that $3 = 4-1$ but $\det(M_n) = 1 \not\equiv 0 \pmod 5$.  
\end{remark}

\begin{proposition}\textit{
    Fix $\gamma, n \in \mathbb{Z}_{>0}^{odd}$ and $N\in \mathbb{Z}_{\geq 0}$.  Suppose further $n$ starts an integral loop of length $N$.  Let $B \in \{0,1\}^{**}$ such that $B = (\chi_\gamma^i(n))_{i\geq 0}$.  Let $\rho$ and $\nu$ be the count of the number of zeros and ones in a single period of $B$.  Let $[n]$ be the associated integral loop vector of $n$.  Let $\mathcal{M}_n$ be constructed as by Equation (\ref{m_n}) and suppose it is invertible over $\mathbb{C}$.  If $\gamma | (2^\rho-3^\nu)$, then $\det(\mathcal{M}_n)$ contains all the prime factors of $[n]$.}
\end{proposition}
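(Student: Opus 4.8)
The plan is to recognize this statement as essentially a repackaging of Theorem~\ref{mod_gamma_prime}, applied once for each prime dividing $[n]$. First I would fix an arbitrary prime factor $p$ of $[n]$; by this I mean a prime dividing every component of $[n]$, and I let $a>0$ denote its multiplicity in the sense of Proposition~\ref{delta} (the largest exponent with $p^a$ dividing every component). Writing $b$ and $c$ for the $p$-adic valuations of $\gamma$ and of $2^\rho-3^\nu$ respectively, I set up exactly the exponent bookkeeping of Proposition~\ref{delta}, which is the natural framework for converting the divisibility hypothesis into a usable inequality.

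The central step is to translate $\gamma \mid (2^\rho-3^\nu)$ into a statement about these valuations. Since the $p$-adic valuation is monotone under divisibility, $\gamma \mid (2^\rho-3^\nu)$ forces $b \leq c$ for every prime $p$. Because $a>0$, this immediately gives $c \geq b > b-a$, so the hypothesis $c>b-a$ of Theorem~\ref{mod_gamma_prime} holds automatically. Invoking that theorem yields $\det(\mathcal{M}_n)\equiv 0 \pmod p$, i.e. $p \mid \det(\mathcal{M}_n)$. As $p$ was an arbitrary prime factor of $[n]$, every prime factor of $[n]$ divides $\det(\mathcal{M}_n)$, which is the claim. Equivalently, one may argue directly: Proposition~\ref{delta} gives $d=a+(c-b)\geq a>0$, so $p$ divides every entry of $\mathcal{M}_n\overrightarrow{\mathfrak{3}}^\nu$; since the last coordinate of $\overrightarrow{\mathfrak{3}}^\nu$ equals $1$, this vector is nonzero modulo $p$ and lies in the kernel of $\mathcal{M}_n \bmod p$, forcing $\det(\mathcal{M}_n)\equiv 0 \pmod p$.

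I would then explain the role of the hypothesis that $\mathcal{M}_n$ is invertible over $\mathbb{C}$: it guarantees $\det(\mathcal{M}_n)\neq 0$, so that the phrase ``contains all the prime factors of $[n]$'' is a genuine divisibility assertion rather than being vacuously true (every prime divides $0$). This assumption is used only to make the conclusion meaningful, not in the divisibility argument itself.

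I do not expect a serious obstacle for the statement as phrased, since it is a corollary of results already in hand. The only delicate point arises if one wished to strengthen it to track multiplicities---asserting that $p^a$, and not merely $p$, divides $\det(\mathcal{M}_n)$. Theorem~\ref{mod_gamma_prime} and the Rank-Nullity argument behind it produce only a single factor of $p$, so pinning down the exact $p$-adic valuation of $\det(\mathcal{M}_n)$ would require finer information about the integer structure of $\mathcal{M}_n$ (for instance its Smith normal form over the localization $\mathbb{Z}_{(p)}$). I would therefore keep the result at the level of set inclusion of prime factors unless the stronger form is needed.
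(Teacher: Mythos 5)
Your proposal is correct, and your ``equivalently, one may argue directly'' variant is precisely the paper's own proof: since $\gamma \mid (2^\rho-3^\nu)$, Equation (\ref{central_2}) gives $\mathcal{M}_n \overrightarrow{\mathfrak{3}}^\nu = \frac{2^\rho-3^\nu}{\gamma}[n] \equiv \overrightarrow{0} \pmod p$ for each prime $p$ dividing all components of $[n]$, whence $\det(\mathcal{M}_n)\equiv 0 \pmod p$. Your primary route through Theorem \ref{mod_gamma_prime} (via the valuation inequality $b \leq c$, hence $c > b-a$) is a sound but equivalent repackaging, since that theorem's proof rests on the same computation.
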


\begin{proof}
    Since $\gamma|(2^\rho - 3^\nu)$, $\mathcal{M}_n \overrightarrow{\mathfrak{3}}^\nu = [n] \frac{2^\rho - 3^\nu}{\gamma} \equiv \overrightarrow{0} \pmod p$ for each $p$ that divides all the terms of $[n]$.  Hence $\det(\mathcal{M}_n) \equiv 0 \pmod p$.
\end{proof}

\begin{example}
    Consider the integral loop induced by $n = 3403$ and $\gamma = 4193575$.  In this case $(2^\rho - 3^\nu) / \gamma = 1$.  We find that 83 is a prime factor for all components of $[3403]$.  Finally $\det(\mathcal{M}_{3403} = 1198591822881410672230400 \equiv 0 {\pmod {83}}$.
\end{example}

\begin{example}\label{ex2}
Consider the integral loop induced by $n=133$ and $\gamma = 175$.  We find that $\overrightarrow{\mathfrak{3}}^4$ is in the null space of $\mathcal{M}_{133} \pmod 7$,

\begin{equation*} \begin{bmatrix} 1 & 2 & 8 & 64 \\ 1 & 4 & 32 & 128 \\ 1 & 8 & 32 & 64 \\ 1 & 4 & 8 & 32 \end{bmatrix} \begin{bmatrix} 27 \\ 9 \\ 3 \\ 1 \end{bmatrix} = \begin{bmatrix} 133 \\ 287 \\ 259 \\ 119 \end{bmatrix} \equiv \begin{bmatrix} 0 \\ 0 \\ 0 \\ 0\end{bmatrix}  \pmod 7 .\end{equation*}

\noindent Now consider the integral loops denoted by $A$ and $B$:

\begin{tabular}{c|c|c|l}
     & $n$  & $\gamma$ & \\
     \hline
    A & 133 & 175 & \begin{tabular}{l} $\{133, 574, 287, 1036, 518, 259, 952, 476, 238, 119, 532, 266\}$\end{tabular} \\
    B & 19 & 25 & \begin{tabular}{l} $\{19, 82, 41, 148, 74, 37, 136, 68, 34, 17, 76, 38\}$\end{tabular}
\end{tabular}

\noindent We find that the integral loop $B = A/7$.
\end{example}

\begin{remark}
We will discuss the method to arrive at Examples (\ref{ex1}) and (\ref{ex2}).  We begin with Example (\ref{ex1}).  The finite binary sequence $B = 10101010000000000000$ has four ones and sixteen zeros.  This allows us to implement the following trick regarding differences in squares.  Let $a,b\in \mathbb{Z}_{>0}$, then $a^2-b^2 = (a-b)(a+b)$.  In our case we are allowed to do this twice: $2^{16}-3^4 = (2^4-3)(2^4+3)(2^8+9) = 13\times19\times265$.  Now fix $m\in\mathbb{Z}_{\geq 0}$.  We have the following identity regarding the shuffling of integer powers $a$ and $b$:

\begin{equation*}
    \frac{a^m-b^m}{a-b} = \sum_{i=0}^{m-1}a^{m-1-i}b^i .
\end{equation*}

\noindent In our case $a=3$, $b = 2$, and $m=\nu$.  Thus, the denominator on the left hand side vanishes.  This yields the following:

\begin{equation*}
    3^\nu-2^\nu = \sum_{i=0}^{\nu-1}3^{\nu-1-i}2^i .
\end{equation*}

\noindent Compare the above to our finite binary sequence $B$ and the Integral Loop Formula.  We would find that the sums both match.

The last trick is strictly in number theory.  The integer sequence $(3^i-2^i)_{i\geq 0}$ has a repeating pattern modulo five and thirteen.  The former appears as $\{0,1,0,4,0,1,0,4,0,...\}$.  That is to say if $i \equiv 0 \pmod 2$ then $3^i-2^i \equiv 0 \pmod 5$.  The latter appears as $\{0,1,5,6,0,3,2,5,0,9,6,2,0,...\}$.  That is to say if $i \equiv 0\pmod 4$ then $3^i-2^i \equiv 0 \pmod {13}$.  We conclude that if $i \equiv 0\pmod 4$, then $3^i-2^i$ is both a multiple of five and thirteen.  Thus our starting integer $n$ and the quantity $2^{16}-3^4$ both have two prime factors in common.

As for Example (\ref{ex2}), the construct is largely the same.  The finite binary sequence $B = 101001000100$ has four ones and eight zeros.  We apply the same difference of squares to find $2^8-3^4 = (2^2-3)(2^2+3)(2^4+9) = 1\times 7\times 25$.  The fact that $133 = 7\times 19$ is a coincidence.  However,  $B$ was chosen to demonstrate that $\mathcal{M}_n$ can be non-invertible.  Consider the following summation:

\begin{equation*}
    \begin{aligned}
        S   &= \sum_{i=0}^{m-1}(-1)^{i+1}2^{2(m-1-i)}\times 2^{\frac{i(i+1)}{2}}, \\
            &= \sum_{i=0}^{m-1}(-1)^{i+1}2^{\frac{4m+i^2-3i-4}{2}}, \\
            &= -2^{2m-2}+2^{2m-3}-2^{2m-3}+2^{2m-2}+\sum_{i=4}^{m-1}(-1)^{i+1}2^{2(m-1-i)}\times 2^{\frac{i(i+1)}{2}}.
    \end{aligned}
\end{equation*}

\noindent In our case, $m=4$ and the right-hand sum vanishes.  Thus we have,

\begin{equation*}
    S = -64+32-32+64 = 0.
\end{equation*}

\noindent That is to say, $\mathcal{M}_{133}\overrightarrow{\mathfrak{-4}}^4 = \overrightarrow{0}$.  We found this because $2^8-(-4)^4 = 0$.  This is not in the domain of the function $C_\gamma$, but is the reason $\mathcal{M}_{133}$ is non-invertible.  This will be explored again in chapter four.
\end{remark}

\begin{proposition}\label{mod_gamma}\textit{
    Fix $\gamma \in \mathbb{Z}_{>1}^{odd}$, $n \in \mathbb{Z}_{>0}^{odd}$, and $N\in\mathbb{Z}_{>0}$.  Suppose further that $n$ starts an integral loop of the $3x+\gamma$ problem of length $N$.  Let $B \in \{0,1\}^{**}$ such that $B = (\chi_\gamma^i(n))_{i\geq 0}$.  Let $\rho$ and $\nu$ be the count of the number of zeros and ones in a single period of $B$.  Let $\mathcal{M}_n$ be constructed as by Equation (\ref{m_n}).  Finally suppose the quantity $(2^\rho-3^\nu)$ is prime.  If $\gamma$ divides each component of $[n]$, then $\det(\mathcal{M}_n) \equiv 0 \pmod {2^\rho-3^\nu}$.}
\end{proposition}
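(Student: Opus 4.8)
The plan is to read the conclusion directly off the master identity of Proposition \ref{3v}, Equation (\ref{central_2}), after clearing denominators and exploiting the divisibility hypothesis. Write $q = 2^\rho - 3^\nu$ for brevity; since $n$ starts an integral loop, the Integral Loop Formula (\ref{integral_loop_formula_equ}) expresses the positive integer $n$ with positive numerator over $q$, forcing $q>0$, and by hypothesis $q$ is prime so $q \geq 2$. Multiplying Equation (\ref{central_2}) through by $q$ produces the integer identity $q[n] = \gamma\, \mathcal{M}_n \overrightarrow{\mathfrak{3}}^\nu$. The assumption that $\gamma$ divides every component of $[n]$ lets me write $[n] = \gamma w$ for some $w \in \mathbb{Z}^\nu$, so that $q \gamma w = \gamma\, \mathcal{M}_n \overrightarrow{\mathfrak{3}}^\nu$; cancelling the positive integer $\gamma$ yields
\begin{equation*}
\mathcal{M}_n \overrightarrow{\mathfrak{3}}^\nu = q\, w .
\end{equation*}

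The right-hand side is visibly a multiple of $q$, so reducing modulo $q$ I obtain $\mathcal{M}_n \overrightarrow{\mathfrak{3}}^\nu \equiv \overrightarrow{0} \pmod{q}$. This is precisely the statement that $\overrightarrow{\mathfrak{3}}^\nu$ lies in the kernel of $\mathcal{M}_n$ over $\mathbb{Z}/q\mathbb{Z}$. To convert this into a statement about the determinant I would next observe that $\overrightarrow{\mathfrak{3}}^\nu$ is not the zero vector modulo $q$: its last entry is $3^0 = 1$, which is nonzero in $\mathbb{Z}/q\mathbb{Z}$ because $q \geq 2$. Hence $\mathcal{M}_n$ has a nontrivial kernel modulo $q$.

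Finally I would invoke the primality of $q$. Because $q$ is prime, $\mathbb{Z}/q\mathbb{Z}$ is a field, and over a field a square matrix possessing a nonzero kernel vector cannot have full rank; by the Rank-Nullity Theorem its determinant must vanish. Therefore $\det(\mathcal{M}_n) \equiv 0 \pmod{q}$, which is the claim. This mirrors the closing argument of Theorem \ref{mod_gamma_prime}, the only substantive structural change being that the relevant modulus is now $q = 2^\rho - 3^\nu$ itself rather than a prime factor of $[n]$.

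I expect the only genuinely delicate point to be the appeal to primality in the last step. If $q$ were allowed to be composite, the relation $\mathcal{M}_n \overrightarrow{\mathfrak{3}}^\nu \equiv \overrightarrow{0}$ would only force $\det(\mathcal{M}_n)$ to be a non-unit (a zero divisor) modulo $q$, not necessarily $0$; the hypothesis that $2^\rho - 3^\nu$ is prime is exactly what upgrades \emph{non-invertible} to \emph{determinant} $\equiv 0$. Everything else is the routine clearing of denominators already licensed by Proposition \ref{3v}, so no additional estimates or case analysis should be required.
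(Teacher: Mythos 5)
Your proposal is correct and is essentially the paper's own proof: the paper likewise rewrites Equation (\ref{central_2}) as $\mathcal{M}_n \overrightarrow{\mathfrak{3}}^\nu = \frac{[n]}{\gamma}\,(2^\rho-3^\nu) \equiv \overrightarrow{0} \pmod{2^\rho-3^\nu}$ using the hypothesis $\gamma \mid [n]$, and concludes $\det(\mathcal{M}_n)\equiv 0$. You have merely made explicit the steps the paper leaves tacit (clearing the denominator, noting the last entry $3^0=1$ of $\overrightarrow{\mathfrak{3}}^\nu$ is a unit, and invoking primality so that $\mathbb{Z}/(2^\rho-3^\nu)\mathbb{Z}$ is a field for the rank--nullity conclusion), which matches the closing argument of Theorem \ref{mod_gamma_prime} exactly as you say.
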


\begin{proof}
    $\mathcal{M}_n \overrightarrow{\mathfrak{3}}^\nu = \frac{[n]}{\gamma} (2^\rho - 3^\nu) \equiv \overrightarrow{0} {\pmod {2^\rho - 3^\nu}}$.  Hence $\det(\mathcal{M}_n) \equiv 0 {\pmod {2^\rho - 3^\nu}}$.
\end{proof}

\begin{example}
    Consider the integral loop induced by $n = 7$ and $\gamma = 7$.  We find that $\rho = 4$ and $\nu = 2$ such that $2^4 - 3^2 = 7$.  The matrix $\mathcal{M}_7$ has two repeated rows and thus its determinant vanishes.  Consequently, $\det(\mathcal{M}_7) = 0 \equiv 0 \pmod 7$.
\end{example}

\section{Construction of Matrix $\mathcal{R}_n$}

\subsection{Classical Results of Linear Algebra}

\begin{definition}\normalfont{
    Fix $n\in\mathbb{Z}_{>0}$ and $x_1,...,x_n \in \mathbb{C}$.  We define the \textit{Vandermode matrix}\cite{vandermode} $\mathcal{V}(x_1,...,x_n)$ as,}

    \begin{equation*}
        \mathcal{V}(x_1,...,x_n) = 
        \begin{bmatrix}
            1 & x_1 & x_1^2 & \cdots & x_1^{n-1} \\
            1 & x_2 & x_2^2 & \cdots & x_2^{n-1} \\
            \vdots & \vdots & \vdots & \ddots & \vdots \\
            1 & x_n & x_n^2 & \cdots &  x_n^{n-1}
        \end{bmatrix}.
    \end{equation*}
\end{definition}

\begin{lemma}\label{vander_det}
    Fix $n \in \mathbb{Z}_{>0}$ and $x_1,...,x_n \in \mathbb{C}$.  Let $\mathcal{V}(x_1,...,x_n)$ be a Vandermode matrix. Then,

    \begin{equation}\label{vander}
        \det(\mathcal{V}(x_1,...,x_n)) = \prod_{1\leq i < j \leq n} (x_j-x_i).
    \end{equation}
\end{lemma}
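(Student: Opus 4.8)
The plan is to prove the formula by induction on $n$, using elementary column operations to reduce the $n \times n$ determinant to an $(n-1) \times (n-1)$ Vandermonde determinant in the variables $x_2, \ldots, x_n$. The base case $n = 1$ is immediate, since $\det(\mathcal{V}(x_1)) = 1$ and the empty product over $1 \leq i < j \leq 1$ equals $1$ by convention.

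For the inductive step, assume the formula holds for every Vandermonde matrix of size $(n-1) \times (n-1)$. Starting from $\mathcal{V}(x_1, \ldots, x_n)$, I would apply the column operations $C_k \mapsto C_k - x_1 C_{k-1}$ for $k = n, n-1, \ldots, 2$, performed from right to left so that each operation uses a column that has not yet been modified. Since the $(1,k)$ entry is $x_1^{k-1}$, each such operation sends it to $x_1^{k-1} - x_1 \cdot x_1^{k-2} = 0$, so the first row becomes $(1, 0, \ldots, 0)$. In any row $i \geq 2$ the new $(i,k)$ entry is $x_i^{k-1} - x_1 x_i^{k-2} = x_i^{k-2}(x_i - x_1)$, and column operations leave the determinant unchanged.

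Expanding along the first row (the single nonzero entry sits at position $(1,1)$, so the cofactor sign is $(-1)^{1+1} = +1$) yields exactly one $(n-1) \times (n-1)$ minor whose $(i,k)$ entry, with $i,k$ now ranging over $2, \ldots, n$, equals $x_i^{k-2}(x_i - x_1)$. Factoring $(x_i - x_1)$ out of each row $i$ produces the scalar $\prod_{i=2}^{n}(x_i - x_1)$ multiplied by the determinant of the matrix whose $(i,k)$ entry is $x_i^{k-2}$; as $k$ runs from $2$ to $n$ the exponents $k-2$ run from $0$ to $n-2$, so this is precisely $\mathcal{V}(x_2, \ldots, x_n)$. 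By the induction hypothesis its determinant is $\prod_{2 \leq i < j \leq n}(x_j - x_i)$, and since $\prod_{i=2}^{n}(x_i - x_1)$ supplies exactly the missing factors with smaller index $1$, the two products combine to give $\prod_{1 \leq i < j \leq n}(x_j - x_i)$, as required.

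The main obstacle is purely bookkeeping: one must verify that the column operations are carried out in decreasing order of $k$ so that the entry $x_1^{k-1}$ is cancelled against the still-unaltered column $k-1$, and that the cofactor sign and the row-factoring are tracked correctly. An alternative route that sidesteps the index management is the polynomial argument: regarding $\det(\mathcal{V})$ as a polynomial in $x_1, \ldots, x_n$, it vanishes whenever $x_i = x_j$ (two equal rows), so each $(x_j - x_i)$ divides it; a degree count shows both sides are homogeneous of total degree $\binom{n}{2}$, and comparing the coefficient of the diagonal monomial $x_2 x_3^2 \cdots x_n^{n-1}$ (which is $+1$ on each side) pins the remaining constant at $1$.
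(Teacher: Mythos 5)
Your proof is correct. It shares the paper's overall skeleton — induction on $n$, reducing $\det(\mathcal{V}(x_1,\ldots,x_n))$ to $\prod_{j=2}^{n}(x_j-x_1)\cdot\det(\mathcal{V}(x_2,\ldots,x_n))$ — but the mechanics are genuinely different and, in fact, leaner. The paper subtracts the first \emph{row} from the others, which produces entries of the form $x_i^{k}-x_1^{k}$; to extract the factors $(x_i-x_1)$ it must then invoke the geometric-sum identity and factor the resulting minor as a diagonal matrix times a matrix of sums $\sum_{i}x_j^{k-i}x_1^{i}$, which is in turn written as $\mathcal{V}(x_2,\ldots,x_n)$ multiplied by a unipotent upper-triangular matrix in powers of $x_1$ whose determinant is $1$. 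Your column operations $C_k\mapsto C_k-x_1C_{k-1}$, executed right to left, deliver the entries already in factored form $x_i^{k-2}(x_i-x_1)$, so the row-factoring immediately exposes $\mathcal{V}(x_2,\ldots,x_n)$ with no auxiliary triangular factorization and no geometric-sum bookkeeping — a real simplification over the paper's two-stage matrix decomposition. Your closing sketch (divisibility of the determinant by each $x_j-x_i$ in the polynomial ring, a homogeneity count giving total degree $\binom{n}{2}$ on both sides, and normalization via the coefficient of $x_2x_3^2\cdots x_n^{n-1}$) is a second, independent argument that does not appear in the paper at all; it trades the induction for unique factorization in $\mathbb{Z}[x_1,\ldots,x_n]$ and generalizes gracefully, though as stated it is a sketch rather than the worked proof.
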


\begin{proof}
    We will prove by induction\cite{vandermode}.  The case $n=1$ is trivial.  Consider the case $n=2$:

    \begin{equation*}
        \det(\mathcal{V}(x_1,x_2)) = \left| 
        \begin{matrix}
            1 & x_1 \\
            1 & x_2
        \end{matrix}
        \right| = x_2 - x_1 .
    \end{equation*} 

    \noindent Now suppose the Equation (\ref{vander}) holds up to $n-1$.  Then for a Vandermode matrix of order $n$ we have,

    \begin{equation*}
        \begin{aligned}
            \det(\mathcal{V}(x_1,...,x_n)) &= 
                \left|\begin{matrix}
                    1 & x_1 & x_1^2 & \cdots & x_1^{n-1} \\
                    1 & x_2 & x_2^2 & \cdots & x_2^{n-1} \\
                    \vdots & \vdots & \vdots & \ddots & \vdots \\
                    1 & x_n & x_n^2 & \cdots &  x_n^{n-1}
                \end{matrix}\right|_{n\times n}, \\
            &= \left|\begin{matrix}
                    1 & x_1 & x_1^2 & \cdots & x_1^{n-1} \\
                    0 & x_2-x_1 & x_2^2-x_1^2 & \cdots & x_2^{n-1}-x_1^{n-1} \\
                    \vdots & \vdots & \vdots & \ddots & \vdots \\
                    0 & x_n-x_1 & x_n^2-x_1^2 & \cdots &  x_n^{n-1}-x_1^{n-1}
                \end{matrix}\right|_{n\times n}, \\
            &= \left|\begin{matrix}
                x_2-x_1 & x_2^2-x_1^2 & \cdots & x_2^{n-1}-x_1^{n-1} \\
                \vdots & \vdots & \ddots & \vdots \\
                 x_n-x_1 & x_n^2-x_1^2 & \cdots &  x_n^{n-1}-x_1^{n-1}
            \end{matrix}\right|_{(n-1)\times (n-1)}.
        \end{aligned}
    \end{equation*}

    \noindent Where the subscript $n\times n$ indicates the size of the determinant.

    \begin{equation*}
        \begin{aligned}
            \det&(\mathcal{V}(x_1,...,x_n)) \\
            &= \left|\begin{bmatrix}
                x_2 - x_1 & 0 & 0 & \cdots & 0 \\
                0 & x_3-x_1 & 0 & \cdots & 0 \\
                0 & 0 & x_4 - x_1 & \cdots & 0 \\
                \vdots & \vdots & \vdots & \ddots & \vdots \\
                0 & 0 & 0 & \cdots & x_n - x_1
            \end{bmatrix}
            \begin{bmatrix}
                1 & x_2+x_1 & \cdots & \sum_{i=0}^{n-2}x_2^{n-2-i}x_1^i \\
                1 & x_3+x_1 & \cdots & \sum_{i=0}^{n-2}x_3^{n-2-i}x_1^i \\
                \vdots & \vdots & \ddots & \vdots  \\
                1 & x_n+x_1 & \cdots & \sum_{i=0}^{n-2}x_n^{n-2-i}x_1^i \\
            \end{bmatrix}\right|, \\
            &= \prod_{j=2}^{n} (x_j-x_1) \left| \begin{matrix}
                1 & x_2+x_1 & \cdots & \sum_{i=0}^{n-2}x_2^{n-2-i}x_1^i \\
                1 & x_3+x_1 & \cdots & \sum_{i=0}^{n-2}x_3^{n-2-i}x_1^i \\
                \vdots & \vdots & \ddots & \vdots  \\
                1 & x_n+x_1 & \cdots & \sum_{i=0}^{n-2}x_n^{n-2-i}x_1^i \\
            \end{matrix}\right|, \\
            &= \prod_{j=2}^{n} (x_j-x_1) \left| \begin{bmatrix}
                    1 & x_2 & x_2^2 & \cdots & x_2^{n-1} \\
                    1 & x_3 & x_3^2 & \cdots & x_3^{n-1} \\
                    \vdots & \vdots & \vdots & \ddots & \vdots \\
                    1 & x_n & x_n^2 & \cdots &  x_n^{n-1}
                \end{bmatrix}
                \begin{bmatrix}
                    1 & x_1 & x_1^2 & \cdots & x_1^{n-2} \\
                    0 & 1 & x_1 & \cdots & x_1^{n-3} \\
                    0 & 0 & 1 & \cdots & x_1^{n-4} \\
                    \vdots & \vdots & \vdots & \ddots & \vdots \\
                    0 & 0 & 0 & \cdots & 1
                \end{bmatrix}\right|, \\
            &= \prod_{j=2}^{n} (x_j-x_1) \left| \begin{matrix}
                    1 & x_2 & x_2^2 & \cdots & x_2^{n-1} \\
                    1 & x_3 & x_3^2 & \cdots & x_3^{n-1} \\
                    \vdots & \vdots & \vdots & \ddots & \vdots \\
                    1 & x_n & x_n^2 & \cdots &  x_n^{n-1}
                \end{matrix}\right|.
        \end{aligned}
    \end{equation*}
    \noindent We then conclude by the induction hypothesis that the matrix determinant matches Equation (\ref{vander}).  Thus we have,

    \begin{equation*}
        \det(\mathcal{V}(x_1,...,x_n)) = \prod_{j=2}^{n} (x_j-x_1)\prod_{2\leq i<j\leq n} (x_j-x_i) = \prod_{1\leq i < j \leq n} (x_j-x_i)
    \end{equation*}
\end{proof}

\begin{definition}\normalfont
    Fix $n\in\mathbb{Z}_{>0}$ and $x_0,...,x_{n-1}\in\mathbb{C}$.  We define the \textit{Circulant matrix}\cite{Hugh2004}\cite{LEHMER197343} $\mathcal{R}(x_0,...,x_{n-1})$ as,

    \begin{equation}\label{circulant_general}
        \mathcal{R}(x_0,...,x_{n-1}) = \sum_{i=0}^{n-1}x_i \mathcal{P}^i
        = \begin{bmatrix}
            x_0 & x_1 & x_2 & \cdots & x_{n-2} & x_{n-1} \\
            x_{n-1} & x_0 & x_1 & \cdots & x_{n-3} & x_{n-2} \\
            x_{n-2} & x_{n-1} & x_0 & \cdots & x_{n-4} & x_{n-3} \\
            \vdots & \vdots & \vdots & \ddots & \vdots & \vdots \\
            x_2 & x_3 & x_4 & \cdots & x_0 & x_1 \\
            x_1 & x_2 & x_3 & \cdots & x_{n-1} & x_0
        \end{bmatrix}.
    \end{equation}
    
 \end{definition}

 \begin{example}
    Consider the numbers $0,1,2$ and $3$ in that order.  Then the circulant matrix $\mathcal{R}(0,1,2,3)$ appears as,

    \begin{equation*}
        \mathcal{R}(0,1,2,3) = \sum_{j=0}^3 j \mathcal{P}^{j} = \begin{bmatrix}
            0 & 1 & 2 & 3 \\
            3 & 0 & 1 & 2 \\
            2 & 3 & 0 & 1 \\
            1 & 2 & 3 & 0
        \end{bmatrix}.
    \end{equation*}
\end{example}

 \begin{proposition}
     Fix $n\in\mathbb{Z}_{>0}$ and $x_0,...,x_{n-1}\in\mathbb{C}$.  Let $\mathcal{R}(x_0,...,x_{n-1})$ be a Circulant matrix\cite{circulant_proof}.  Let $\epsilon = e^{\frac{2\pi i}{n}}$ be a primitive $n$-th root of unity.  Then,

     \begin{equation}\label{circ_det_gen}
        \det(\mathcal{R}(x_0,...,x_{n-1})) = \prod_{k=0}^{n-1}\left(\sum_{h=0}^{n-1}x_h \epsilon^{kh}\right).
     \end{equation}
 \end{proposition}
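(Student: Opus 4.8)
The plan is to exploit the fact that every circulant matrix is a single polynomial in the cyclic permutation matrix $\mathcal{P}$, and that $\mathcal{P}$ is diagonalized by the discrete Fourier basis. Writing $p(z) = \sum_{h=0}^{n-1} x_h z^h$, the defining Equation (\ref{circulant_general}) says precisely that $\mathcal{R}(x_0,\ldots,x_{n-1}) = p(\mathcal{P})$. So once $\mathcal{P}$ is diagonalized, the eigenvalues of $\mathcal{R}$ are the values of $p$ at the eigenvalues of $\mathcal{P}$, and the determinant is their product.

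First I would introduce the Fourier matrix $F$ whose columns are the vectors $v_k = (1, \epsilon^k, \epsilon^{2k}, \ldots, \epsilon^{(n-1)k})^T$ for $0 \le k \le n-1$. This matrix is exactly the Vandermonde matrix $\mathcal{V}(\epsilon^0, \epsilon^1, \ldots, \epsilon^{n-1})$, and since $\epsilon$ is a primitive $n$-th root of unity the nodes $\epsilon^0, \ldots, \epsilon^{n-1}$ are pairwise distinct. Lemma \ref{vander_det} then gives $\det(F) = \prod_{0 \le i < j \le n-1}(\epsilon^j - \epsilon^i) \ne 0$, so $F$ is invertible.

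Second, I would verify the eigenvector relation $\mathcal{P} v_k = \epsilon^k v_k$. Because $\mathcal{P}$ cyclically shifts coordinates, the entry of $\mathcal{P} v_k$ in position $m$ equals the entry of $v_k$ in position $m+1$ modulo $n$, namely $\epsilon^{(m+1)k} = \epsilon^k \epsilon^{mk}$; reading this off coordinate by coordinate yields $\mathcal{P} v_k = \epsilon^k v_k$. Consequently $\mathcal{R} v_k = \sum_h x_h \mathcal{P}^h v_k = \left(\sum_h x_h \epsilon^{kh}\right) v_k = p(\epsilon^k) v_k$, so each $v_k$ is an eigenvector of $\mathcal{R}$ with eigenvalue $p(\epsilon^k)$. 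In matrix form this is $\mathcal{R} F = F D$ with $D = \mathrm{diag}(p(\epsilon^0), \ldots, p(\epsilon^{n-1}))$.

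Finally, taking determinants of $\mathcal{R} F = F D$ and cancelling the nonzero factor $\det(F)$ gives $\det(\mathcal{R}) = \det(D) = \prod_{k=0}^{n-1} p(\epsilon^k)$, which is exactly Equation (\ref{circ_det_gen}). I expect the main obstacle to be purely bookkeeping: confirming that the cyclic-shift action encoded by the specific index convention $\mathcal{P} = \sum_j e_{j,|j|_\nu+1}$ produces the shift $m \mapsto m+1 \pmod n$ in the direction consistent with the eigenvalue $\epsilon^k$ rather than $\epsilon^{-k}$. Getting this orientation right is what makes the eigenvalues come out as $p(\epsilon^k)$ with the stated sign in the exponent; everything else is routine.
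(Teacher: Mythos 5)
Your proposal is correct and follows essentially the same route as the paper: both arguments show that the Fourier vectors $v_k = (1,\epsilon^k,\ldots,\epsilon^{(n-1)k})^T$ are eigenvectors of $\mathcal{R}$ with eigenvalues $\sum_h x_h \epsilon^{kh}$, both invoke Lemma \ref{vander_det} to see that the Vandermonde matrix formed from these vectors is nonsingular, and both conclude that the determinant is the product of the eigenvalues. The only cosmetic difference is that you package the computation as $\mathcal{R}F = FD$ via the functional-calculus observation $\mathcal{R} = p(\mathcal{P})$, whereas the paper verifies the eigenvector relation $\mathcal{R}\overrightarrow{v} = \lambda\overrightarrow{v}$ component by component; your index-orientation check of $\mathcal{P}$ resolves correctly, since $(\mathcal{P}v)_m = v_{m+1 \bmod n}$ indeed yields the eigenvalue $\epsilon^k$ as stated.
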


 \begin{proof}
    Let $\omega$ be any $n$-th root of unity and consider the vector,

    \begin{equation*}
        \overrightarrow{v} = 
        \begin{bmatrix} 1 \\ \omega \\ \omega^2 \\ \vdots \\ \omega^{n-1}
        \end{bmatrix}
    \end{equation*}

    \noindent Consider the first component in the product $\mathcal{R}(x_0,...,x_{n-1})\overrightarrow{v}$,

    \begin{equation*}
        x_0 + x_1\omega+x_2\omega^2+...+x_{n-1}\omega^{n-1} := \lambda.
    \end{equation*}

    \noindent The second component in the product $\mathcal{R}(x_0,...,x_{n-1})\overrightarrow{v}$ is a multiple of $\lambda$,

    \begin{equation*}
        \begin{aligned}
        x_{n-1} + x_0\omega+x_1\omega^2+...+x_{n-2}\omega^{n-1} &= (x_{n-1}\omega^{n-1} + x_0+x_1\omega+...+x_{n-2}\omega^{n-2})\omega, \\
        &= \lambda \omega.
        \end{aligned}
    \end{equation*}

    \noindent In general we find the $i$-th component of $\mathcal{R}(x_0,...,x_{n-1})\overrightarrow{v}$ is $\omega^{i-1}\lambda$.  Thus $\mathcal{R}(x_0,...,x_{n-1})\overrightarrow{v} = \lambda \overrightarrow{v}$.  As $\overrightarrow{v}$ is not the zero vector, we conclude it is an eigenvector with eigenvalue $\lambda$.  The same property is true for any $n$-th root of unity.  Thus let $\omega = \epsilon^k$ for $0\leq k\leq n-1$.  Thus,

    \begin{equation*}
        \overrightarrow{v_k} = 
        \begin{bmatrix}
            1 \\ \epsilon^k \\ \epsilon^{2k} \\ \vdots \\ \epsilon^{(n-1)k}
        \end{bmatrix}.
    \end{equation*}

    \noindent is an eigenvector with eigenvalue,

    \begin{equation*}
    \lambda_k = x_0 + x_1\epsilon^k +x_2\epsilon^{2k}+...+x_{n-1}\epsilon^{(n-1)k}.
    \end{equation*}

    Furthermore, the vectors $\overrightarrow{v_k}$ are linearly independent.  Consider the matrix $B$ whose columns the vectors $\overrightarrow{v_k}$.

    \begin{equation*}
        B = \begin{bmatrix}
            1 & 1 & 1 & \cdots & 1 \\
            1 & \epsilon & \epsilon^2 & \cdots & \epsilon^{n-1} \\
            1 & \epsilon^2 & \epsilon^4 & \cdots & \epsilon^{2(n-1)} \\
            \vdots & \vdots & \vdots &\ddots & \vdots \\
            1 & \epsilon^{n-1} & \epsilon^{2(n-1)} & \cdots & \epsilon^{(n-1)(n-1)} \\
        \end{bmatrix}.
    \end{equation*}
    \noindent $B$ is a Vandermode matrix.  By Lemma \ref{vander_det} we have proven the determinant of $B$ is $\prod_{0\leq i<j\leq n-1} (\epsilon^j-\epsilon^i) \neq 0$.  Thus $B$ is nonsingular, and $\lambda_k$ for $0\leq k \leq n-1$ are the eigenvalues of $\mathcal{R}(x_0,...,x_{n-1})$.  As the determinant of a matrix is the product of its eigenvalues.  Thus we have,

    \begin{equation*}
        \det(\mathcal{R}(x_0,...,x_{n-1})) =\prod_{k=0}^{n-1}\lambda_k = \prod_{k=0}^{n-1}\left(\sum_{h=0}^{n-1}x_h \epsilon^{kh}\right).
    \end{equation*}
 \end{proof}

 \begin{definition}\normalfont
    Fix $n\in\mathbb{Z}_{>0}$ and $x_0,...,x_{n-1}\in \mathbb{C}$.  Let $\mathcal{R}(x_0,...,x_{n-1}$ be a Circulant matrix.  We call $f(w) = x_0+x_1w+x_2w^2+...+x_{n-1}w^{n-1}$ the \textit{associated polynomial} to $\mathcal{R}(x_0,...,x_{n-1})$.
 \end{definition}

 \begin{lemma}\label{w_zero}
     Fix $n\in\mathbb{Z}_{>0}$ and $x_0,...,x_{n-1}\in \mathbb{C}$.  Let $\mathcal{R}(x_0,...,x_{n-1})$ be a Circulant matrix.  Then the rank of $\mathcal{R}(x_0,...,x_{n-1})$ is $n-d$, where $d$ is the degree of polynomial $g(w) = \gcd(f(w),w^n-1)$\cite{3498906} .
 \end{lemma}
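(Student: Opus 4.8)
The plan is to read off the rank directly from the spectral decomposition established in the preceding proposition on the determinant of a circulant matrix. There it was shown that the $n$ vectors $\overrightarrow{v_k} = (1, \epsilon^k, \epsilon^{2k}, \ldots, \epsilon^{(n-1)k})^{\mathsf{T}}$ for $0 \le k \le n-1$ are linearly independent eigenvectors of $\mathcal{R}(x_0,\ldots,x_{n-1})$, with corresponding eigenvalues $\lambda_k = f(\epsilon^k)$, where $f$ is the associated polynomial and $\epsilon = e^{2\pi i/n}$. Consequently $\mathcal{R}$ is diagonalizable: writing $B$ for the invertible Vandermonde matrix of eigenvectors, we have $\mathcal{R} = B\,\mathrm{diag}(\lambda_0,\ldots,\lambda_{n-1})\,B^{-1}$. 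Since rank is invariant under left and right multiplication by invertible matrices, $\mathrm{rank}(\mathcal{R})$ equals the rank of the diagonal matrix, namely the number of indices $k$ with $\lambda_k \neq 0$.

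First I would therefore reduce the claim to the counting identity $d = \#\{k : f(\epsilon^k) = 0\}$; granting this, the rank is $n$ minus the number of vanishing eigenvalues, which is exactly $n-d$. To establish the identity I would use that the roots of $w^n - 1$ over $\mathbb{C}$ are precisely the distinct $n$-th roots of unity $\epsilon^0, \ldots, \epsilon^{n-1}$. A common root of $f(w)$ and $w^n - 1$ is then exactly an $\epsilon^k$ with $f(\epsilon^k) = 0$, and the greatest common divisor $g(w) = \gcd(f(w), w^n - 1)$ is the monic polynomial whose roots are precisely these common roots.

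The one point requiring care — and the \emph{main obstacle} — is accounting for multiplicities in the gcd. Because $w^n - 1$ is separable over $\mathbb{C}$ (for $n \ge 1$ it shares no root with its derivative $n w^{n-1}$), each common root appears in $g$ with multiplicity exactly one. Hence $\deg g$ equals the number of distinct common roots, which is $\#\{k : f(\epsilon^k) = 0\} = \#\{k : \lambda_k = 0\}$. Combining this with the rank computation yields $\mathrm{rank}(\mathcal{R}) = n - d$, as claimed. I would close by noting that this simultaneously recovers the determinant formula, since $\det(\mathcal{R}) \neq 0$ precisely when $d = 0$, and identifies the dimension of the null space of $\mathcal{R}$ as exactly $d$.
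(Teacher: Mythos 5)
Your proof is correct, and it is in fact more complete than the paper's own argument. Both rest on the same spectral data from the preceding proposition --- the eigenpairs $\lambda_k = f(\epsilon^k)$ with the Vandermonde eigenbasis --- but the paper's proof stops after rewriting the determinant as $\prod_{\zeta\in\mu_n} f(\zeta)$ and concluding that it vanishes iff $f(w)$ and $w^n-1$ share a common zero; that settles only the dichotomy $d=0$ versus $d\geq 1$ and never actually derives the stated rank $n-d$. You supply the two steps the paper elides: (i) since the Vandermonde matrix $B$ of eigenvectors is invertible, $\mathcal{R} = B\,\mathrm{diag}(\lambda_0,\ldots,\lambda_{n-1})\,B^{-1}$, and rank is invariant under similarity, so $\mathrm{rank}(\mathcal{R})$ equals the number of nonvanishing $\lambda_k$; and (ii) since $w^n-1$ is separable over $\mathbb{C}$, the polynomial $g=\gcd(f(w),w^n-1)$ is squarefree, so $\deg g$ counts exactly the distinct common roots, i.e.\ the number of indices $k$ with $\lambda_k=0$. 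Your flagging of the multiplicity issue as the main obstacle is precisely on point --- without separability $\deg g$ could fail to count vanishing eigenvalues, which is exactly why the paper's subsequent lemma in characteristic $p$, where $w^n-1$ factors as $(w^m-1)^{p^t}$, requires separate treatment. One cosmetic caveat: the gcd is defined only up to a nonzero scalar, so one should fix it monic for $d$ to be well defined; this does not affect your argument.
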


 \begin{proof}
 Let $\mu_n$ be the set of complex solutions to $w^n-1$.  It is immediate that we can rewrite Equation (\ref{circ_det_gen}) in terms of $f(w)$.
    \begin{equation*}
        \det(\mathcal{R}(x_0,...,x_{n-1})) = \prod_{k=0}^{n-1}\left(\sum_{h=0}^{n-1}x_h \epsilon^{kh}\right) = \prod_{\zeta \in \mu_n} f(\zeta).
    \end{equation*}

    \noindent Thus $f(w)$ vanishes if and only if $w^n-1$ have a common zero.
 \end{proof}

 \begin{lemma}\label{w_zero_char_p}
     Fix $n,p\in\mathbb{Z}_{>0}$ and $x_0,...,x_{n-1}\in \mathbb{C}$.  Let $p$ be prime with $p\nmid n$.  Let $\mathcal{R}(x_0,...,x_{n-1})$ be a Circulant matrix.  Then $\det(\mathcal{R}(x_0,...,x_{n-1})) \equiv 0 \pmod p$ if and only if $f(w)$ and $w^n-1$ share a common zero\cite{3498906} .
 \end{lemma}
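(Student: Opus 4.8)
The plan is to reduce the determinant formula of the preceding proposition modulo $p$, using the hypothesis $p \nmid n$ to guarantee that $w^n-1$ remains separable in characteristic $p$ so that no root is lost upon reduction. Although the coefficients are written as complex numbers, the assertion $\det(\mathcal{R}) \equiv 0 \pmod p$ is only meaningful for integer coefficients, so I treat $x_0,\dots,x_{n-1}$ as integers (as they are in the intended application, where they are powers of two) and work inside the algebraic closure $\overline{\mathbb{F}_p}$.

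First I would argue that the identity $\det(\mathcal{R}(x_0,\dots,x_{n-1})) = \prod_{\zeta\in\mu_n} f(\zeta)$ established in the proof of Lemma \ref{w_zero} is in fact a polynomial identity with integer coefficients in the variables $x_0,\dots,x_{n-1}$. The right-hand side is symmetric in the $n$-th roots of unity, hence expressible as a polynomial in their elementary symmetric functions, which are precisely the integer coefficients of $w^n-1$; thus $\prod_{\zeta\in\mu_n} f(\zeta)$ lies in $\mathbb{Z}[x_0,\dots,x_{n-1}]$ (it is, up to sign, the resultant of $f(w)$ and $w^n-1$). Since both sides of Equation (\ref{circ_det_gen}) agree for every complex choice of the $x_i$, they coincide as elements of $\mathbb{Z}[x_0,\dots,x_{n-1}]$, and the identity therefore descends to any commutative ring, in particular surviving reduction modulo $p$.

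Next I would establish separability. Because $p \nmid n$, the formal derivative of $w^n-1$ is $n w^{n-1}$, which shares no root with $w^n-1$; hence $w^n-1$ splits into $n$ distinct linear factors over $\overline{\mathbb{F}_p}$, so the set $\mu_n \subset \overline{\mathbb{F}_p}$ of $n$-th roots of unity has exactly $n$ elements and contains a primitive member $\epsilon$. Writing $\bar f$ for the reduction of $f$ modulo $p$, the descended identity reads $\overline{\det(\mathcal{R})} = \prod_{\zeta\in\mu_n}\bar f(\zeta)$ inside the field $\overline{\mathbb{F}_p}$. As a field is an integral domain, this product vanishes if and only if some factor $\bar f(\zeta)$ is zero, that is, if and only if $\bar f(w)$ and $w^n-1$ possess a common root in $\overline{\mathbb{F}_p}$. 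This yields exactly the claimed equivalence: $\det(\mathcal{R}) \equiv 0 \pmod p$ if and only if $f(w)$ and $w^n-1$ share a common zero.

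The main obstacle is the first step, namely certifying that the determinant formula, proven analytically over $\mathbb{C}$ with a fixed primitive root $\epsilon$, is a universal polynomial identity over $\mathbb{Z}$ and hence valid after reduction to characteristic $p$. Everything afterward is bookkeeping, but it is precisely the hypothesis $p \nmid n$, ensuring separability of $w^n-1$, that prevents the product formula from collapsing or double-counting roots once we pass to $\overline{\mathbb{F}_p}$.
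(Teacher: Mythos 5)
Your proof is correct and takes essentially the same route as the paper: reduce the factorization $\det(\mathcal{R}(x_0,\dots,x_{n-1}))=\prod_{\zeta\in\mu_n}f(\zeta)$ from Lemma \ref{w_zero} into $\overline{\mathbb{F}_p}$ and observe that the product vanishes there if and only if some factor $\bar f(\zeta)$ does. The only differences are that you explicitly justify the descent of the complex identity to characteristic $p$ (identifying the product as the resultant of $f(w)$ and $w^n-1$, hence an integer polynomial identity in $x_0,\dots,x_{n-1}$) --- a step the paper's proof silently assumes --- and that the paper writes $n=p^t m$ with $p\nmid m$ and $w^n-1=(w^m-1)^{p^t}$ so as to cover even the case $p\mid n$, which under the lemma's hypothesis $p\nmid n$ collapses ($t=0$, $m=n$) to exactly the separable situation you argue from.
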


 \begin{proof}
     Write $n = p^t m$ for some $t,m \in \mathbb{Z}$ and $p \nmid m$.  Then $w^n-1 = (w^m-1)^{p^t}$.  Following from Lemma \ref{w_zero},

     \begin{equation*}
         \det(\mathcal{R}(x_0,...,x_{n-1})) = \prod_{\zeta \in \mu_m} f(\zeta)^{p^t}.
     \end{equation*}

     \noindent Where $\mu_m$ is the $m$ roots of unity in the algebraic closure of the field $\mathbb{F}_p$.
 \end{proof}



\subsection{A Second Theorem Regarding the Determinant of $\mathcal{M}_n$}

\begin{theorem}\label{combo}
    Fix $n\in\mathbb{Z}_{>0}$.  Let $F(n)$ be the number of $n$-tuples $(x_0,x_1,...,x_{n-1})$ satisfying\cite{Brualdi1970AnEP},

    \begin{equation*}
        \begin{aligned}
        x_i \geq 0&, (i=0,1,...,n-1)\\
        \sum_{i=0}^{n-1}x_i &= n,\\
        \sum_{i=0}^{n-1}ix_i &\equiv 0 \pmod n . \\
        \end{aligned}
    \end{equation*}
    \noindent Then,

    \begin{equation*}
        F(n) = \frac{1}{n} \sum_{d | n} {{2d-1} \choose d} \phi\left(\frac{n}{d}\right),
    \end{equation*}

    \noindent where the summation extends over all positive integers $d$ dividing $n$, and where $\phi$ is Euler's function.
\end{theorem}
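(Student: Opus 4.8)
The plan is to count the tuples directly by detecting the congruence constraint with a roots-of-unity filter and then evaluating the resulting sum through a single-variable generating function. Write $\omega = e^{2\pi i/n}$, a primitive $n$-th root of unity. The key elementary fact is that for any integer $s$ the average $\frac{1}{n}\sum_{k=0}^{n-1}\omega^{ks}$ equals $1$ when $n \mid s$ and $0$ otherwise. Applying this with $s = \sum_{i=0}^{n-1} i x_i$ replaces the third constraint by an average over $k$, giving
\begin{equation*}
F(n) = \frac{1}{n}\sum_{k=0}^{n-1}\ \sum_{\substack{x_i\geq 0\\ \sum_i x_i = n}}\ \prod_{i=0}^{n-1}\omega^{k i x_i}.
\end{equation*}
The two surviving constraints, $x_i \geq 0$ and $\sum_i x_i = n$, are exactly what a product generating function handles.

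Next I would recognize the inner sum, for each fixed $k$, as the coefficient of $t^n$ in $\prod_{i=0}^{n-1}(1-\omega^{ki}t)^{-1}$: expanding each factor as a geometric series and collecting the $t^n$ term reproduces the sum over tuples with $\sum_i x_i = n$ weighted by $\prod_i \omega^{k i x_i}$. The simplification then rests on the arithmetic of $\omega^k$. Setting $g = \gcd(k,n)$ and $d = n/g$, the element $\omega^k$ is a primitive $d$-th root of unity, so as $i$ runs over $0,\dots,n-1$ the values $\omega^{ki}$ run over all $d$-th roots of unity, each occurring exactly $g$ times. Using the factorization $\prod_{\zeta^d = 1}(1-\zeta t) = 1 - t^d$, the product collapses to $(1-t^d)^{-g}$.

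Extracting the coefficient of $t^n = t^{dg}$ from $(1-t^d)^{-g} = \sum_{m\geq 0}\binom{g-1+m}{m}t^{dm}$ forces $m=g$ and yields $\binom{2g-1}{g}$, which depends on $k$ only through $g = \gcd(k,n)$. Finally I would group the $n$ indices $k$ by the value of $\gcd(k,n)$: for each divisor $g \mid n$ there are exactly $\phi(n/g)$ values of $k \in \{0,\dots,n-1\}$ with $\gcd(k,n)=g$. Substituting and relabeling $g$ as $d$ gives $F(n) = \frac{1}{n}\sum_{d\mid n}\binom{2d-1}{d}\phi(n/d)$, as claimed; as a sanity check, the index $k=0$ (where $g=n$, $d=1$) contributes the unrestricted multiset count $\binom{2n-1}{n}$.

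I expect the main obstacle to be the clean evaluation of the generating function in the second step: correctly tracking the multiplicity structure of the roots $\omega^{ki}$, applying the identity $\prod_{\zeta^d=1}(1-\zeta t)=1-t^d$, and then keeping the two indices $g=\gcd(k,n)$ and $d=n/g$ straight when extracting the coefficient. Everything else — the filter, the geometric-series expansion, and the standard count $\#\{k : \gcd(k,n)=g\}=\phi(n/g)$ — is routine bookkeeping.
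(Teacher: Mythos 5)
Your proof is correct, and it takes a genuinely different route from the paper's, which follows Brualdi's original argument: there one first builds the two-variable generating function $f_n(w,z)=\prod_{i=0}^{n-1}(1-w^iz)^{-1}$, extracts the coefficient of $z^n$ symbolically in $w$ via the functional equation $f_n(w,wz)=\frac{1-z}{1-w^nz}f_n(w,z)$ to obtain the polynomial $g_n(w)=\prod_{r=1}^{n-1}\frac{1-w^{n+r}}{1-w^r}$ (a Gaussian binomial coefficient in disguise), and only then applies the root-of-unity filter $nF(n)=\sum_{\zeta\in\mu_n}g_n(\zeta)$, evaluating each $g_n(\zeta)$ through the limit computation $\lim_{w\to\zeta}\frac{1-w^{n+r}}{1-w^r}$ at primitive $d$-th roots of unity. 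You swap the order of operations: filter first on the congruence, so that for each fixed $k$ the remaining sum over tuples collapses --- via the multiplicity count for $g=\gcd(k,n)$ and the cyclotomic identity $\prod_{\zeta^d=1}(1-\zeta t)=1-t^d$ --- to the single coefficient extraction $[t^{dg}]\,(1-t^d)^{-g}=\binom{2g-1}{g}$, with no recursion in $z$ and no limits. (Your multiplicity claim is right: $ki \bmod n$ ranges over the $d$ multiples of $g$, each hit exactly $g$ times, and $\omega^g$ is a primitive $d$-th root of unity.) The closing bookkeeping, $\#\{k:\gcd(k,n)=g\}=\phi(n/g)$, is common to both arguments. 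As for what each buys: your filter-first version is shorter and sidesteps both the $q$-series manipulation and the delicate limit evaluations; the paper's extract-first version pays that cost but produces the explicit polynomial $g_n(w)$, whose coefficient of $w^s$ counts tuples with $\sum_i ix_i=s$ exactly, i.e.\ it refines $F(n)$ by the actual value of the weighted sum rather than just its residue class --- information your computation never materializes.
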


\begin{proof}
    Define $f_n(w,z) = [(1-z)(1-wz)...(1-w^{n-1}z)]^{-1}$.  Then,

    \begin{equation*}
        f_n(w,z) = \left(\sum_{k=0}^\infty z^k\right)\left(\sum_{k=0}^\infty w^kz^k\right)...\left(\sum_{k=0}^\infty w^{k(n-1)}z^k\right).
    \end{equation*}

    \noindent $F(n)$ is the sum of coefficients of $z^nw^{nt}$, for $0\leq t \leq n-1$, in $f_n(w,z)$.  Write $f_n(w,z) = \sum_{k=0}^{\infty} B_k z^k = B_k(n,w)$.  Then because,
    \begin{equation*}
        f_{n+1}(w,z) =\frac{f_n(w,z)}{1-w^nz},
    \end{equation*}

    \noindent and $f_n(w,wz) = [(1-wz)(1-w^2z)...(1-w^{n}z)]^{-1}$, then $f_n(w,wz) = (1-z)f_{n+1}(w,z) = \frac{1-z}{1-w^nz}f_n(w,z)$.  Thus,

    \begin{equation*}
        \begin{aligned}
        \sum_{k=0}^{\infty} B_k w^k z^k &= \frac{1-z}{1-w^nz}\sum_{k=0}^{\infty}B_k z^k, \\
        \sum_{k=0}^{\infty}B_k w^k z^k - \sum_{k=0}^\infty B_k w^{n+k}z^{k+1} &= \sum_{k=0}^\infty B_k z^k - \sum_{k=0}^\infty B_k z^{k+1}.
        \end{aligned}
    \end{equation*}

    \noindent Hence for $k\geq 1$, $B_k w^k - B_{k-1}w^{n+k-1} = B_k - B_{k-1}$, or 

    \begin{equation*}
        B_k = \frac{1-w^{n+k-1}}{1-w^k}B_{k-1}, (k\geq 1).
    \end{equation*}

    Since $B_0 = 1$, then $B_k = \prod_{r=1}^k \frac{1-w^{n+r-1}}{1-w^r}$, $(k\geq 0)$. With empty product being 1.  Therefore,

    \begin{equation*}
        f_n(w,z) = \sum_{k=0}^\infty \left\{\prod_{r=1}^k \frac{1-w^{n+r-1}}{1-w^r}\right\}z^k ,
    \end{equation*}
    \noindent and $F(n)$ is the sum of the coefficients of $w^{nt}$, $0\leq t \leq n-1$, in $g_n(w) = \prod_{r=1}^n \frac{1-w^{n+r-1}}{1-w^r} = \prod_{r=1}^{n-1}\frac{1-w^{n+r}}{1-w^r}$.  $g_n(w)$ is a polynomial in $w$ of degree $\sum_{r=1}^{n-1} \{n+r-r\} = n(n-1)$ and has nonnegative coefficients.  Since,

    \begin{equation*}
        \sum_{\zeta \in \mu_n} \zeta^k = \left\{\begin{matrix}
             n, \text{if $n$ divides $k$} \\ 0, \text{else,}
        \end{matrix}\right.
    \end{equation*}

    \noindent we have $nF(n) = \sum_{\zeta \in \mu_n}g_n(\zeta)$, where the summation extends over all $n$-th roots of unity.  Suppose $\zeta$ is a primitive $d$-th root of unity where $d|n$.  Since,

    \begin{equation*}
        \lim_{w\to \zeta} \frac{1-w^{n+r}}{1-w^r} = \left\{\begin{matrix}
            \frac{n+r}{r}, \text{ if $d$ divides $r$} \\ 1, \text{ else.}
        \end{matrix}\right.
    \end{equation*}

    Then we have that,

    \begin{equation*}
        g_n(\zeta) = \prod_{1\leq r \leq n-1 \\ r\equiv 0 \pmod d} \frac{n+r}{r} = \prod_{s=1}^{\frac{n}{d}-1}\frac{n+sd}{sd} = {{2\frac{n}{d}-1}\choose {\frac{n}{d}}}.
    \end{equation*}

    \noindent Therefore, since there are $\phi(d)$ $n$-th roots of unity which are primitive $d$-th roots of unity,

    \begin{equation*}
        F(n) = \frac{1}{n} \sum_{d | n} {{2\frac{n}{d}-1} \choose {\frac{n}{d}}} \phi\left(d\right) = \frac{1}{n} \sum_{d | n} {{2d-1} \choose d} \phi\left(\frac{n}{d}\right).
    \end{equation*}
\end{proof}


\begin{definition}\label{z_i}
    Fix $\gamma, n \in \mathbb{Z}_{>0}^{odd}$ and $N\in \mathbb{Z}_{>0}$.  Suppose further $n$ starts an integral loop of length $N$.  Let $B = \{0,1\}^{**}$ such that $B = (\chi_\gamma^i(n))_{\geq 0}$.  Let $\rho$ and $\nu$ be the count of the number of zeros and ones in a single period of $B$.  Let the quantities $m_j$ be given by Proposition \ref{integral_loop_formula}.  Let $z_1,...,z_\nu$ be defined by,

    \begin{equation}\label{z_def}
        z_i = 2^{\frac{1}{\nu}\sum_{k=1}^{\nu-1}k m_{|i+k-2|_\nu+1}}.
    \end{equation}

    \noindent Where $|i+k-2|_\nu$ is $i+k-2$ modulo $\nu$.  Then we define the circulant matrix $\mathcal{R}_n$ associated to $n$ as,

    \begin{equation}\label{r_n}
        \mathcal{R}_n = \mathcal{R}(z_1,...,z_\nu) = \sum_{j=1}^\nu z_j \mathcal{P}^{j-1}.
    \end{equation}
\end{definition}

\begin{example}
    Consider the integral loop induced by $n= 19$ and $\gamma = 485$.  In this case a single period of $B = 101010000000$ with $m_1 = 1, m_2 = 1$ and $m_3= 7$.  Solving for the coefficients of $z_1,z_2$, and $z_3$ we have:

    \begin{equation*}
        \begin{aligned}
            z_1 &= 2^{\frac{m_1 + 2m_2}{3}} = 2^{\frac{1 + 2\times1}{3}} = 2, \\
            z_2 &= 2^{\frac{m_2 + 2m_3}{3}} = 2^{\frac{1 + 2\times7}{3}} = 32, \\
            z_3 &= 2^{\frac{m_3 + 2m_1}{3}} = 2^{\frac{7 + 2\times1}{3}} = 8. \\
        \end{aligned}
    \end{equation*}

    \noindent Finally $\mathcal{R}_{19}$ appears as,

    \begin{equation*}
        \mathcal{R}_{19} = \mathcal{R}(2,32,8) = 
        \begin{bmatrix}
            2 & 32 & 8 \\
            8 & 2 & 32 \\
            32 & 8 & 2
        \end{bmatrix}.
    \end{equation*}
\end{example}

For the following, let $S_n$ be the symmetric group acting on $n$ elements.

\begin{lemma}\label{a_b}\textit{
    Fix $n\in\mathbb{Z}_{>0}$.  Let $a_0,...,a_{n-1},b_0,...,b_{n-1}\in\mathbb{R}_{>0}$.  Define $\mathcal{R}(a_0,...,a_{n-1})$ as by Equation (\ref{circulant_general}) and $\mathcal{M}(b_0,...,b_{n-1})$ by Equation (\ref{gen_m}) below,}

\begin{equation}\label{gen_m}
    \mathcal{M}(b_0,...,b_{n-1}) = 
    \begin{bmatrix}
        1 & b_0 & b_0 b_1 & ... & b_0 b_1 ... b_{n-2} \\
        1 & b_1 & b_1 b_2 & ... & b_1 b_2 ... b_{n-1} \\
        \vdots & \vdots & \ddots & \vdots \\
        1 & b_{n-2} & b_{n-2} b_{n-1} & ... & b_{n-2} b_n ... b_{n-4} \\
        1 & b_{n-1} & b_{n-1} b_0 & ... & b_{n-1} b_0 ... b_{n-3}
    \end{bmatrix}.
\end{equation}

\noindent \textit{Then there is a one to one correspondence between the polynomial terms that appear in the determinant of $\mathcal{R}(a_0,...,a_{n-1})$ and $\mathcal{M}(b_0,...,b_{n-1})$.}
\end{lemma}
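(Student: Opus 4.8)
The plan is to expand both determinants with the Leibniz (generalized diagonal) formula and to exhibit the correspondence at the level of permutations, since both matrices have their diagonals indexed by the symmetric group $S_n$. Reading the entries off the definitions, the $(i,k)$ entry of $\mathcal{R}(a_0,\dots,a_{n-1})$ is $a_{|k-i|_n}$, and the $(i,k)$ entry of $\mathcal{M}(b_0,\dots,b_{n-1})$ is the staircase product $\prod_{r=0}^{k-1} b_{|i+r|_n}$ (the empty product $1$ when $k=0$), where rows and columns are indexed from $0$ to $n-1$ and $|\cdot|_n$ denotes reduction modulo $n$. The expansions are then
\[ \det \mathcal{R}(a_0,\dots,a_{n-1}) = \sum_{\sigma \in S_n} \mathrm{sgn}(\sigma) \prod_{i=0}^{n-1} a_{|\sigma(i)-i|_n}, \]
\[ \det \mathcal{M}(b_0,\dots,b_{n-1}) = \sum_{\sigma \in S_n} \mathrm{sgn}(\sigma) \prod_{i=0}^{n-1} \prod_{r=0}^{\sigma(i)-1} b_{|i+r|_n}. \]

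First I would define the correspondence $\Phi$ sending the term of $\det \mathcal{R}$ indexed by a permutation $\sigma$ to the term of $\det \mathcal{M}$ indexed by the same $\sigma$. Because both sums run over the identical index set $S_n$ and attach the same factor $\mathrm{sgn}(\sigma)$ to the $\sigma$-term, $\Phi$ is at once a sign-preserving bijection of the $n!$ diagonal contributions. To make this intrinsic rather than an artifact of how the sums are written, I would record the monomial each $\sigma$ produces: in $\mathcal{R}$ the exponent of $a_\ell$ is $\#\{\, i : |\sigma(i)-i|_n = \ell \,\}$, and in $\mathcal{M}$ the exponent of $b_\ell$ is $\#\{\, i : |\ell - i|_n < \sigma(i)\,\}$. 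Both depend on $\sigma$ alone, so $\Phi$ pairs a genuinely $\sigma$-determined monomial on each side.

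I would then stress what the correspondence is and is not. It is a bijection between the $n!$ signed diagonal terms, not an identity of monomials: every $\mathcal{R}$-term has total degree $n$ whereas every $\mathcal{M}$-term has degree $\binom{n}{2}$, and collecting like terms behaves differently on the two sides. For instance at $n=3$ the three transpositions all give $a_0 a_1 a_2$ in $\mathcal{R}$ but the three distinct monomials $b_0^2 b_2,\, b_0 b_1^2,\, b_1 b_2^2$ in $\mathcal{M}$, while the identity and the two $3$-cycles give the distinct cubes $a_0^3,\, a_1^3,\, a_2^3$ but the single common monomial $b_0 b_1 b_2$. This is why the correspondence is cleanest at the diagonal level; the number of distinct collected monomials on either side is governed by the admissible exponent tuples enumerated in Theorem \ref{combo}.

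The main obstacle is therefore not the construction of $\Phi$, which is forced, but the careful reconciliation of the two index conventions—the cyclic difference $|\sigma(i)-i|_n$ controlling $\mathcal{R}$ versus the contiguous block $\{\, i, i+1, \dots, i+\sigma(i)-1\,\}$ that a diagonal of $\mathcal{M}$ sweeps out—and confirming that each diagonal contributes a well-defined, correctly signed term so that no spurious cancellation disturbs the pairing. With $\Phi$ in hand, the payoff I am aiming at is that under the substitution $a_i = z_i$ and $b_j = 2^{m_j}$ of Definition \ref{z_i} the two terms paired by $\Phi$ agree in magnitude for every $\sigma$, which is precisely the per-diagonal identity that will deliver $|\det \mathcal{M}_n| = |\det \mathcal{R}_n|$ in the subsequent proposition.
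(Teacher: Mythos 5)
Your map $\Phi$ does not prove the lemma, because pairing the $\sigma$-term of one Leibniz expansion with the $\sigma$-term of the other is a bijection that exists between \emph{any} two $n\times n$ matrices; it carries no information specific to $\mathcal{R}$ and $\mathcal{M}$, whereas the ``polynomial terms'' the lemma (and its downstream use in Proposition \ref{dets_r_m}) refers to are the collected monomials $a_0^{y_0}\cdots a_{n-1}^{y_{n-1}}$ and $b_0^{w_0}\cdots b_{n-1}^{w_{n-1}}$. Worse, your closing claim --- that under the substitution $a_i = z_{i+1}$, $b_j = 2^{m_{j+1}}$ the two terms paired by $\Phi$ agree in magnitude for every $\sigma$ --- is false, and your own $n=3$ computation already refutes it: $\Phi$ pairs the identity term $a_0^3$ with the identity diagonal $b_0b_1b_2$, and for $\nu = 3$ one has $z_1^3 = 2^{\,m_1+2m_2}$ while $b_0b_1b_2 = 2^{\,m_1+m_2+m_3}$; taking the paper's loop $n=19$, $\gamma = 485$ (so $m_1=m_2=1$, $m_3=7$) these are $8$ and $512$. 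Indeed, since the positive terms of $\det\mathcal{R}$ are the three \emph{distinct} cubes $a_0^3, a_1^3, a_2^3$ while the positive terms of $\det\mathcal{M}$ are three \emph{copies} of $b_0b_1b_2$ (a mismatch you yourself observed), no pairing indexed by $\sigma$ alone can match values termwise; any workable correspondence must operate after like terms are collected.

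That is what the paper's proof does, and it is the missing idea here. It characterizes which monomials $a^y$ actually occur via the admissibility conditions $\sum_j y_j = n$ and $\sum_j j\,y_j \equiv 0 \pmod n$ (Theorem \ref{combo}), and then exhibits the explicit invertible linear transformation of Equation (\ref{circulant_proof}) carrying each admissible exponent vector $(y_j)$ to the exponent vector $(w_j)$ of a diagonal monomial of $\mathcal{M}$ --- for instance $a_k^n \mapsto \prod_{r=1}^{n-1} b_{|k+r-1|_n}^{\,r}$ and $a_0a_1\cdots a_{n-1} \mapsto b_0b_1\cdots b_{n-1}$. It is exactly this exponent-level map that the fractional weights $\tfrac{1}{\nu}\sum_{k} k\,m$ in Definition \ref{z_i} are built to invert: with $b_j = 2^{m_{j+1}}$ one checks $z_{k+1}^{\nu} = 2^{\sum_{r=1}^{\nu-1} r\, m_{|k+r-1|_\nu+1}} = \prod_{r=1}^{\nu-1} b_{|k+r-1|_\nu}^{\,r}$, so monomials paired by this map (not by yours) become equal powers of two, which is what Proposition \ref{dets_r_m} needs. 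To repair your argument, replace the permutation-level $\Phi$ by this bijection on exponent vectors, and note that for the determinant identity you must additionally compare the signed multiplicities with which each collected monomial occurs on the two sides --- the very issue your per-$\sigma$ scheme was intended to finesse but, as the $n=3$ case shows, cannot.
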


\begin{proof}
    Let $r_{i,j}$ be the element of matrix $\mathcal{R}(a_0,...,a_{n-1})$ and $s_{i,j}$ be the element of matrix $\mathcal{M}(b_0,...,b_{n-1})$ at the $i$-th row and $j$-th column, respectively.  Recall the definition of the determinant.

    \begin{equation*}
        \begin{aligned}
            \det(\mathcal{R}(a_0,...,a_{n-1})) &= \sum_{\sigma \in S_n}sgn(\sigma) \prod_{i = 1}^{n} r_{i,\sigma(i)}, \\
            \det(\mathcal{M}(b_0,...,b_{n-1})) &= \sum_{\sigma \in S_n}sgn(\sigma) \prod_{i = 1}^{n} s_{i,\sigma(i)}. \\
        \end{aligned}
    \end{equation*}

    \noindent Fix $\sigma \in S_n$.  Then for some $A, A'\in\mathbb{Z}$, we have $A r_{1,\sigma(1)}...r_{n,\sigma(n)} = A a_0^{y_0}...a_{n-1}^{y_{n-1}}$ for some $y_0,...,y_{n-1} \in \mathbb{Z}_{\geq 0}$ and $A' s_{1,\sigma(1)}...s_{n,\sigma(n)} = A' b_0^{w_0}...b_{n-1}^{w_{n-1}}$ for some $w_0,...,w_{n-1} \in \mathbb{Z}_{\geq 0}$.  The former is a polynomial of degree $n$, and the latter is a polynomial of degree $n(n-1)/2$.
    Consider first the permutations $\sigma_k(i) = i+k$ for $ 0 \leq k\leq n-1$.  We find that each corresponds to the monomial $a_{k}^n$.  Let $\theta \in S_n$ be the permutation $(1,n)(2,n-1)...(\frac{n+1}{2},\frac{n+1}{2})$ if $n$ is odd and $(1,n)(2,n-1)...(\frac{n}{2}-1,\frac{n}{2}+1)$ if $n$ is even.  We see that for all $\sigma \in S_n$, $\theta \circ \sigma (i) = n+1-\sigma(i)$.  As each permutation is an automorphism of $S_n$, then we can freely choose to compose $\theta$ and $\sigma$ in the definition of the determinant for $\mathcal{M}(b_0,...,b_{n-1})$ without changing the result.  We find that the matrix determinants of $\mathcal{R}(a_0,...,a_{n-1})$ and $\mathcal{M}(b_0,...,b_{n-1})$ have a one to one correspondence if and only if for each $a_k$ $(0\leq k \leq n-1)$ we have:

    \begin{equation*}
        a_k^n = b_{|n-k|_n}^1b_{|n+1-k|_n}^2b_{|n+2-k|_n}^3...b_{|n-1-k|_n}^{n-1}.
    \end{equation*}

    \noindent Where $|n-k|_n$ denotes $n-k$ modulo $n$.  Let $e_1,...,e_n$ and $\tilde{e}_1,...,\tilde{e}_n$ represent two bases for the space $\mathbb{R}^n$.  Then let us make the following substitution, $a_k^{y_k} \rightarrow y_{k-1} e_k$ and $b_k^{w_k} \rightarrow w_{k-1} \tilde{e}_k$.  We find that the exponents of each polynomial term associated to $\mathcal{R}(a_0,...,a_{n-1})$ corresponds to a polynomial term associated to $\mathcal{M}(b_0,...,b_{n-1})$ by the following linear transformation:

    \begin{equation}\label{circulant_proof}
    \frac{1}{n} 
    \begin{bmatrix}
        1 & 0 & n-1 & n-2 & \dots & 2 \\
        2 & 1 & 0 & n-1 & \dots & 3 \\
        3 & 2 & 1 & 0 & \dots & 4 \\
        \vdots & \vdots & \vdots & \vdots & \ddots & \vdots \\
        0 & n-1 & n-2 & n-3 & \dots & 1
    \end{bmatrix}
    \begin{bmatrix}
        y_0 \\
        y_1 \\
        y_2 \\
        \vdots \\
        y_{n-1}
    \end{bmatrix} = \begin{bmatrix}
        w_0 \\
        w_1 \\
        w_2 \\
        \vdots \\
        w_{n-1}
    \end{bmatrix}.
\end{equation}
    
    \noindent The determinant of the above linear transformation is $n^{n-1}(n-1)/2$, which is non-vanishing.  Conversely, if we were to start with exponents of each polynomial term associated to $\mathcal{M}(b_0,...,b_{n-1})$ and relate them to the exponents of the polynomial terms of $\mathcal{R}(a_0,...,a_{n-1})$ we would find an analogous result with the matrix inverse of Equation (\ref{circulant_proof}).
    
    \begin{equation}\label{circulant_proof_2}\small
    \begin{bmatrix}
        y_0 \\
        y_1 \\
        y_2 \\
        \vdots \\
        y_{n-1}
    \end{bmatrix} = 
    \frac{2}{n(n-1)} 
    \begin{bmatrix}
        1 & 1 & \dots & 1 & \frac{2+n(n-1)}{2} & \frac{2-n(n-1)}{2} \\
        \frac{2-n(n-1)}{2} & 1 & \dots & 1 &1 &  \frac{2+n(n-1)}{2} \\
        \frac{2+n(n-1)}{2} & \frac{2-n(n-1)}{2} & \dots & 1 & 1 & 1 \\
        \vdots & \vdots & \ddots & \vdots & \vdots & \vdots \\
        1 & 1 & \dots & \frac{2+n(n-1)}{2} & \frac{2-n(n-1)}{2} & 1
    \end{bmatrix}
    \begin{bmatrix}
        w_0 \\
        w_1 \\
        w_2 \\
        \vdots \\
        w_{n-1}
    \end{bmatrix}.
\end{equation}
    

    \noindent However, by Theorem \ref{combo} that a necessary and sufficient condition for a polynomial term to exist in the permanent of $\mathcal{R}(a_0,...,a_{n-1})$ is that $\sum_i i y_i \equiv 0 \pmod n$.  Call the matrix in Equation (\ref{circulant_proof_2}), $\theta_{j,k}$.  Thus we find that,

    \begin{equation*}
    \sum_{j=0}^{n-1} y_j = n, \hspace{0.5cm} \sum_{j=0}^{n-1} jy_j \equiv 0 \pmod n ,
    \end{equation*}

    \noindent if and only if,

    \begin{equation*}
    \sum_{j=0}^{n-1} \left( \sum_{k=0}^{n-1} \theta_{j+1,k+1} w_k \right) = n, \hspace{0.5cm} \sum_{j=0}^{n-1} j\left( \sum_{k=0}^{n-1} \theta_{j+1,k+1} w_k \right) \equiv 0 \pmod n .
    \end{equation*}

    \noindent  As $\theta_{j,k}$ is invertible, we conclude that there is a one to one correspondence between the polynomial terms that appear in the determinant of $\mathcal{R}(a_0,...,a_{n-1})$ and $\mathcal{M}(b_0,...,b_{n-1})$.
\end{proof}

\begin{proposition}\label{dets_r_m}
    \textit{Fix $\gamma, n \in \mathbb{Z}_{>0}^{odd}$ and $N\in \mathbb{Z}_{>0}$.  Suppose further $n$ starts an integral loop of length $N$.  Let $B = \{0,1\}^{**}$ such that $B = (\chi_\gamma^i(n))_{\geq 0}$.  Let $\rho$ and $\nu$ be the count of the number of zeros and ones in a single period of $B$.  Let $\mathcal{M}_n$ be constructed as by Equation (\ref{m_n}).  Let $\mathcal{R}_n$ be constructed by Definition \ref{z_i}.  Then the following equality is satisfied,}

    \begin{equation*}
        \left|\det(\mathcal{M}_n)\right| = \left|\det(\mathcal{R}_n)\right|.
    \end{equation*}
\end{proposition}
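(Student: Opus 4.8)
The plan is to route the whole computation through Lemma \ref{a_b}, which already supplies a term-by-term correspondence between $\det(\mathcal{R}(a_0,\dots,a_{\nu-1}))$ and $\det(\mathcal{M}(b_0,\dots,b_{\nu-1}))$ for the cumulative-product matrix of Equation (\ref{gen_m}), valid precisely when the $a_k$ and $b_i$ obey the $\nu$-th root relation written in its proof. Thus the task reduces to two identifications: realize $\mathcal{M}_n$ as an instance of $\mathcal{M}(b_0,\dots,b_{\nu-1})$, and realize $\mathcal{R}_n$ as the circulant $\mathcal{R}(a_0,\dots,a_{\nu-1})$ attached to those $b_i$ by the lemma. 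Once both are in place the equality $|\det(\mathcal{M}_n)|=|\det(\mathcal{R}_n)|$ drops out.

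First I would pin down the entries of $\mathcal{M}_n$. By construction $\sum_{j=1}^\nu e_{j,k}$ is the matrix whose only nonzero column is the $k$-th (all ones), so $\mathcal{D}_n^{k-1}\sum_{j=1}^\nu e_{j,k}$ deposits the vector of row sums of $\mathcal{D}_n^{k-1}$ into column $k$ of $\mathcal{M}_n$. Proposition \ref{exp_rule} shows each row of $\mathcal{D}_n^{k-1}$ carries a single nonzero entry, so its row sum is that entry and $(\mathcal{M}_n)_{j,k} = 2^{\sum_{r=0}^{k-2} m_{|j+r-1|_\nu+1}}$. Putting $b_i = 2^{m_{|i|_\nu+1}}$ for $0\le i\le\nu-1$ and matching the $j$-th row to the index $i=j-1$ turns this into $\prod_{r=0}^{k-2} b_{|j-1+r|_\nu}$, which is exactly the $(j,k)$ entry of $\mathcal{M}(b_0,\dots,b_{\nu-1})$ in (\ref{gen_m}). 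Hence $\mathcal{M}_n=\mathcal{M}(b_0,\dots,b_{\nu-1})$.

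Next I would solve the $\nu$-th root relation of Lemma \ref{a_b} for the associated $a_k$. Substituting $b_i=2^{m_{|i|_\nu+1}}$ gives $a_k = 2^{\frac{1}{\nu}\sum_{s=1}^{\nu-1} s\,m_{|\nu-1+s-k|_\nu+1}}$, and comparing the $m$-arguments coefficient by coefficient against Equation (\ref{z_def}) shows $a_k=z_i$ exactly when $k\equiv 1-i \pmod\nu$. In circulant terms this says $\mathcal{R}_n=\sum_{j=1}^\nu z_j\mathcal{P}^{j-1}=\sum_{k=0}^{\nu-1} a_k\mathcal{P}^{-k}=\big(\sum_{k=0}^{\nu-1} a_k\mathcal{P}^{k}\big)^{\!T}=\mathcal{R}(a_0,\dots,a_{\nu-1})^{T}$, using $\mathcal{P}^{-1}=\mathcal{P}^{T}$. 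Therefore $\det(\mathcal{R}_n)=\det(\mathcal{R}(a_0,\dots,a_{\nu-1}))$, and Lemma \ref{a_b} gives $|\det(\mathcal{M}(b_0,\dots,b_{\nu-1}))|=|\det(\mathcal{R}(a_0,\dots,a_{\nu-1}))|$; chaining the two identities yields $|\det(\mathcal{M}_n)|=|\det(\mathcal{R}_n)|$.

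The main obstacle will be the index bookkeeping that pins $a_k$ to $z_i$: one must track the three competing conventions (the $0$-indexed $a_k,b_i$, the $1$-indexed $z_i,m_j$, and the cyclic reductions $|\cdot|_\nu$) carefully enough to confirm the clean relation $k\equiv 1-i$, and thus that $\mathcal{R}_n$ is literally the transpose of the lemma's circulant rather than some scrambled relabeling. A secondary point to state with care is that Lemma \ref{a_b} matches the monomials of the two determinants: I would verify that corresponding terms agree in numeric value (which is exactly what the $\nu$-th root relation guarantees once $b_i,a_k>0$), so that the two determinants coincide up to an overall sign and the absolute-value statement follows.
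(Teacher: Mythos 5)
Your proposal is correct and takes essentially the same route as the paper: its proof of this proposition also runs everything through Lemma \ref{a_b}, matching the base-two monomials of the two determinants and observing that the exponent vectors singled out by that lemma produce exactly the $z_i$ of Definition \ref{z_i}. Your write-up merely makes explicit the bookkeeping the paper leaves implicit --- the identification $b_i = 2^{m_{|i|_\nu+1}}$ realizing $\mathcal{M}_n$ as $\mathcal{M}(b_0,\dots,b_{\nu-1})$, the solved relation $a_k = z_i$ for $k \equiv 1-i \pmod{\nu}$, and the transpose observation $\mathcal{R}_n = \mathcal{R}(a_0,\dots,a_{\nu-1})^{T}$ (harmless since determinants are transpose-invariant) --- without changing the underlying argument.
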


\begin{proof}
    Fix $\sigma \in S_\nu$.  Let $\theta \in S_n$ be the permutation $(1,\nu)(2,\nu-1)...(\frac{\nu+1}{2},\frac{\nu+1}{2})$ if $\nu$ is odd and $(1,\mu)(2,\nu-1)...(\frac{\nu}{2}-1,\frac{\nu}{2}+1)$ if $\nu$ is even.  Following from Lemma \ref{a_b} we recall that the polynomial terms of $\det(\mathcal{M}_n)$ and $\det(\mathcal{R}_m)$ have a one to one correspondence.  For a single polynomial term $\mathcal{R}_n$, we have $a_0^{y_0}...a_{\nu-1}^{y_{\nu-1}} = 2^{y_0m_1+y_1m_2+...+y_{\nu-1}m_\nu}$, and its corresponding polynomial term chosen by $\theta \circ \sigma$ in $\mathcal{M}_n$, $b_0^{w_0}...b_{\nu-1}^{w_{\nu-1}} = 2^{w_0 m_1+w_1m_2+...+w_{\nu-1}m_\nu}$.  The base of both polynomials is two, allowing us to compare the exponents directly.  The non-vanishing terms of $\det(\mathcal{R}_n)$ have exponents which coincide with the solutions to the following system of equations:

    \begin{equation}\label{sys_y}
        \begin{aligned}
            \sum_{j=1}^\nu j y_j \equiv 0 &\pmod \nu, \\
            \sum_{j=1}^\nu y_j = \nu.
        \end{aligned}
    \end{equation}

    \noindent The $y_0,...,y_{\nu-1}$ satisfying Equation (\ref{sys_y}) identically match Definition \ref{z_i}, and thus directly correspond to $w_0,...,w_{\nu-1}$.  Thus the polynomial term, apart from a sign change, is identical.  Thus $\left|\det(\mathcal{M}_n)\right| = \left|\det(\mathcal{R}_n)\right|$.
\end{proof}

 \begin{example}
    Consider the integral loop induced by $n= 19$ and $\gamma = 485$.  We find that,

        \begin{equation*}
            \det(\mathcal{M}_{19}) = \left| \begin{matrix}
                1 & 2 & 4 \\
                1 & 2 & 256 \\
                1 & 128 & 256
            \end{matrix} \right| = -31752, \hspace{0.25cm} \det(\mathcal{R}_{19}) = 
        \left|\begin{matrix}
            2 & 32 & 8 \\
            8 & 2 & 32 \\
            32 & 8 & 2
        \end{matrix}\right| = 31752.
        \end{equation*}

    Consider the integral loop induced by $n= 19$ and $\gamma = 101$.  We find that,

    \begin{equation*}
            \det(\mathcal{M}_{19}) = \left| \begin{matrix}
                1 & 2 & 4 \\
                1 & 2 & 64 \\
                1 & 32 & 64
            \end{matrix} \right| = -1800, \hspace{0.25cm} \det(\mathcal{R}_{19}) = 
        \left|\begin{matrix}
            2 & 2^{11/3} & 2^{7/3} \\
            2^{7/3} & 2 & 2^{11/3} \\
            2^{11/3} & 2^{7/3} & 2
        \end{matrix}\right| = 1800.
        \end{equation*}
    \end{example}

\begin{corollary}
    \textit{Fix $\gamma, n \in \mathbb{Z}_{>0}^{odd}$ and $N\in \mathbb{Z}_{>0}$.  Suppose further $n$ starts an integral loop of length $N$.  Let $B = \{0,1\}^{**}$ such that $B = (\chi_\gamma^i(n))_{\geq 0}$.  Let $\rho$ and $\nu$ be the count of the number of zeros and ones in a single period of $B$.  Let $\mathcal{M}_n$ be constructed as by Equation (\ref{m_n}).  Let $\mathcal{R}_n$ and be constructed by Definition \ref{r_n}.  Let $z_j$ be defined as by Equation (\ref{z_def}).  Finally let $\epsilon$ be the $n$-th root of unity.  Then the determinant\cite{LEHMER197343} of $\mathcal{M}_n$ can be expressed in the following formula:}

    \begin{equation}\label{circ_det}
        \left|\det(\mathcal{M}_n)\right| = \left|\prod_{k=0}^{\nu-1}\sum_{h=0}^{\nu-1} z_{h+1}\epsilon^{kh}\right|.
    \end{equation}
\end{corollary}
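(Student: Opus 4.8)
The plan is to assemble the corollary directly from two results already in hand: the equality of determinant magnitudes $|\det(\mathcal{M}_n)| = |\det(\mathcal{R}_n)|$ from Proposition \ref{dets_r_m}, and the closed-form circulant determinant recorded in Equation (\ref{circ_det_gen}). Since $\mathcal{R}_n$ is by construction the circulant matrix $\mathcal{R}(z_1,\dots,z_\nu)$ built from the weights $z_j$ of Equation (\ref{z_def}), there is no new matrix to analyze; the corollary merely restates $\det(\mathcal{R}_n)$ via the product of its eigenvalues and transports the result back to $\mathcal{M}_n$ through Proposition \ref{dets_r_m}.

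First I would invoke Proposition \ref{dets_r_m} to reduce the claim to computing $|\det(\mathcal{R}_n)|$. Next I would apply the circulant determinant formula of Equation (\ref{circ_det_gen}) to $\mathcal{R}(z_1,\dots,z_\nu)$. The only point demanding care is the index convention: Equation (\ref{circ_det_gen}) is stated for a circulant $\mathcal{R}(x_0,\dots,x_{m-1})$ indexed from $0$, whereas $\mathcal{R}_n$ is written with entries $z_1,\dots,z_\nu$ indexed from $1$. I would therefore set $x_h = z_{h+1}$ for $0 \leq h \leq \nu-1$ and take $\epsilon = e^{2\pi i/\nu}$ to be a primitive $\nu$-th root of unity (the matrix is $\nu\times\nu$, so the pertinent roots of unity are the $\nu$-th roots rather than the $n$-th). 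With this substitution Equation (\ref{circ_det_gen}) yields
\begin{equation*}
\det(\mathcal{R}_n) = \prod_{k=0}^{\nu-1}\left(\sum_{h=0}^{\nu-1} z_{h+1}\,\epsilon^{kh}\right).
\end{equation*}

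Finally I would take absolute values on both sides and chain the two equalities, obtaining
\begin{equation*}
|\det(\mathcal{M}_n)| = |\det(\mathcal{R}_n)| = \left|\prod_{k=0}^{\nu-1}\sum_{h=0}^{\nu-1} z_{h+1}\,\epsilon^{kh}\right|,
\end{equation*}
which is precisely Equation (\ref{circ_det}). I expect essentially no obstacle beyond the bookkeeping of the shift between the $1$-indexed $z_j$ and the $0$-indexed $x_h$: the genuine content was already discharged in Proposition \ref{dets_r_m}, whose proof established the nontrivial one-to-one correspondence between the determinant terms of $\mathcal{M}_n$ and $\mathcal{R}_n$, and in the derivation of Equation (\ref{circ_det_gen}), which diagonalizes the circulant against the Fourier/Vandermonde basis. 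One may also note that the right-hand product is manifestly real and nonnegative after taking the modulus, consistent with $\det(\mathcal{M}_n)$ being an integer.
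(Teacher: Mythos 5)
Your proposal matches the paper's proof exactly: the paper likewise derives Equation (\ref{circ_det}) by combining Proposition \ref{dets_r_m} with the circulant determinant formula of Equation (\ref{circ_det_gen}). Your added care with the index shift $x_h = z_{h+1}$ and your reading of $\epsilon$ as a primitive $\nu$-th root of unity (correcting the statement's ``$n$-th'') are both sound and, if anything, slightly more precise than the paper's one-line justification.
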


\begin{proof}
    This follows from Proposition \ref{dets_r_m} and Equation \ref{circ_det_gen}.
\end{proof}

Now that we have shown that the determinants of $\mathcal{M}_n$ and $\mathcal{R}_n$ have identical prime factorization, we can utilize the following lemma.  Assume a priori $z_1,...z_n \in \mathbb{Z}$.

\begin{lemma}\label{prime}\textit{
    Let $n$ and $p$ be distinct primes such that the residue class $p \pmod n$ generates the whole group $\mathbb{Z}_n^{*}$.  Then for any circulant $\mathcal{R}(z_1,...,z_n) = z_1\mathcal{P}^0+z_2\mathcal{P}^1+...z_n\mathcal{P}^{n-1}$ with $z_1,...,z_n \in \mathbb{Z}$, the condition $p | \det(\mathcal{R}(z_1,...,z_n))$ is satisfied if and only if either $p|(z_1+...+z_n)$ or $z_1 \equiv ... \equiv z_n \pmod p$.}\cite{SBURLATI2010100}
\end{lemma}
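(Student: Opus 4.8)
The plan is to exploit the factorization of $\det(\mathcal{R}(z_1,\dots,z_n))$ over the $n$-th roots of unity that we established in Equation (\ref{circ_det_gen}), combined with the hypothesis that $p \pmod n$ generates $\mathbb{Z}_n^*$. First I would invoke Lemma \ref{w_zero_char_p}: since $p$ and $n$ are distinct primes, $p \nmid n$, so $p \mid \det(\mathcal{R}(z_1,\dots,z_n))$ if and only if the associated polynomial $f(w) = z_1 + z_2 w + \dots + z_n w^{n-1}$ and $w^n - 1$ share a common root in the algebraic closure $\overline{\mathbb{F}_p}$. The roots of $w^n - 1$ over $\overline{\mathbb{F}_p}$ are exactly the $n$-th roots of unity; since $\gcd(p,n)=1$ these are distinct, and they consist of $w=1$ together with the primitive $n$-th roots of unity (there being $n-1$ of them, as $n$ is prime).

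**Using the Frobenius/generator hypothesis.**

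The crucial structural step is to analyze the two cases for the shared root. If the common root is $w = 1$, then $f(1) = z_1 + \dots + z_n \equiv 0 \pmod p$, which is precisely the first disjunct. If instead the common root is a primitive $n$-th root of unity $\zeta$, then I would argue that \emph{all} primitive $n$-th roots must simultaneously be roots of $f$. The reason is the hypothesis that $p$ generates $\mathbb{Z}_n^*$: the Frobenius automorphism $x \mapsto x^p$ on $\overline{\mathbb{F}_p}$ permutes the primitive $n$-th roots of unity, and because $p$ generates $\mathbb{Z}_n^*$, the orbit of $\zeta$ under Frobenius is the entire set of $n-1$ primitive roots. Since $f$ has coefficients in $\mathbb{F}_p$, it is fixed by Frobenius, so if $f(\zeta) = 0$ then $f(\zeta^{p^k}) = 0$ for all $k$, forcing $f$ to vanish at every primitive $n$-th root of unity. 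Equivalently, the $n-1$-degree cyclotomic polynomial $\Phi_n(w) = 1 + w + \dots + w^{n-1}$ (using that $n$ is prime) divides $f(w)$ modulo $p$.

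**Translating the divisibility condition.**

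Because $\deg f \le n-1 = \deg \Phi_n$, the divisibility $\Phi_n(w) \mid f(w)$ in $\mathbb{F}_p[w]$ means $f(w) \equiv c\,\Phi_n(w) \pmod p$ for some constant $c \in \mathbb{F}_p$, i.e. $z_1 \equiv z_2 \equiv \dots \equiv z_n \pmod p$, which is the second disjunct. Conversely, I would verify that each disjunct forces a shared root: if $z_1 + \dots + z_n \equiv 0$ then $w=1$ is a common root; if all $z_i$ are congruent mod $p$ then $f \equiv c\,\Phi_n$ vanishes at every primitive root, giving a common root (here one must note that if $c \equiv 0$ the matrix is the zero matrix mod $p$ and the determinant trivially vanishes). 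Assembling both directions with Lemma \ref{w_zero_char_p} yields the stated equivalence.

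**Anticipated obstacle.**

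The main subtlety — and the step I would treat most carefully — is the orbit argument showing that a single primitive root being a zero of $f$ forces all of them to be zeros. This is where the generation hypothesis $\langle p \rangle = \mathbb{Z}_n^*$ does the real work, and it is essential that the coefficients $z_i$ lie in $\mathbb{F}_p$ so that $f$ is Frobenius-stable; one must also handle the degenerate boundary cases (the zero polynomial mod $p$, and reconciling a degree-$(n-1)$ divisor dividing a polynomial of degree at most $n-1$) cleanly rather than glossing over them. The remainder is a routine translation between roots of unity and the two explicit congruence conditions.
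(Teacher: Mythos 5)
Your proof is correct and takes essentially the same route as the paper's: both reduce via Lemma \ref{w_zero_char_p} to whether $f(w)$ and $w^n-1$ share a root in the algebraic closure of $\mathbb{F}_p$, split into the cases $w=1$ versus a primitive $n$-th root, and check sufficiency by direct evaluation of the factors in Equation (\ref{circ_det_gen}). If anything, your Frobenius-orbit argument (equivalently, the irreducibility of $1+w+\cdots+w^{n-1}$ over $\mathbb{F}_p$ when $p$ generates $\mathbb{Z}_n^{*}$) spells out the key necessity step that the paper's proof leaves implicit and defers to the cited reference, so your write-up is, in that respect, the more complete of the two.
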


\begin{proof}
    Suppose $p$ divides $\det(\mathcal{R}(z_1,...,z_n))$ and recall Equation \ref{circ_det_gen}.  Consider first the case $k=0$.  We have the term $(z_1+...+z_n)$.  If $p$ divides the sum, then we are done.  Else, if $z_1 \equiv ... \equiv z_n \pmod p$ then for $k=1$ we have $(z_1+z_2\epsilon+...+z_n\epsilon^{n-1}) \equiv z_1 (1+\epsilon+...+\epsilon^{n-1}) \equiv 0 \pmod p$.  On the other hand by Lemma \ref{w_zero_char_p}, we know that $p\mid \det(\mathcal{R}(z_1,...,z_n))$ if and only if $\gcd(f(w),w^n-1) \neq c (c\in \mathbb{Z}_p^{*})$.  Assuming that $p\nmid (z_1+...+z_n)$, then 1 is not a common root.  Thus $p\mid \det(\mathcal{R}(z_1,...,z_n))$ is equivalent to $\gcd(f(w),1+w+...+w^{n-1}) \neq c \in \mathbb{Z}_p[w] (c\in \mathbb{Z}_p^{*})$.  We conclude that $p | \det(\mathcal{R}(z_1,...,z_n))$ is satisfied if and only if either $p|(z_1+...+z_n)$ or $z_1 \equiv ... \equiv z_n \pmod p$.
\end{proof}

In our case, we are concerned about the contrapositive.  Moreover, for the theorem to be effective, we require that the entries of $\mathcal{R}_n$ are integers.  Thus we can construct the following theorem.

\begin{theorem}\label{central_theorem_X}
    \textit{Fix $\gamma, n \in \mathbb{Z}_{>0}^{odd}$ and $N\in \mathbb{Z}_{>0}$.  Suppose further $n$ starts an integral loop of length $N$.  Let $B = \{0,1\}^{**}$ such that $B = (\chi_\gamma^i(n))_{\geq 0}$.  Let $\rho$ and $\nu$ be the count of the number of zeros and ones in a single period of $B$.  Assume $\nu$ is prime.  Let the quantities $m_j$ from Proposition \ref{integral_loop_formula} satisfy Equation (\ref{z_i}) so that $z_1,...,z_\nu$ are integers.  Let $\mathcal{R}_n$ be constructed by Equation (\ref{r_n}). Let $p$ be a prime factor of $2^\rho - 3^\nu$ distinct from $\nu$ so that the residue class $p \pmod \nu$ generates the whole group $\mathbb{Z}_\nu^{*}$.  Then $p\nmid \det(\mathcal{M}_n)$ if and only if $p\nmid(z_1+...+z_\nu)$ and for some $1\leq i<j\leq \nu$ we have $z_i \not\equiv z_j \pmod p$.}
\end{theorem}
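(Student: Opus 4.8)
The plan is to obtain this statement as a near-immediate consequence of the two results that precede it: Proposition \ref{dets_r_m}, which identifies the determinants of $\mathcal{M}_n$ and $\mathcal{R}_n$ up to sign, and Lemma \ref{prime}, which characterizes exactly when a prime divides the determinant of an integer circulant. The hypotheses of the theorem have been arranged so that both tools apply directly: the standing assumption that the $m_j$ satisfy Equation (\ref{z_def}) with $z_1,\dots,z_\nu \in \mathbb{Z}$ guarantees that $\mathcal{R}_n = \mathcal{R}(z_1,\dots,z_\nu)$ is an integer circulant, which is exactly the setting in which Lemma \ref{prime} is valid.

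First I would invoke Proposition \ref{dets_r_m} to write $|\det(\mathcal{M}_n)| = |\det(\mathcal{R}_n)|$. Since divisibility by the prime $p$ is insensitive to a change of sign, $p \mid \det(\mathcal{M}_n)$ holds if and only if $p \mid \det(\mathcal{R}_n)$. This reduces the whole question to a statement about the integer circulant $\mathcal{R}_n$, where Lemma \ref{prime} can be brought to bear.

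Next I would verify the hypotheses of Lemma \ref{prime}, with the circulant size $\nu$ playing the role of the prime denoted $n$ there: by assumption $\nu$ is prime, $p$ is a prime distinct from $\nu$, the residue class of $p$ modulo $\nu$ generates $\mathbb{Z}_\nu^{*}$, and $z_1,\dots,z_\nu\in\mathbb{Z}$. Lemma \ref{prime} then gives that $p \mid \det(\mathcal{R}_n)$ if and only if $p \mid (z_1 + \cdots + z_\nu)$ or $z_1 \equiv \cdots \equiv z_\nu \pmod p$. Taking the contrapositive and applying De Morgan's law, $p \nmid \det(\mathcal{R}_n)$ holds exactly when $p \nmid (z_1 + \cdots + z_\nu)$ and it is not the case that all the $z_j$ are mutually congruent modulo $p$; this last negation is precisely the assertion that $z_i \not\equiv z_j \pmod p$ for some pair $1 \le i < j \le \nu$. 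Chaining this with the equivalence from the first step yields the stated biconditional.

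Because the theorem is essentially a dictionary translation between two already-proved results, the effort is bookkeeping rather than a genuine difficulty, and I do not anticipate a real obstacle. The one point deserving care is confirming that every hypothesis of Lemma \ref{prime} is actually in force: the integrality of the $z_j$ (an explicit assumption), the fact that $\nu$ and $p$ are distinct primes, and the coprimality $p \nmid \nu$ needed so that the root-of-unity argument underpinning Lemma \ref{prime} (through Lemma \ref{w_zero_char_p}) applies — this coprimality being automatic once $\nu$ is prime and $p \neq \nu$. I would also note in passing that the hypothesis $p \mid 2^\rho - 3^\nu$ is not invoked to establish the equivalence itself; rather, it records the arithmetically interesting regime in which such a prime $p$ is a candidate divisor of the integral loop, connecting this result back to Theorem \ref{mod_gamma_prime}.
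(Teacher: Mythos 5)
Your proposal matches the paper's proof exactly: the paper likewise cites Proposition \ref{dets_r_m} to identify $|\det(\mathcal{M}_n)|$ with $|\det(\mathcal{R}_n)|$ and then takes the contrapositive of Lemma \ref{prime}, with your hypothesis-checking (including the correct observation that $p \mid 2^\rho - 3^\nu$ plays no logical role in the equivalence) simply spelling out what the paper leaves implicit. No gap; nothing to change.
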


\begin{proof}
    Recall from Proposition \ref{dets_r_m} that $|\det(\mathcal{M}_n)| = |\det(\mathcal{R}_n)|$.  Then consider the contrapositive of Lemma \ref{prime}.
\end{proof}

\begin{example}
    Consider the integral loop induced by $n=19$ and $\gamma = 485$.  In this case $m_1 = 1, m_2 = 1$ and $m_3 = 7$ such that $2^9-3^3 = 485 = 5\times 97$.  We find that $z_1 = 2, z_2 = 32$ and $z_3 = 8$.  Thus $\mathcal{R}_{19}$ has all integer coefficients.  The prime factor 5 is in the residue class of 2, which generates the whole group $\mathbb{Z}_3^{*}$.  We find that $2+32+8 = 42 \equiv 2 \pmod {5}$.  Furthermore $2 \equiv 2 \pmod 5$ and $8 \equiv 3 \pmod 5$.  Thus by the theorem we conclude that $\det(\mathcal{M}_{19})$ is nonsingular modulo 5.  Finally $\det(\mathcal{M}_{19}) \not\equiv 0 \pmod {2^{9}-3^3}$.
\end{example}

\begin{example}
    Consider the integral loop induced by $n=184337$ and $\gamma = 1048333$.  In this case $m_1 = 6, m_2 = 2, m_3 = 6, m_4 = 3$ and $m_5 = 3$ such that $2^{20}-3^5 = 1048333 = 11\times 13\times 7331$.  We find that $z_1 = 256, z_2 = 128, z_3 = 512, z_4 = 128$ and $z_5 = 512$.  Thus $\mathcal{R}_{184337}$ has all integer coefficients.  The prime factors 11 and 7331 are both in the residue class of 1, and cannot be used. However, the residue class of 13 is 3, which indeed generates the whole group $\mathbb{Z}_5^{*}$.  We find that $256+128+512+128+512 = 1536 \equiv 2 \pmod {13}$.  Furthermore $256 \equiv 9 \pmod {13}$ and $128 \equiv 11 \pmod {13}$.  Thus by the theorem we conclude that $\det(\mathcal{M}_{184337})$ is nonsingular modulo 13.  Finally, $\det(\mathcal{M}_{184337}) \not\equiv 0 \pmod {2^{20}-3^5}$.
\end{example}

\chapter{Perspectives}


 \section{Discussion}

 Chapter five is broken up into four different discussions.  Each discussion is a segue into a topic related to the thesis, but not in the direct path towards Theorem \ref{central_theorem_X}.  Each also serves as an avenue which future research can be taken.  The first discussion is a complement to Examples (\ref{ex1}) and (\ref{ex2}).  We discuss first the general Exponential Diophatine Equation, then apply a combinatorical argument to give a bound to the ratio of ones and zeros in a finite binary sequence corresponding to an integral loop of the $3x+1$ Problem.  The second and third discussions are two different methods to map distinct integral loops of the $3x+\gamma$ problem into each other.   The fourth discussion demonstrates that the integral loop vector is the unique solution of an eigenvector problem with eigenvalue one.  
 

 \section{A Segue Into Exponential Diophantine Equations and Combinatorics}

 One could argue that this is the richest part of the thesis behind the thesis.  If one were to ask exactly in what manner any of the results here were derived, it would be by the virtue of Exponential Diophantine Equations.  Fix $a,b,x,y \in \mathbb{Z}_{>0}$.  Let the mapping $f:\mathbb{Z}_{>0}^4 \to \mathbb{Z}$ be given as,

\begin{equation*}
    f(x,y,a,b) = a^x-b^y.
\end{equation*}

\noindent Equations of this kind are exactly the ones which generate integral loops.  In our case we took $a =2$, $b=3$, $x=\rho$, and $y=\nu$.  In this thesis we consistently provided the starting integer $n$ and then chose $B\in\{0,1\}^{**}$ so that $B$ matched the characteristic trajectory of $n$.  However, that is not how one finds integral loops in practice.  It is much easier to pick $\rho$ and $\nu$ first, then choose an arrangement of ones and zeros generating $n$ by Equation (\ref{tilde_n}).  The number of integral loops follows a classic \textit{Stars and Bars} type problem\cite{aps}.  Let $\rho,\nu \in \mathbb{Z}_{>0}$ so that $2^\rho - 3^\nu \geq 0$.  Then the number of integral loops is bounded above by the following quantity:

\begin{equation*}
    \#\{\text{integral loops with $\rho$ zeros and $\nu$ ones}\} < {\rho - 1 \choose \nu-1} .
\end{equation*}

One could even suppose that the entirety of the Collatz Conjecture can be reduced down to a combinatorical question in an Exponential Diophantine Equation.  We will demonstrate the power of this kind of thinking with the following proposition:

\begin{proposition}\textit{Let $\gamma=1$.  Fix $n \in \mathbb{Z}_{>0}^{odd}$ and $N\in\mathbb{Z}_{>0}$.  Suppose $n$ starts an integral loop of length $N$.  Let $B\in \{0,1\}^{**}$ such that $B = (\chi^i(n))_{i\geq 0}$.  Let $\rho$ and $\nu$ be the count of the number of zeros and ones in a single period of $B$.  Then the ratio $\rho/\nu$ must satisfy the following inequality:}

    \begin{equation*}
        \frac{\ln(3)}{\ln(2)} < \frac{\rho}{\nu} \leq 2 .
    \end{equation*}
\end{proposition}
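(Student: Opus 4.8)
The plan is to reduce both inequalities to a single multiplicative identity obtained by traversing the loop once. Write $x_0 = n,\, x_1 = C^{i_1}(n),\,\dots,\,x_{\nu-1} = C^{i_{\nu-1}}(n)$ for the $\nu$ odd entries of the integral loop vector, indexed by the sequence $(i_j)$ of Definition \ref{index_seq}. Since $\gamma = 1$, passing from one odd entry to the next consists of a single application of $x \mapsto 3x+1$ followed by exactly $m_j$ halvings, these being precisely the $m_j$ zeros that Proposition \ref{telescope} places between consecutive ones. Thus $2^{m_j} x_j = 3 x_{j-1} + 1$ for $j = 1,\dots,\nu-1$, together with the wrap-around relation $2^{m_\nu} x_0 = 3 x_{\nu-1} + 1$ that closes the cycle.

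First I would multiply these $\nu$ relations together. The left-hand sides contribute $(\prod_j x_j)\, 2^{m_1 + \cdots + m_\nu}$, and since $m_0 = 0$ with $\sum_j m_j = \rho$ the power of two is exactly $2^\rho$; the right-hand sides contribute $\prod_{j=0}^{\nu-1}(3x_j+1)$. Cancelling the common factor $\prod_j x_j$, which is the one place the loop hypothesis does genuine work, yields the clean identity
\[
2^\rho = \prod_{j=0}^{\nu-1}\left(3 + \frac{1}{x_j}\right).
\]

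Next I would bound each factor. Because every $x_j$ is a positive odd integer we have $x_j \geq 1$, hence $3 < 3 + 1/x_j \leq 4$ for each $j$. Taking the product over the $\nu$ factors gives $3^\nu < 2^\rho \leq 4^\nu = 2^{2\nu}$, and taking logarithms base two yields exactly $\ln 3/\ln 2 < \rho/\nu \leq 2$. I would remark that the lower bound also follows directly from Proposition \ref{integral_loop_formula}: the numerator of the Integral Loop Formula is a sum of positive terms, so $n > 0$ forces the denominator $2^\rho - 3^\nu$ to be positive, giving $\rho/\nu > \ln 3/\ln 2$ at once. The upper bound, by contrast, is genuinely a consequence of periodicity.

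I do not expect a serious obstacle. The only points needing care are the index bookkeeping in the product, namely ensuring the wrap-around term $2^{m_\nu} x_0 = 3x_{\nu-1}+1$ is included so that the exponents sum to $\rho$ exactly rather than to $\rho$ plus or minus a boundary contribution, and the sharpness observation that equality $\rho/\nu = 2$ forces $3 + 1/x_j = 4$, hence $x_j = 1$, for every $j$. Since the odd entries of a minimal loop are distinct, this occurs only when $\nu = 1$ and $x_0 = 1$, that is, for the trivial cycle $\{1,4,2\}$, confirming that the upper bound is attained and attained only there.
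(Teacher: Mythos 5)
Your proposal is correct and follows essentially the same route as the paper: the paper likewise multiplies the $\nu$ cycle relations $2^{m_j}n_j = 3n_{j-1}+1$ into the identity $\prod_j (3n_j+1)/\prod_j n_j = 2^\rho$, takes the lower bound from the positivity of $2^\rho - 3^\nu$ in the Integral Loop Formula, and obtains the upper bound by noting the product is largest when every $n_j = 1$, giving $2^\rho \leq 4^\nu$. Your per-factor estimate $3 < 3 + 1/x_j \leq 4$ is a cleaner way to finish than the paper's expansion of the product into symmetric sums followed by a maximization argument, and it yields both bounds in one stroke, but the underlying decomposition and key identity are the same.
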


\begin{proof}
    The lower bound is a direct consequence of the Integral Loop Formula.  As for the upper bound, we will demonstrate by construction.  Let $\nu$ be a fixed positive integer.  We observe first that the ratio $\rho/\nu$ increases if and only if $2^\rho - 3^\nu$ increases.  Recall the indexing sequence $i_j$ of Equation (\ref{index_seq}).  In this case let $n_{j}$ denote the $(j-1)$th odd number in the integral loop starting with $n_1$.  Then we have the following system of equations:

    \begin{equation}
        \left\{\begin{array}{lll}
        2^{m_{j}} &= \frac{3n_j+1}{n_{j+1}}, & 1\leq j \leq \nu-1, \\
        2^{m_\nu} &= \frac{3n_{\nu-1}+1}{n_1}. &
        \end{array}\right.
    \end{equation}

\noindent Observe that the product of terms is $2^\rho$:

\begin{equation}
    \frac{(3n_1+1)(3n_2+1)... (3n_{\nu-1}+1)(3n_{\nu}+1)}{n_1 n_2 ... n_{\nu-1}n_{\nu}} = 2^\rho = (2^\rho-3^\nu)+3^\nu .
\end{equation}

\noindent We can reduce the product down to the following equation:

\begin{equation}\label{big_boy}
    2^\rho - 3^\nu = \frac{1+\sum_{i=1}^{\nu-1}\frac{3^i}{i!}\left( \sum_{\sigma \in S(\nu)} \prod_{k=1}^i n_{\sigma(i)}\right)}{\prod_{i=1}^\nu n_i} .
\end{equation}

\noindent Equation (\ref{big_boy}) is maximized when the denominator is minimized.  Thus, the largest value of $2^\rho - 3^\nu$ is obtained when $n_1 = n_2 = ... n_\nu = 1$.  Equation (\ref{big_boy}) reduces to:

\begin{equation}
\begin{aligned}
    2^\rho - 3^\nu &= \sum_{i=0}^{\nu-1} 3^i {\nu \choose i}, \\
    2^\rho &= \sum_{i=0}^\nu 3^i {\nu \choose i} = 4^\nu = 2^{2\nu} .
\end{aligned}
\end{equation}

\noindent We conclude that with $\nu$ being a fixed positive integer, the largest value $\rho$ can attain is $2\nu$ satisfying the upper inequality.
\end{proof}

\begin{example}
    Let $\rho = 9$ and $\nu = 4$.  Then the ratio $\rho/\nu = 9/4>2$.  We know there are a total of 56 integral loops starting with a one.  Of these 56 integral loops, we can combine them into 14 groups of four, corresponding to the same trajectory but with different starting numbers.  We observe that in each of these integral loops, there is at least one odd positive integer in the integral loop smaller than $2^9-3^4=431$.  Thus none of the integral loops within the $3x+431$ problem correspond to an integral loop of the $3x+1$ problem.

\begin{tabular}{c|l|l}
    $B$ & $3x+431$ Integral Loop & $3x+1$ Integral Loop (approx.) \\
    \hline
    \begin{tabular}{l}
        1001001001000\\
        1001001000100\\
        1001000100100\\
        1000100100100
    \end{tabular} & 
    \begin{tabular}{l}
         $\{175, 956, 478,$\\
         $ 239,1148, 574,$\\
         $287, 1292, 646,$\\
         $323, 1400, 700, 350\}$  \\
    \end{tabular} &
    \begin{tabular}{l}
         $\{0.406, 2.218, 1.109,$\\
         $ 0.554,2.663, 1.331,$\\
         $0.665, 2.997,1.498,$\\
         $ 0.749, 3.248, 1.624, 0.812\}$  \\
    \end{tabular} \\
    \hline
    \begin{tabular}{l}
        1010101000000\\
        1010100000010\\
        1010000001010\\
        1000000101010
    \end{tabular} & 
    \begin{tabular}{l}
         $\{65, 626, 313,$\\
         $ 1370,685, 2486,$\\
         $ 1243, 4160, 2080,$\\
         $ 1040, 520, 260, 130\}$\\
    \end{tabular} &
    \begin{tabular}{l}
         $\{0.150, 1.452, 0.726,$\\
         $ 3.178,1.589, 5.767,$\\
         $ 2.883, 9.651,4.825,$\\
         $ 2.412, 1.206, 0.603, 0.301\}$
    \end{tabular} \\
    \hline
    \begin{tabular}{l}
        1010100100000\\
        1010010000010\\
        1001000001010\\
        1000001010100
    \end{tabular} & 
    \begin{tabular}{l}
         $\{73, 650, 325,$\\
         $ 1406,703, 2540,$\\
         $ 1270, 635,2336,$\\
         $ 1168, 584, 292, 146\}$\\
    \end{tabular} &
    \begin{tabular}{l}
         $\{0.169, 1.508, 0.754,$\\
         $ 3.262,1.631, 5.893,$\\
         $ 2.946, 1.473,5.419,$\\
         $2.709, 1.354, 0.677, 0.338\}$
    \end{tabular}\\
    \hline
    \begin{tabular}{l}
        1001010100000\\
        1010100000100\\
        1010000010010\\
        1000001001010
    \end{tabular} & 
    \begin{tabular}{l}
         $\{103, 740, 370,$\\
         $ 185,986, 493,$\\
         $ 1910, 955,3296,$\\
         $ 1648, 824, 412, 206\}$\\
    \end{tabular} &
    \begin{tabular}{l}
         $\{0.238, 1.716, 0.858,$\\
         $ 0.429,2.287, 1.143,$\\
         $ 4.431, 2.215,7.647,$\\
         $ 3.823, 1.911, 0.955, 0.477\}$
    \end{tabular}\\
\end{tabular}
\end{example}

\begin{corollary}
    
\end{corollary}


 \section{A Segue Into The Special Linear Group}

\noindent Let $\overrightarrow{\mathfrak{1}}^\nu$ denote the $\nu\times 1$ vector of all ones.

\begin{proposition}\label{dd_n}\textit{Fix $\gamma \in \mathbb{Z}_{>0}^{odd}$, $n \in \mathbb{Z}_{>0}^{odd}$, and $N\in\mathbb{Z}_{>0}$.  Suppose further that $n$ starts an integral loop of length $N$.  Let $B \in \{0,1\}^{**}$ such that $B = (\chi_\gamma^i(n))_{i\geq 0}$.  Let $\rho$ and $\nu$ be the count of the number of zeros and ones in a single period of $B$.  Let the matrix $\mathcal{D}_n$ given by Equation (\ref{d_n}).  Then the integral loop vector $[n]$ associated to $n$ and $\mathcal{D}_n$ satisfy the following equalities:}

\begin{equation}\label{central}
\begin{aligned} {}
[n] &= \frac{\gamma}{2^\rho - 3^\nu}\sum_{i=0}^{\nu-1} 3^{\nu-1-i}(D_n^i \overrightarrow{\mathfrak{1}}^\nu), \\
[n] &= \gamma(D_n - 3 \mathbb{I}_\nu)^{-1} \overrightarrow{\mathfrak{1}}^\nu.
\end{aligned}
\end{equation}

\noindent \textit{Where $\mathcal{D}_n \overrightarrow{\mathfrak{1}}^\nu$ is standard multiplication of a $\nu\times\nu$ matrix and a $\nu\times 1$ vector.}
\end{proposition}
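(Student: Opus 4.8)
The plan is to derive both equalities from Proposition \ref{3v}, which already supplies $[n] = \frac{\gamma}{2^\rho - 3^\nu}\mathcal{M}_n\overrightarrow{\mathfrak{3}}^\nu$, by first simplifying the product $\mathcal{M}_n\overrightarrow{\mathfrak{3}}^\nu$ and then recognizing the resulting expression as a matrix geometric series that telescopes against $(\mathcal{D}_n - 3\mathbb{I}_\nu)$.

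First I would compute $\mathcal{M}_n\overrightarrow{\mathfrak{3}}^\nu$ directly from the definition $\mathcal{M}_n = \sum_{k=1}^\nu \mathcal{D}_n^{k-1}\sum_{j=1}^\nu e_{j,k}$. The matrix $\sum_{j=1}^\nu e_{j,k}$ has its $k$th column equal to all ones and is zero elsewhere, so applying it to $\overrightarrow{\mathfrak{3}}^\nu = \sum_{j=1}^\nu 3^{\nu-j}e_j$ extracts the $k$th entry $3^{\nu-k}$ and returns $3^{\nu-k}\overrightarrow{\mathfrak{1}}^\nu$. Summing over $k$ and reindexing with $i = k-1$ gives $\mathcal{M}_n\overrightarrow{\mathfrak{3}}^\nu = \sum_{i=0}^{\nu-1}3^{\nu-1-i}\mathcal{D}_n^i\overrightarrow{\mathfrak{1}}^\nu$. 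Substituting this into Proposition \ref{3v} yields the first displayed equality at once.

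For the second equality I would collapse the geometric sum into closed form. Since $3\mathbb{I}_\nu$ commutes with $\mathcal{D}_n$, the scalar identity $a^\nu - b^\nu = (a-b)\sum_{i=0}^{\nu-1}a^{\nu-1-i}b^i$ holds verbatim with $a = 3\mathbb{I}_\nu$ and $b = \mathcal{D}_n$, so $(3\mathbb{I}_\nu - \mathcal{D}_n)\sum_{i=0}^{\nu-1}3^{\nu-1-i}\mathcal{D}_n^i = 3^\nu\mathbb{I}_\nu - \mathcal{D}_n^\nu$. By Proposition \ref{exp_rule}, $\mathcal{D}_n^\nu = 2^\rho\mathbb{I}_\nu$, so the right side is $(3^\nu - 2^\rho)\mathbb{I}_\nu$. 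Assuming $3\mathbb{I}_\nu - \mathcal{D}_n$ is invertible, this rearranges to $\sum_{i=0}^{\nu-1}3^{\nu-1-i}\mathcal{D}_n^i = (3^\nu - 2^\rho)(3\mathbb{I}_\nu - \mathcal{D}_n)^{-1}$. Multiplying by $\frac{\gamma}{2^\rho - 3^\nu}\overrightarrow{\mathfrak{1}}^\nu$ and using $\frac{3^\nu - 2^\rho}{2^\rho - 3^\nu} = -1$ together with $-(3\mathbb{I}_\nu - \mathcal{D}_n)^{-1} = (\mathcal{D}_n - 3\mathbb{I}_\nu)^{-1}$ converts the first equality into $[n] = \gamma(\mathcal{D}_n - 3\mathbb{I}_\nu)^{-1}\overrightarrow{\mathfrak{1}}^\nu$, as claimed.

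The hard part is justifying the invertibility of $\mathcal{D}_n - 3\mathbb{I}_\nu$, and this is really the only new ingredient. If $\lambda$ is any eigenvalue of $\mathcal{D}_n$ with eigenvector $v$, then $\mathcal{D}_n^\nu v = \lambda^\nu v = 2^\rho v$ by Proposition \ref{exp_rule}, forcing $\lambda^\nu = 2^\rho$; hence every eigenvalue of $\mathcal{D}_n$ is a $\nu$th root of $2^\rho$. Thus $3$ can be an eigenvalue only if $3^\nu = 2^\rho$, i.e. only if $2^\rho - 3^\nu = 0$. But for a genuine integral loop the quantity $2^\rho - 3^\nu$ is the nonzero denominator of the Integral Loop Formula (Equation (\ref{integral_loop_formula_equ})), so $2^\rho \neq 3^\nu$, whence $3$ is not an eigenvalue and $\mathcal{D}_n - 3\mathbb{I}_\nu$ is invertible. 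Everything else is bookkeeping on indices and the commuting geometric series.
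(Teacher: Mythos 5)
Your proposal is correct and follows essentially the same route as the paper: the first equality comes from expanding $\mathcal{M}_n\overrightarrow{\mathfrak{3}}^\nu$ into the geometric sum $\sum_{i=0}^{\nu-1}3^{\nu-1-i}\mathcal{D}_n^i\overrightarrow{\mathfrak{1}}^\nu$, and the second from telescoping that sum against $\mathcal{D}_n-3\mathbb{I}_\nu$ using $\mathcal{D}_n^\nu=2^\rho\mathbb{I}_\nu$ from Proposition \ref{exp_rule}, exactly the series manipulation the paper performs on the vector identity itself. The only cosmetic difference is the invertibility step, where the paper directly notes $\det(\mathcal{D}_n-3\mathbb{I}_\nu)=(-1)^{\nu+1}(2^\rho-3^\nu)\neq 0$ while you argue spectrally that every eigenvalue of $\mathcal{D}_n$ is a $\nu$th root of $2^\rho$ and so cannot equal $3$; both reduce to the same fact that $2^\rho\neq 3^\nu$.
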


\begin{proof}
The first equality follows from Proposition \ref{exp_rule} and Equation (\ref{integral_loop_formula_equ}).  The second equality can be derived from the first by a series expansion:

\begin{equation*}
\begin{aligned} {}
D_n[n] &= \frac{\gamma}{2^\rho - 3^\nu} \left( \sum_{i=0}^{\nu-2} 3^{\nu-1-i}(D_n^{i+1} \overrightarrow{\mathfrak{1}}^\nu) + 2^\rho D_n^0 \overrightarrow{\mathfrak{1}}^\nu \right), \\
3[n] &=  \frac{\gamma}{2^\rho - 3^\nu} \sum_{i=0}^{\nu-1} 3^{\nu-i}(D_n^i \overrightarrow{\mathfrak{1}}^\nu), \\
(D_n-3\mathbb{I}_\nu)[n] &= \gamma \overrightarrow{\mathfrak{1}}^\nu, \\
[n] &= \gamma(D_n - 3 \mathbb{I}_\nu)^{-1} \overrightarrow{\mathfrak{1}}^\nu.
\end{aligned}
\end{equation*}

We take note that $\det(D_n - 3\mathbb{I}_\nu) = (-1)^{\nu+1}(2^\rho-3^\nu) \neq 0$.  Thus, both equalities have been established.
\end{proof}

\begin{example}\normalfont
Consider the integral loop induced by $n = 133$ and $\gamma = 943$.  We verify the second equality of Equation (\ref{central}):

\begin{equation*}
\begin{aligned}
943 \left( \begin{bmatrix} -3 & 2 &  0 &  0 \\ 0 & -3 & 4 & 0 \\  0 & 0 & -3 & 8 \\ 16 & 0 & 0 & -3 \end{bmatrix} \right)^{-1} \times \begin{bmatrix} 1 \\ 1 \\ 1\\ 1 \end{bmatrix} &= 943 \begin{bmatrix} \frac{27}{943} & \frac{18}{943} &  \frac{24}{943} &  \frac{64}{943} \\ \frac{512}{943} & \frac{27}{943} & \frac{36}{943} & \frac{96}{943} \\  \frac{384}{943} & \frac{256}{943} & \frac{27}{943} & \frac{72}{943} \\ \frac{144}{943} & \frac{96}{943} & \frac{128}{943} & \frac{27}{943} \end{bmatrix} \times \begin{bmatrix} 1 \\ 1 \\ 1\\ 1 \end{bmatrix}, \\
&= 943 \begin{bmatrix} \frac{133}{943} \\ \frac{671}{943} \\ \frac{739}{943} \\ \frac{395}{943} \end{bmatrix} = \begin{bmatrix} 133 \\ 671\\ 739 \\ 395 \end{bmatrix} = [133].
\end{aligned}
\end{equation*}

\end{example}

\begin{proposition}\textit{Fix $\gamma, n, \tilde{n} \in \mathbb{Z}_{>0}^{odd}$, and $N\in\mathbb{Z}_{>0}$.  Suppose both $n$ and $\tilde{n}$ start integral loops of length $N$.  Let $B, \tilde{B} \in \{0,1\}^{**}$ be the binary sequences associated to the characteristic trajectories of $n$ and $\tilde{n}$, respectfully.  Let $\rho, \tilde{\rho}, \nu \text{ and } \tilde{\nu}$ be the count for the number of zeros and ones for a single period of $B$ and $\tilde{B}$, respectfully.  Suppose further that $\nu = \tilde{\nu}$.  Let $[n]$ and $[\tilde{n}]$ be the associated integral loop vectors and $\mathcal{D}_n$ and $\mathcal{D}_{\tilde{n}}$ constructed as by Equation (\ref{d_n}).  Then $[n]$ can be transformed into $[\tilde{n}]$ by the following matrix:}

\begin{equation}\label{sl}
[\tilde{n}] = (\mathcal{D}_{\tilde{n}}-3\mathbb{I}_\nu)^{-1}(\mathcal{D}_n-3\mathbb{I}_\nu)[n].
\end{equation}

\noindent \textit{Moreover, this transformation is an element of the special linear group $SL(\nu, \mathbb{R})$.}
\end{proposition}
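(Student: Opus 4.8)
The plan is to reduce the entire statement to the second identity of Proposition \ref{dd_n}, which exhibits each integral loop vector as the unique solution of one and the same linear system. First I would apply that proposition to both $n$ and $\tilde{n}$. Because the additive constant $\gamma$ is common to the two loops, this gives $(\mathcal{D}_n - 3\mathbb{I}_\nu)[n] = \gamma \overrightarrow{\mathfrak{1}}^\nu$ and $(\mathcal{D}_{\tilde{n}} - 3\mathbb{I}_\nu)[\tilde{n}] = \gamma \overrightarrow{\mathfrak{1}}^\nu$. The right-hand sides are literally the same vector, namely the all-ones column of length $\nu = \tilde{\nu}$ scaled by the same $\gamma$, so the two left-hand sides coincide. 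Left-multiplying the resulting identity $(\mathcal{D}_{\tilde{n}} - 3\mathbb{I}_\nu)[\tilde{n}] = (\mathcal{D}_n - 3\mathbb{I}_\nu)[n]$ by $(\mathcal{D}_{\tilde{n}} - 3\mathbb{I}_\nu)^{-1}$, whose invertibility is already recorded in the proof of Proposition \ref{dd_n}, yields Equation (\ref{sl}) at once.

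For the membership in $SL(\nu,\mathbb{R})$ I would compute the determinant of $M = (\mathcal{D}_{\tilde{n}} - 3\mathbb{I}_\nu)^{-1}(\mathcal{D}_n - 3\mathbb{I}_\nu)$ as the ratio $\det(\mathcal{D}_n - 3\mathbb{I}_\nu)/\det(\mathcal{D}_{\tilde{n}} - 3\mathbb{I}_\nu)$. The proof of Proposition \ref{dd_n} already establishes $\det(\mathcal{D}_n - 3\mathbb{I}_\nu) = (-1)^{\nu+1}(2^\rho - 3^\nu)$, with the analogous expression $(-1)^{\nu+1}(2^{\tilde{\rho}} - 3^{\tilde{\nu}})$ for $\tilde{n}$. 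The one genuine observation, and the only place any reasoning beyond bookkeeping enters, is that $\rho = \tilde{\rho}$: a single period of the characteristic trajectory has length equal to the loop length $N$, and each of the $N$ steps is either a halving (a zero, counted by $\rho$) or a tripling step (a one, counted by $\nu$), so $N = \rho + \nu$ and likewise $N = \tilde{\rho} + \tilde{\nu}$. Since $n$ and $\tilde{n}$ share the same $N$ and the hypothesis supplies $\nu = \tilde{\nu}$, subtracting forces $\rho = \tilde{\rho}$. The two determinant formulas then agree, their ratio is $1$, and $M \in SL(\nu,\mathbb{R})$.

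I expect no serious obstacle here, as the whole argument is an application of Proposition \ref{dd_n} together with the determinant computation carried out in its proof. The only pitfall I would guard against is silently cancelling the factors $2^\rho - 3^\nu$ in the determinant ratio without justification; I would make the equality $\rho = \tilde{\rho}$ explicit via $N = \rho + \nu = \tilde{\rho} + \tilde{\nu}$ rather than leave it to the reader, since it is precisely this identity that pins the determinant to $1$ rather than merely to a power of $\pm 1$ times a ratio of Diophantine quantities.
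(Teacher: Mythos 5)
Your proposal is correct and follows essentially the same route as the paper: the transformation identity is obtained from the second equation of Proposition \ref{dd_n}, and membership in $SL(\nu,\mathbb{R})$ from the determinant formula $\det(\mathcal{D}_n - 3\mathbb{I}_\nu) = (-1)^{\nu+1}(2^\rho - 3^\nu)$ established in that proposition's proof. Your one refinement is welcome: the paper silently writes $2^\rho$ in both determinant factors, whereas you justify this by deriving $\rho = \tilde{\rho}$ from $N = \rho + \nu = \tilde{\rho} + \tilde{\nu}$ and the hypothesis $\nu = \tilde{\nu}$, which is exactly the step the paper's proof leaves implicit.
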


\begin{proof}
The equality can be constructed directly from the second equation of Proposition \ref{dd_n}.  Secondly, as $\det(\mathcal{D}_n - 3\mathbb{I}_\nu) = (-1)^{\nu+1}(2^\rho - 3^\nu)$ and $\det(\mathcal{D}_{\tilde{n}} - 3\mathbb{I}_\nu)^{-1} = (-1)^{\nu+1}(2^\rho - 3^\nu)^{-1}$ then their product has determinant one and belongs in $SL(\nu,\mathbb{R})$.
\end{proof}

\begin{example}\normalfont

Consider the integral loops induced by $n = 23$, $\tilde{n} = 19$, and $\gamma = 37$.  Then we can transform [23] into [19] in the following manner:

\begin{equation*}
\begin{bmatrix} 19 \\ 47 \\ 89 \end{bmatrix} = \begin{bmatrix}\frac{9}{37} & \frac{6}{37} & \frac{4}{37} \\ \frac{32}{37} & \frac{9}{37} & \frac{6}{37} \\ \frac{48}{37} & \frac{32}{37} & \frac{9}{37}  \end{bmatrix}\begin{bmatrix} -3 & 2 & 0 \\ 0 & -3 & 4 \\ 8 & 0 & -3 \end{bmatrix} \begin{bmatrix} 23 \\ 53 \\ 49 \end{bmatrix}.
\end{equation*}

\noindent Furthermore, we also observe that:

\begin{equation*}
\det\left( \begin{bmatrix}\frac{9}{37} & \frac{6}{37} & \frac{4}{37} \\ \frac{32}{37} & \frac{9}{37} & \frac{6}{37} \\ \frac{48}{37} & \frac{32}{37} & \frac{9}{37}  \end{bmatrix}\begin{bmatrix} -3 & 2 & 0 \\ 0 & -3 & 4 \\ 8 & 0 & -3 \end{bmatrix} \right) = \det\left( \begin{bmatrix} \frac{5}{37} & 0 & \frac{12}{37} \\ \frac{-48}{37} & 1 & \frac{18}{37} \\ \frac{-72}{37} & 0 & \frac{101}{37} \end{bmatrix} \right) = 1.
\end{equation*}

\end{example}


\section{A Segue Into The One Row Sum Group}

\begin{proposition}\textit{Fix $\gamma, n, \tilde{n} \in \mathbb{Z}_{>0}^{odd}$, and $N\in\mathbb{Z}_{>0}$.  Suppose both $n$ and $\tilde{n}$ start integral loops of length $N$.  Let $B, \tilde{B} \in \{0,1\}^{**}$ be the binary sequences associated to the characteristic trajectories of $n$ and $\tilde{n}$, respectfully.  Let $\rho, \tilde{\rho}, \nu \text{ and } \tilde{\nu}$ be the count for the number of zeros and ones for a single period of $B$ and $\tilde{B}$, respectfully.  Suppose further that $\nu = \tilde{\nu}$.  Let $[n]$ and $[\tilde{n}]$ be the associated integral loop vectors of $n$ and $\tilde{n}$ and $\mathcal{M}_n$ and $\mathcal{M}_{\tilde{n}}$ constructed as by Equation (\ref{m_n}).  Suppose $\mathcal{M}_n$ is invertible over $\mathbb{C}$.  Then $[n]$ can be transformed into $[\tilde{n}]$ by the following matrix:}

\begin{equation}\label{ors}
[\tilde{n}] = \mathcal{M}_{\tilde{n}} \mathcal{M}_n^{-1} [n].
\end{equation}

\end{proposition}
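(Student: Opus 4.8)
The plan is to derive everything from Proposition \ref{3v}, which already writes each integral loop vector as a scalar multiple of its associated matrix applied to the \emph{fixed} vector $\overrightarrow{\mathfrak{3}}^\nu$. Applying Equation (\ref{central_2}) to both $n$ and $\tilde n$ (both of which start integral loops of length $N$ in the same $3x+\gamma$ problem) gives
\[
[n] = \frac{\gamma}{2^\rho - 3^\nu}\mathcal{M}_n \overrightarrow{\mathfrak{3}}^\nu, \qquad [\tilde n] = \frac{\gamma}{2^{\tilde\rho} - 3^{\tilde\nu}}\mathcal{M}_{\tilde n}\overrightarrow{\mathfrak{3}}^{\tilde\nu}.
\]
First I would establish that the two scalar prefactors coincide. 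Since a single period of $B$ has length $N$ and every entry is either a zero or a one, we have $\rho + \nu = N$, and the identical reasoning for $\tilde B$ gives $\tilde\rho + \tilde\nu = N$. Combined with the hypothesis $\nu = \tilde\nu$, this forces $\rho = \tilde\rho$, so that $2^\rho - 3^\nu = 2^{\tilde\rho} - 3^{\tilde\nu}$; moreover $\overrightarrow{\mathfrak{3}}^\nu = \overrightarrow{\mathfrak{3}}^{\tilde\nu}$ because the two vectors have the same dimension $\nu$.

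With the prefactors and the common vector $\overrightarrow{\mathfrak{3}}^\nu$ identified, the remainder is a brief algebraic manipulation. Because $\mathcal{M}_n$ is invertible over $\mathbb{C}$ by hypothesis, I would left-multiply the relation for $[n]$ by $\mathcal{M}_n^{-1}$ to isolate the shared vector,
\[
\mathcal{M}_n^{-1}[n] = \frac{\gamma}{2^\rho - 3^\nu}\overrightarrow{\mathfrak{3}}^\nu,
\]
and then substitute this into the expression for $[\tilde n]$, obtaining
\[
\mathcal{M}_{\tilde n}\mathcal{M}_n^{-1}[n] = \frac{\gamma}{2^\rho - 3^\nu}\mathcal{M}_{\tilde n}\overrightarrow{\mathfrak{3}}^\nu = [\tilde n],
\]
which is precisely Equation (\ref{ors}).

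Once Proposition \ref{3v} is available the argument is essentially immediate, so there is no deep obstacle to surmount. The one point demanding genuine care is the bookkeeping that yields $\rho = \tilde\rho$: one must invoke the shared loop length $N$ together with $\nu = \tilde\nu$, rather than simply assuming the two denominators agree. It is also worth emphasizing that the \emph{same} fixed $\gamma$ enters both invocations of Proposition \ref{3v}, which is exactly why the factor $\gamma/(2^\rho - 3^\nu)$ cancels cleanly and no separate compatibility condition on the additive constants is required.
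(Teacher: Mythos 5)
Your proof is correct and follows exactly the route the paper takes: the paper's own proof is the one-line remark that Equation (\ref{ors}) follows immediately from Equation (\ref{central_2}), which is precisely your argument of applying Proposition \ref{3v} to both $n$ and $\tilde{n}$, inverting $\mathcal{M}_n$, and substituting. Your explicit verification that $\rho = \tilde{\rho}$ (via $\rho+\nu = N = \tilde{\rho}+\tilde{\nu}$ and $\nu=\tilde{\nu}$) is a detail the paper leaves implicit, and it is a worthwhile addition since the cancellation of the scalar prefactors genuinely depends on it.
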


\begin{proof}
Equation (\ref{ors}) can be derived immediately from Equation (\ref{central_2}).
\end{proof}
\begin{example}\normalfont

Consider the integral loop induced by $n = 133$, $\tilde{n} = 65$, and $\gamma = 943$.  Then we can transform [133] into [65] in the following manner:

\begin{equation*}
\begin{bmatrix} 65 \\ 569 \\ 1325 \\  2459 \end{bmatrix} = \begin{bmatrix} 1 & 2 & 4 & 8 \\ 1 & 2& 4 & 512 \\ 1 & 2 & 256 &512 \\ 1 & 128 & 256 & 512
\end{bmatrix}
\begin{bmatrix}
\frac{64}{51} & \frac{-2}{17} & \frac{-1}{51} & \frac{-2}{17} \\
\frac{-1}{17} & \frac{-1}{204} & \frac{-1}{68} & \frac{4}{51} \\
\frac{1}{408} & \frac{-1}{272} & \frac{1}{102} & \frac{-1}{272} \\
\frac{-1}{544} & \frac{1}{408} & \frac{-1}{2176} & \frac{-1}{6528}
\end{bmatrix}
\begin{bmatrix} 133 \\ 671 \\ 739 \\  395 \end{bmatrix}.
\end{equation*}

\end{example}

\begin{lemma}\label{one_zero}\textit{Fix $\gamma \in \mathbb{Z}_{>0}^{odd}$, $n \in \mathbb{Z}_{>0}^{odd}$, and $N\in\mathbb{Z}_{>0}$.  Suppose further that $n$ starts an integral loop of length $N$.  Let $B \in \{0,1\}^{**}$ such that $B = (\chi_\gamma^i(n))_{i\geq 0}$.  Let $\rho$ and $\nu$ be the count of the number of zeros and ones in a single period of $B$.  Let $[n]$ be the associated integral loop vector.  Let the matrix $\mathcal{M}_n$ given by Equation (\ref{m_n}) and assume that $\mathcal{M}_n$ is invertible over $\mathbb{C}$.  Then the first row of $\mathcal{M}_n^{-1}$ sums to one, while the remaining rows sum to zero.}
\end{lemma}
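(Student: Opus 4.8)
The plan is to reduce the claim to a single structural observation about the columns of $\mathcal{M}_n$, after which the lemma becomes a one-line consequence. First I would unpack the defining Equation (\ref{m_n}). Since $\sum_{j=1}^\nu e_{j,k}$ is the matrix whose $k$th column equals $\overrightarrow{\mathfrak{1}}^\nu$ and which vanishes in every other column, the product $\mathcal{D}_n^{k-1}\sum_{j=1}^\nu e_{j,k}$ is the matrix whose $k$th column is $\mathcal{D}_n^{k-1}\overrightarrow{\mathfrak{1}}^\nu$ and whose remaining columns are zero. Summing over $k$, the $k$th column of $\mathcal{M}_n$ is exactly $\mathcal{D}_n^{k-1}\overrightarrow{\mathfrak{1}}^\nu$. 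Taking $k=1$ and recalling $\mathcal{D}_n^0 = \mathbb{I}_\nu$, the first column of $\mathcal{M}_n$ is $\overrightarrow{\mathfrak{1}}^\nu$; equivalently $\mathcal{M}_n e_1 = \overrightarrow{\mathfrak{1}}^\nu$.

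Next I would translate the statement about row sums into a matrix--vector identity. The vector whose $i$th entry is the sum of the $i$th row of $\mathcal{M}_n^{-1}$ is precisely $\mathcal{M}_n^{-1}\overrightarrow{\mathfrak{1}}^\nu$, because right-multiplication by the all-ones vector adds up the entries of each row. Thus the lemma is equivalent to the single equality
\begin{equation*}
\mathcal{M}_n^{-1}\overrightarrow{\mathfrak{1}}^\nu = e_1,
\end{equation*}
since this asserts exactly that the first row of $\mathcal{M}_n^{-1}$ sums to one while every other row sums to zero.

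Finally I would combine the two observations. The hypothesis grants that $\mathcal{M}_n$ is invertible over $\mathbb{C}$, so from $\mathcal{M}_n e_1 = \overrightarrow{\mathfrak{1}}^\nu$ I may multiply on the left by $\mathcal{M}_n^{-1}$ to obtain $e_1 = \mathcal{M}_n^{-1}\overrightarrow{\mathfrak{1}}^\nu$, which is the desired identity. There is no analytic obstacle here; the only step requiring genuine care is the bookkeeping in the first paragraph, namely reading off correctly from Equation (\ref{m_n}) that the $k$th column of $\mathcal{M}_n$ is $\mathcal{D}_n^{k-1}\overrightarrow{\mathfrak{1}}^\nu$ and hence that its first column is the all-ones vector. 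Once that column structure is established, the conclusion follows immediately from interpreting row sums as right-multiplication by $\overrightarrow{\mathfrak{1}}^\nu$.
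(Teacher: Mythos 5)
Your proof is correct and takes essentially the same approach as the paper's: both reduce the lemma to the identity $\mathcal{M}_n^{-1}\overrightarrow{\mathfrak{1}}^\nu = e_1$, obtained from invertibility together with the observation that the first column of $\mathcal{M}_n$ is the all-ones vector, and both interpret row sums of $\mathcal{M}_n^{-1}$ as right-multiplication by $\overrightarrow{\mathfrak{1}}^\nu$. If anything, your unpacking of Equation (\ref{m_n}) to justify the column structure is more careful than the paper's proof, which merely asserts it (and with a slip, saying ``first row'' where it means ``first column'', as the examples of $\mathcal{M}_n$ in the paper confirm).
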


\begin{proof}
Recall that the first row of $\mathcal{M}_n$ is the all ones vector $\overrightarrow{\mathfrak{1}}^\nu$.  By virtue of invertibility, we require $\mathcal{M}_n^{-1} \mathcal{M}_n = \mathbb{I}_\nu$.  This is only satisfied if $\mathcal{M}_n^{-1}\overrightarrow{\mathfrak{1}}^\nu = e_1$.  
\end{proof}

\begin{theorem}\label{one_sum}\textit{Let $\gamma, n, \tilde{n} \in \mathbb{Z}_{>0}^{odd}$, and $N\in\mathbb{Z}_{>0}$ be given.  Suppose both $n$ and $\tilde{n}$ start integral loops of length $N$.  Let $B, \tilde{B} \in \{0,1\}^{**}$ be the binary sequences associated to the characteristic trajectories of $n$ and $\tilde{n}$, respectfully.  Let $\rho, \tilde{\rho}, \nu \text{ and } \tilde{\nu}$ be the count for the number of zeros and ones for a single period of $B$ and $\tilde{B}$, respectfully.  Suppose further that $\nu = \tilde{\nu}$.  Let $\mathcal{M}_n$ and $\mathcal{M}_{\tilde{n}}$ be constructed as by Equation (\ref{m_n}), and assume that $\mathcal{M}_n$ is invertible over $\mathbb{C}$.  Then each of the row sums of $\mathcal{M}_{\tilde{n}} \mathcal{M}_n^{-1}$ is one.}
\end{theorem}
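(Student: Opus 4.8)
The plan is to reduce the claim about row sums to a single matrix--vector identity, since a square matrix $A$ has all of its row sums equal to one precisely when $A\overrightarrow{\mathfrak{1}}^\nu = \overrightarrow{\mathfrak{1}}^\nu$. Because $\nu = \tilde{\nu}$, both $\mathcal{M}_n$ and $\mathcal{M}_{\tilde{n}}$ are $\nu\times\nu$ and the product $\mathcal{M}_{\tilde{n}}\mathcal{M}_n^{-1}$ is well defined, so it suffices to verify that $\mathcal{M}_{\tilde{n}}\mathcal{M}_n^{-1}\overrightarrow{\mathfrak{1}}^\nu = \overrightarrow{\mathfrak{1}}^\nu$. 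First I would record this reformulation explicitly, as it converts the statement into something amenable to the two structural facts already available.

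Next I would invoke Lemma \ref{one_zero}. Its content is exactly that $\mathcal{M}_n^{-1}\overrightarrow{\mathfrak{1}}^\nu = e_1$: the first row of $\mathcal{M}_n^{-1}$ sums to one and all other rows sum to zero, which is the componentwise reading of $\mathcal{M}_n^{-1}\overrightarrow{\mathfrak{1}}^\nu = e_1$. Substituting this into the product gives $\mathcal{M}_{\tilde{n}}\mathcal{M}_n^{-1}\overrightarrow{\mathfrak{1}}^\nu = \mathcal{M}_{\tilde{n}}e_1$, so the entire theorem collapses to identifying the first column of $\mathcal{M}_{\tilde{n}}$.

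For that last step I would return to the defining Equation (\ref{m_n}). In the sum $\mathcal{M}_{\tilde{n}} = \sum_{k=1}^\nu \mathcal{D}_{\tilde{n}}^{k-1}\sum_{j=1}^\nu e_{j,k}$, every summand with $k\geq 2$ has all of its nonzero entries in column $k$, so only the $k=1$ term contributes to the first column. That term is $\mathcal{D}_{\tilde{n}}^{0}\sum_{j=1}^\nu e_{j,1} = \mathbb{I}_\nu \sum_{j=1}^\nu e_{j,1}$, the matrix whose first column is all ones. Hence the first column of $\mathcal{M}_{\tilde{n}}$ is $\overrightarrow{\mathfrak{1}}^\nu$, which is precisely the structural fact Lemma \ref{one_zero} exploited for $\mathcal{M}_n$. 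Therefore $\mathcal{M}_{\tilde{n}}e_1 = \overrightarrow{\mathfrak{1}}^\nu$, and chaining the two identities yields $\mathcal{M}_{\tilde{n}}\mathcal{M}_n^{-1}\overrightarrow{\mathfrak{1}}^\nu = \overrightarrow{\mathfrak{1}}^\nu$, completing the argument.

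There is no genuine analytic obstacle here; the proof is a two-line composition once the reformulation is made. The only point requiring care is bookkeeping of conventions: I must make sure the row-sum reading of Lemma \ref{one_zero} really delivers $\mathcal{M}_n^{-1}\overrightarrow{\mathfrak{1}}^\nu = e_1$ (a statement about $\mathcal{M}_n^{-1}$ acting on the right by $\overrightarrow{\mathfrak{1}}^\nu$), and not the transposed column-sum version, and that the all-ones first column is a property of \emph{every} matrix built from Equation (\ref{m_n}) so that it applies to $\mathcal{M}_{\tilde{n}}$ and not merely to $\mathcal{M}_n$. Both are immediate from the definitions, so the hardest part is simply stating them in the correct order.
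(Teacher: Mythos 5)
Your proof is correct, but it is packaged differently from the paper's. The paper also starts from Lemma \ref{one_zero}, but instead of converting it into the vector identity $\mathcal{M}_n^{-1}\overrightarrow{\mathfrak{1}}^\nu = e_1$, it rewrites $\mathcal{M}_n^{-1}$ in a block form whose first column is forced to be $1-\sum_{i\geq 2} a_{1,i},\, -\sum_{i\geq 2} a_{2,i},\dots$ by the row-sum conditions, then expands the $j$th row of $\mathcal{M}_{\tilde{n}}\mathcal{M}_n^{-1}$ entry by entry (with the powers $2^{m'_j}, 2^{m'_j+m'_{j+1}},\dots$ appearing explicitly) and observes that every negative term in the first-column coefficient cancels against a matching term elsewhere in the row, leaving a row sum of one. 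Your route replaces that bookkeeping with the composition $\mathcal{M}_{\tilde{n}}\bigl(\mathcal{M}_n^{-1}\overrightarrow{\mathfrak{1}}^\nu\bigr) = \mathcal{M}_{\tilde{n}} e_1 = \overrightarrow{\mathfrak{1}}^\nu$, where the second equality is your isolated observation that only the $k=1$ summand of Equation (\ref{m_n}) touches the first column, so the first column of \emph{any} matrix built by that formula is all ones --- a fact the paper uses only implicitly when it writes out the row entries of $\mathcal{M}_{\tilde{n}}$. What your version buys: it is two lines, it eliminates the index-level cancellation argument (which in the paper is asserted rather than fully displayed), and it makes transparent that invertibility of $\mathcal{M}_{\tilde{n}}$ is never needed. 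It also quietly sidesteps a slip in the paper's proof of Lemma \ref{one_zero}, which says the first \emph{row} of $\mathcal{M}_n$ is all ones when it is the first \emph{column}; your careful statement $\mathcal{M}_n^{-1}\overrightarrow{\mathfrak{1}}^\nu = e_1$ relies on the correct (column) reading. What the paper's computation buys in exchange is a concrete view of \emph{how} the cancellation happens among the $2^{m'_j}$ entries, which foreshadows the stochastic-group discussion that follows the theorem.
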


\begin{proof}
Recall that from Lemma \ref{one_zero} that the the first row sum of $\mathcal{M}_n^{-1}$ is one, while the remaining row sums are zero.  With a proper change of variables, we can rewrite $\mathcal{M}_n^{-1}$ in the following block diagram:

\begin{equation*}
\mathcal{M}_n^{-1}=\left[
\begin{array}{c|ccc}
1-\sum_{i=2}^\nu a_{1,i}  & a_{1,2}& \cdots & a_{1,\nu} \\ \hline
 -\sum_{i=2}^\nu a_{2,i}  & a_{2,2}& \cdots & a_{2,\nu} \\
\vdots & \vdots & \ddots & \vdots \\
 -\sum_{i=2}^\nu a_{\nu, i}  & a_{\nu,2}& \cdots & a_{\nu,\nu} 
\end{array}\right]
\end{equation*}

\noindent Without loss of generality, let us consider the $j$th row of the product of $\mathcal{M}_{\tilde{n}} \mathcal{M}_n^{-1}$.  Written out explicitly, we find that:

\begin{equation*}
\begin{aligned} {}
[\mathcal{M}_{\tilde{n}} \mathcal{M}_n^{-1}]_j &= \left( 1-\sum_{i=2}^\nu a_{1,i} -2^{m'_j}\sum_{i=2}^\nu a_{2,i} - 2^{m'_j+m'_{j+1}}\sum_{i=2}^\nu a_{3,i}-... \right)e_{j,1}, \\
&= \left( a_{1,2} +2^{m'_j} a_{2,2}+2^{m'_j+m'_{j+1}} a_{3,2}+... \right) e_{j,2}, \\
&= \left( a_{1,3} +2^{m'_j} a_{2,3}+2^{m'_j+m'_{j+1}} a_{3,3}+... \right) e_{j,3}, \\
& ... \\
&= \left( a_{1,\nu} +2^{m'_j} a_{2,\nu}+2^{m'_j+m'_{j+1}} a_{3,\nu}+... \right) e_{j,\nu}. \\
\end{aligned}
\end{equation*}

\noindent As a row sum, every negative term in $e_{j,1}$ cancels identically with a corresponding term further down the row.  Since the row choice was arbitrary, we conclude that the row sums of the matrix product $\mathcal{M}_{\tilde{n}} \mathcal{M}_n^{-1}$ are all ones.
\end{proof}

\begin{example}\normalfont

Consider the integral loops induced by $n = 133$, $\tilde{n} = 65$, and $\gamma = 943$.  We can observe the row-sum property below:

\begin{equation*}
\begin{aligned}
\mathcal{M}_{65}\mathcal{M}_{133}^{-1}\overrightarrow{\mathfrak{1}}^4 &= \begin{bmatrix} 1 & 2 & 4 & 8 \\ 1 & 2& 4 & 512 \\ 1 & 2 & 256 &512 \\ 1 & 128 & 256 & 512
\end{bmatrix}
\begin{bmatrix}
\frac{64}{51} & \frac{-2}{17} & \frac{-1}{51} & \frac{-2}{17} \\
\frac{-1}{17} & \frac{-1}{204} & \frac{-1}{68} & \frac{4}{51} \\
\frac{1}{408} & \frac{-1}{272} & \frac{1}{102} & \frac{-1}{272} \\
\frac{-1}{544} & \frac{1}{408} & \frac{-1}{2176} & \frac{-1}{6528}
\end{bmatrix}
\begin{bmatrix} 1 \\ 1 \\ 1 \\  1 \end{bmatrix}, \\
&= \begin{bmatrix}
\frac{227}{204} & \frac{-25}{204} & \frac{-11}{816} & \frac{19}{816} \\
\frac{19}{102} & \frac{227}{204} & \frac{-25}{102} & \frac{-11}{204} \\
\frac{-22}{51} & \frac{19}{102} & \frac{227}{102} & \frac{-50}{51} \\
\frac{-400}{51} & \frac{-22}{51} & \frac{19}{51} & \frac{454}{51}
\end{bmatrix}
\begin{bmatrix} 1 \\ 1 \\ 1 \\  1 \end{bmatrix} = \begin{bmatrix} 1 \\ 1 \\ 1 \\  1 \end{bmatrix}.
\end{aligned}
\end{equation*}

\end{example}

\noindent Theorem \ref{one_sum} indicates a property of Equation (\ref{ors}) that deserves its own attention.  Thus we proceed with the following.  

\begin{definition}
A \textit{stochastic matrix} over a field $F$ is a square matrix with entries from $F$ with the property that the entries in each of its columns add up to one\normalfont \cite{stochastic1995David}.
\end{definition}


\noindent Recall Theorem \ref{one_sum} only assumed that $\mathcal{M}_n$ was invertible.  However, if we suppose that $\mathcal{M}_{\tilde{n}}$ is also invertible, then their product $\mathcal{M}_{\tilde{n}}\mathcal{M}_n^{-1}$ is invertible.  This takes us to another definition.

\begin{definition}\normalfont The group of nonsingular stochastic $n\times n$ matrices over a field $F$ is called the \textit{stochastic group} of $n\times n$ matrices over $F$ and is denoted $S(n,F)$\cite{stochastic1995David}.
\end{definition}

\noindent Thus we see that in the case of Theorem \ref{one_sum}, if $\mathcal{M}_{\tilde{n}}\mathcal{M}_n^{-1}$ is nonsingular then $\left(\mathcal{M}_{\tilde{n}}\mathcal{M}_n^{-1}\right)^T $ is an element of $S(\nu ,\mathbb{R})$.  We provide one more property of this group, which will be useful later.  Recall that $Aff(n,F)$ is the $n\times n$ \textit{affine group} over a field $F$.

 \section{A Segue Into an Eigenvector Problem}

 Equation (\ref{sl}) and Equation (\ref{ors}) both provide a limited means to transform one integral loop vector into another integral loop vector.  So long as the respective assumptions hold, it is clear that both equations satisfy the conditions of an equivalence relation.  This is formalized in the following proposition.

\begin{proposition}\label{equiv}\textit{Fix $\gamma, n, \tilde{n}, \tilde{\tilde{n}} \in \mathbb{Z}_{>0}^{odd}$, and $N\in\mathbb{Z}_{>0}$.  Suppose $n$, $\tilde{n}$, and $\tilde{\tilde{n}}$ start integral loops of length $N$.  Let $B, \tilde{B}, \text{ and } \tilde{\tilde{B}} \in \{0,1\}^{**}$ be the binary sequences associated to the characteristic trajectories of $n$, $\tilde{n}$, and $\tilde{\tilde{n}}$, respectfully.  Let $\rho, \tilde{\rho}, \tilde{\tilde{\rho}}, \nu, \tilde{\nu} \text{ and } \tilde{\tilde{\nu}}$ be the count for the number of zeros and ones for a single period of $B$, $\tilde{B}$, and $\tilde{\tilde{B}}$, respectfully.  Suppose further that $\nu = \tilde{\nu} = \tilde{\tilde{\nu}}$.  Let $\mathcal{M}_n, \mathcal{M}_{\tilde{n}}, \text{ and } \mathcal{M}_{\tilde{\tilde{n}}}$ be constructed as by Equation (\ref{m_n}), and assume that all three are invertible over $\mathbb{C}$.  Let $\mathcal{D}_n, \mathcal{D}_{\tilde{n}}, \text{ and } \mathcal{D}_{\tilde{\tilde{n}}}$ be constructed as Equation (\ref{d_n}). Then Equation (\ref{sl}) and Equation (\ref{ors}) independently satisfy the conditions of an equivalence relation.}
\end{proposition}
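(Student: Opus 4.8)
The plan is to verify directly the three defining properties of an equivalence relation --- reflexivity, symmetry, and transitivity --- for each of the two transformation laws, exploiting that both have the same telescoping structure $A_{\tilde{n}}^{-1}A_n$. For Equation (\ref{sl}) I would abbreviate $G_n := \mathcal{D}_n - 3\mathbb{I}_\nu$, so that the law reads $[\tilde{n}] = G_{\tilde{n}}^{-1}G_n[n]$; by the closing remark of Proposition \ref{dd_n}, $\det(G_n) = (-1)^{\nu+1}(2^\rho-3^\nu) \neq 0$, so every such $G_n$ is invertible and the transformation matrix is well-defined with no additional hypothesis. For Equation (\ref{ors}) I would write $[\tilde{n}] = \mathcal{M}_{\tilde{n}}\mathcal{M}_n^{-1}[n]$, where invertibility of the relevant $\mathcal{M}$ matrices is exactly the standing assumption of the proposition.

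First I would treat reflexivity: setting $\tilde{n}=n$ collapses the transformation matrix to $G_n^{-1}G_n = \mathbb{I}_\nu$ (respectively $\mathcal{M}_n\mathcal{M}_n^{-1} = \mathbb{I}_\nu$), so $[n]$ is related to itself. Next, for symmetry, I would left-multiply $[\tilde{n}] = G_{\tilde{n}}^{-1}G_n[n]$ by $G_n^{-1}G_{\tilde{n}}$ to obtain $[n] = G_n^{-1}G_{\tilde{n}}[\tilde{n}]$, which is precisely Equation (\ref{sl}) with the roles of $n$ and $\tilde{n}$ interchanged; the identical manipulation using $(\mathcal{M}_{\tilde{n}}\mathcal{M}_n^{-1})^{-1} = \mathcal{M}_n\mathcal{M}_{\tilde{n}}^{-1}$ handles Equation (\ref{ors}).

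The core computation is transitivity. Assuming $[\tilde{n}] = G_{\tilde{n}}^{-1}G_n[n]$ and $[\tilde{\tilde{n}}] = G_{\tilde{\tilde{n}}}^{-1}G_{\tilde{n}}[\tilde{n}]$, I would substitute the first into the second and watch the middle factors telescope via $G_{\tilde{n}}G_{\tilde{n}}^{-1} = \mathbb{I}_\nu$, leaving $[\tilde{\tilde{n}}] = G_{\tilde{\tilde{n}}}^{-1}G_n[n]$, which is exactly the law connecting $[n]$ to $[\tilde{\tilde{n}}]$. The same cancellation $\mathcal{M}_{\tilde{n}}^{-1}\mathcal{M}_{\tilde{n}} = \mathbb{I}_\nu$ yields transitivity for Equation (\ref{ors}).

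I do not expect a genuine obstacle, since both laws are instances of the group-theoretic fact that maps of the form $A_{\tilde{n}}^{-1}A_n$ (or $A_{\tilde{n}}A_n^{-1}$) automatically compose into an equivalence relation. The only point requiring care is bookkeeping of the invertibility hypotheses: the $G_n$ are always invertible, so (\ref{sl}) needs nothing extra, whereas for (\ref{ors}) I must keep the assumption that $\mathcal{M}_n$, $\mathcal{M}_{\tilde{n}}$, and $\mathcal{M}_{\tilde{\tilde{n}}}$ are all invertible over $\mathbb{C}$ active throughout, so that every transformation matrix and every inverse appearing in the symmetry and transitivity steps is defined.
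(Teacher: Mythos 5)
Your proposal is correct and matches the paper's own proof in essence: the paper likewise verifies reflexivity, symmetry, and transitivity for each of Equations (\ref{sl}) and (\ref{ors}) by the same direct matrix cancellations $G_n^{-1}G_n = \mathbb{I}_\nu$, $\mathcal{M}_{\tilde{n}}^{-1}\mathcal{M}_{\tilde{n}} = \mathbb{I}_\nu$, and the telescoping substitution for transitivity. Your explicit bookkeeping of where invertibility is needed --- automatic for $\mathcal{D}_n - 3\mathbb{I}_\nu$ via $\det(\mathcal{D}_n - 3\mathbb{I}_\nu) = (-1)^{\nu+1}(2^\rho - 3^\nu) \neq 0$, but a standing hypothesis for the $\mathcal{M}$ matrices --- is a minor refinement the paper leaves implicit, not a different route.
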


\begin{proof}
We will check the properties for both equations.
\begin{enumerate}
\item Reflexivity: $[n] \sim [n]$
\begin{enumerate}
\item $[n] = \mathcal{M}_n \mathcal{M}_n^{-1} [n] = \mathbb{I}_\nu [n] = [n].$
\item $[n] = (\mathcal{D}_n - 3\mathbb{I}_\nu)^{-1}(\mathcal{D}_n - 3\mathbb{I}_\nu) [n] = \mathbb{I}_\nu [n] = [n].$
\end{enumerate}
\item Symmetry: $[n] \sim [\tilde{n}] \Leftrightarrow [\tilde{n}] \sim [n]$.
\begin{enumerate}
\item $[n] = \mathcal{M}_n \mathcal{M}_{\tilde{n}}^{-1} [\tilde{n}] \Leftrightarrow [\tilde{n}] = \mathcal{M}_{\tilde{n}} \mathcal{M}_n^{-1} [n].$
\item $[n] = (\mathcal{D}_n - 3\mathbb{I}_\nu)^{-1}(\mathcal{D}_{\tilde{n}} - 3\mathbb{I}_\nu) [n'] \Leftrightarrow [\tilde{n}] = (\mathcal{D}_{\tilde{n}} - 3\mathbb{I}_\nu)^{-1}(\mathcal{D}_n - 3\mathbb{I}_\nu) [n].$
\end{enumerate}
\item Transitivity: $[n] \sim [\tilde{n}]$ and $[\tilde{n}] \sim [\tilde{\tilde{n}}]$ then $[n] \sim [\tilde{\tilde{n}}].$
\begin{enumerate}
\item $[n] = \mathcal{M}_n \mathcal{M}_{\tilde{n}}^{-1} [\tilde{n}]$ and $[\tilde{n}] = \mathcal{M}_{\tilde{n}} \mathcal{M}_{\tilde{\tilde{n}}}^{-1} [\tilde{\tilde{n}}]$ then \\ $[n] = \mathcal{M}_n \mathcal{M}_{\tilde{n}}^{-1}\mathcal{M}_{\tilde{n}} \mathcal{M}_{\tilde{\tilde{n}}}^{-1} [\tilde{\tilde{n}}] = \mathcal{M}_n \mathcal{M}_{\tilde{\tilde{n}}}^{-1} [\tilde{\tilde{n}}].$
\item $[n] = (\mathcal{D}_n - 3\mathbb{I}_\nu)^{-1}(\mathcal{D}_{\tilde{n}} - 3\mathbb{I}_\nu) [\tilde{n}]$ and $[\tilde{n}] = (\mathcal{D}_{\tilde{n}} - 3\mathbb{I}_\nu)^{-1}(\mathcal{D}_{\tilde{\tilde{n}}} - 3\mathbb{I}_\nu) [\tilde{\tilde{n}}]$ then \\ $[n] = (\mathcal{D}_n - 3\mathbb{I}_\nu)^{-1}(\mathcal{D}_{\tilde{n}} - 3\mathbb{I}_\nu)(\mathcal{D}_{\tilde{n}} - 3\mathbb{I}_\nu)^{-1}(\mathcal{D}_{\tilde{\tilde{n}}} - 3\mathbb{I}_\nu) [\tilde{\tilde{n}}] = (\mathcal{D}_n - 3\mathbb{I}_\nu)^{-1}(\mathcal{D}_{\tilde{\tilde{n}}} - 3\mathbb{I}_\nu) [\tilde{\tilde{n}}].$
\end{enumerate}
\end{enumerate}
\end{proof}

Proposition \ref{equiv} formalizes the structure of the transformation equations.  However, it has its limitations.  If we were inclined to construct a closed loop from an integral loop vector to itself, then by transitivity and reflexivity, the transformation would reduce to the identity.  However, if we were to take a combination of Equations (\ref{sl}) and (\ref{ors}), this is no longer the case.  This is described in the following theorem.

\begin{theorem}\label{pap}\textit{Fix $\gamma, n, \tilde{n} \in \mathbb{Z}_{>0}^{odd}$, and $N\in\mathbb{Z}_{>0}$.  Suppose both $n$ and $\tilde{n}$ start integral loops of length $N$.  Let $B, \tilde{B} \in \{0,1\}^{**}$ be the binary sequences associated to the characteristic trajectories of $n$ and $\tilde{n}$, respectfully.  Let $\rho, \tilde{\rho}, \nu \text{ and } \tilde{\nu}$ be the count for the number of zeros and ones for a single period of $B$ and $\tilde{B}$, respectfully.  Suppose further that $\nu = \tilde{\nu}$.  Let $\mathcal{M}_n, \text{ and } \mathcal{M}_{\tilde{n}}$ be constructed as by Equation (\ref{m_n}), and assume that both are invertible over $\mathbb{C}$.  Let $\mathcal{D}_n \text{ and } \mathcal{D}_{\tilde{n}}$ be constructed as Equation (\ref{d_n}).  Then $[n]$, $\mathcal{D}_n$, and $\mathcal{M}_n$ satisfy the following properties:}

\begin{enumerate}
\item\begin{equation*}
[n] = \left(\mathcal{D}_n -3\mathbb{I}_\nu\right)^{-1}\left(\mathcal{D}_{\tilde{n}}-3\mathbb{I}_\nu\right) M_{\tilde{n}}M_n^{-1} [n].
\end{equation*}
\item \begin{equation*}
[n] = \mathcal{M}_n \left( \sum_{j=1}^\nu \frac{3^{\nu-j}}{2^\rho - 3^\nu} e_{j,j} \right) \left(\mathcal{D}_n -3 \mathbb{I}_\nu\right) [n] = \mathcal{A}_n [n].
\end{equation*}
\item $\mathcal{P}\mathcal{A}_n \mathcal{P}^{-1} \in Aff(\nu-1,\mathbb{R}).$
\item $\mathcal{P}[n] = \left( \mathcal{P}\mathcal{A}_n \mathcal{P}^{-1}\right)\mathcal{P}[n].$
\item $\mathcal{P}[n]$ is a fixed point of the affine transformation $\mathcal{P}\mathcal{A}_n \mathcal{P}^{-1}.$
\end{enumerate}
\end{theorem}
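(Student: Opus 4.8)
The plan is to treat parts (1), (2), (4), and (5) as short consequences of results already in hand and to concentrate the real work on part (3). For part (1) I would simply compose the two transformation laws: rewriting Equation (\ref{sl}) as $(\mathcal{D}_{\tilde n}-3\mathbb{I}_\nu)[\tilde n]=(\mathcal{D}_n-3\mathbb{I}_\nu)[n]$ and solving for $[n]$ gives $[n]=(\mathcal{D}_n-3\mathbb{I}_\nu)^{-1}(\mathcal{D}_{\tilde n}-3\mathbb{I}_\nu)[\tilde n]$, and substituting the value of $[\tilde n]$ supplied by Equation (\ref{ors}) yields the stated identity. For part (2) I would abbreviate the diagonal factor as $\Lambda=\sum_{j=1}^\nu 3^{\nu-j}(2^\rho-3^\nu)^{-1}e_{j,j}$ and compute $\mathcal{A}_n[n]=\mathcal{M}_n\Lambda(\mathcal{D}_n-3\mathbb{I}_\nu)[n]$. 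The second identity of Proposition \ref{dd_n} gives $(\mathcal{D}_n-3\mathbb{I}_\nu)[n]=\gamma\,\overrightarrow{\mathfrak{1}}^\nu$, while $\Lambda\,\overrightarrow{\mathfrak{1}}^\nu=(2^\rho-3^\nu)^{-1}\overrightarrow{\mathfrak{3}}^\nu$; feeding both into Equation (\ref{central_2}) collapses the product back to $[n]$, so $\mathcal{A}_n[n]=[n]$.

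The heart of the theorem is part (3), and the key lemma I would establish first is that the first row of $\mathcal{A}_n$ equals $e_1^{T}$, that is $e_1^{T}\mathcal{A}_n=e_1^{T}$. To prove it I would read off the first row of $\mathcal{M}_n$ from Proposition \ref{exp_rule}, namely $(\mathcal{M}_n)_{1,k}=2^{m_1+\cdots+m_{k-1}}$ (empty sum $0$ when $k=1$), form the covector $u^{T}=e_1^{T}\mathcal{M}_n\Lambda$ with entries $u_k=3^{\nu-k}2^{m_1+\cdots+m_{k-1}}(2^\rho-3^\nu)^{-1}$, and evaluate $u^{T}(\mathcal{D}_n-3\mathbb{I}_\nu)=u^{T}\mathcal{D}_n-3u^{T}$. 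Using $\mathcal{D}_n=\sum_j 2^{m_j}e_{j,|j|_\nu+1}$ from Equation (\ref{d_n}), the column-$c$ entry for $c\ge 2$ is $u_{c-1}2^{m_{c-1}}-3u_c$, and this telescopes to $0$ because advancing the index multiplies the $2$-power by $2^{m_{c-1}}$ while shedding exactly one factor of $3$. The wrap-around column $c=1$ instead produces $u_\nu 2^{m_\nu}-3u_1=(2^\rho-3^\nu)(2^\rho-3^\nu)^{-1}=1$, where the identity $\sum_j m_j=\rho$ is precisely what makes the numerator $2^{m_1+\cdots+m_\nu}-3^\nu=2^\rho-3^\nu$ collapse. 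Hence $e_1^{T}\mathcal{A}_n=e_1^{T}$.

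Given that lemma I would finish part (3) by transporting the fixed covector through the conjugation. Since $\mathcal{P}$ is the cyclic shift of Equation (\ref{cal_p}), one checks directly that $e_1^{T}\mathcal{P}^{-1}=e_\nu^{T}$, whence $e_\nu^{T}(\mathcal{P}\mathcal{A}_n\mathcal{P}^{-1})=e_1^{T}\mathcal{A}_n\mathcal{P}^{-1}=e_1^{T}\mathcal{P}^{-1}=e_\nu^{T}$; thus the last row of $\mathcal{P}\mathcal{A}_n\mathcal{P}^{-1}$ is $(0,\dots,0,1)$, which is exactly the block shape of the standard embedding of $Aff(\nu-1,\mathbb{R})$ in $GL(\nu,\mathbb{R})$. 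Invertibility of the linear part is inherited from $\det\mathcal{A}_n\neq 0$, guaranteed because $\mathcal{M}_n$ is invertible by hypothesis, $\Lambda$ is invertible, and $\det(\mathcal{D}_n-3\mathbb{I}_\nu)=(-1)^{\nu+1}(2^\rho-3^\nu)\neq 0$ by Proposition \ref{dd_n}. Parts (4) and (5) then drop out of part (2): left-multiplying $\mathcal{A}_n[n]=[n]$ by $\mathcal{P}$ and inserting $\mathcal{P}^{-1}\mathcal{P}$ gives $(\mathcal{P}\mathcal{A}_n\mathcal{P}^{-1})(\mathcal{P}[n])=\mathcal{P}[n]$, and since part (3) has identified $\mathcal{P}\mathcal{A}_n\mathcal{P}^{-1}$ as an affine transformation, this equation exhibits $\mathcal{P}[n]$ as its fixed point. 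I expect the main obstacle to be the telescoping bookkeeping in the row computation of part (3) — keeping the cyclic index $|j|_\nu+1$ and the boundary term $m_\nu$ straight — rather than any conceptual difficulty, since every other step is a direct appeal to an earlier identity.
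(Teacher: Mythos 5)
Your proposal is correct and follows essentially the same route as the paper's proof: parts (1), (2), (4), and (5) by the same appeals to Equations (\ref{sl}), (\ref{ors}), (\ref{central}), and (\ref{central_2}), and part (3) by showing the first row of $\mathcal{A}_n$ is $e_1^{T}$ via the same telescoping cancellation (with the wrap-around term producing $(2^\rho-3^\nu)/(2^\rho-3^\nu)=1$) and then conjugating by $\mathcal{P}$ to obtain the affine block form. Your explicit check that the linear part is invertible, via $\det\mathcal{A}_n\neq 0$ and expansion along the last row, is a small point the paper leaves implicit, but the argument is otherwise the same.
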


\begin{proof}
Property (1) follows from the symmetry relations of Proposition \ref{equiv}.  Property (2) is derived from Proposition \ref{dd_n}, Equation (\ref{central}) combined with Proposition \ref{3v} Equation (\ref{central_2}).  This is demonstrated below:

\begin{equation*}
\begin{aligned}
\overrightarrow{\mathfrak{1}}^\nu &= \frac{1}{\gamma} (\mathcal{D}_n - 3\mathbb{I}_\nu)[n], \\
\left( \sum_{j=1}^\nu 3^{\nu-j}e_{j,j} \right)\overrightarrow{\mathfrak{1}}^\nu &= \left( \sum_{j=1}^\nu \frac{3^{\nu-j}}{\gamma} e_{j,j} \right) (\mathcal{D}_n - 3\mathbb{I}_\nu)[n], \\
\overrightarrow{\mathfrak{3}}^\nu &= \left( \sum_{j=1}^\nu \frac{3^{\nu-j}}{\gamma} e_{j,j} \right) (\mathcal{D}_n - 3\mathbb{I}_\nu)[n], \\
\left( \frac{\gamma}{2^\rho-3^\nu} \mathcal{M}_n \right)\overrightarrow{\mathfrak{3}}^\nu &= \mathcal{M}_n \left( \sum_{j=1}^\nu \frac{3^{\nu-j}}{2^\rho-3^\nu} e_{j,j} \right) (\mathcal{D}_n - 3\mathbb{I}_\nu)[n], \\
[n] &= \mathcal{M}_n \left( \sum_{j=1}^\nu \frac{3^{\nu-j}}{2^\rho-3^\nu} e_{j,j} \right) (\mathcal{D}_n - 3\mathbb{I}_\nu)[n].
\end{aligned}
\end{equation*}

\noindent The quantity in parenthesis is a diagonal matrix.  It is the $\nu \times \nu$ analogue of $\overrightarrow{\mathfrak{3}}^\nu$ with descending powers of 3 along the main diagonal scaled by $1/(2^\rho-3^\nu)$.  Property (3) is demonstrated in two steps.  We claim first that $\mathcal{A}_n$ can be written in the following block diagonal form:

\begin{equation}\label{a_n}
\mathcal{A}_n=\left[
\begin{array}{c|ccc}
1  & 0\times \mathfrak{1}_\nu^T\\ \hline
b_2  & a_{22}& \cdots & a_{2\nu} \\
\vdots & \vdots & \ddots & \vdots \\
 b_\nu  & a_{\nu2}& \cdots & a_{\nu\nu} 
\end{array}\right].
\end{equation}

\noindent Then all that is required is a sequence of row and column transformations.  We choose such transformations that retain the same sequential order of terms of the integral loop vector, though not in the same entries.  This is readily satisfied with conjugation by $\mathcal{P}$.

\begin{equation*}
\mathcal{P}\mathcal{A}_n\mathcal{P}^{-1}=\left[
\begin{array}{ccc|c}
a_{22}  & \cdots & a_{2\nu} & b_2\\ 
\vdots  & \ddots& \vdots & \vdots \\ 
a_{\nu 2} & \cdots & a_{\nu\nu} & b_\nu \\ \hline
0\times \mathfrak{1}_\nu^T & & & 1
\end{array}\right].
\end{equation*}

\noindent With that established, we now need to prove that $\mathcal{A}_n$ has this desired form.  We compose the product leading to $\mathcal{A}_n$ from left to right.

\begin{equation*}
\mathcal{M}_n \left( \sum_{j=1}^\nu \frac{3^{\nu-j}}{2^\rho-3^\nu} e_{j,j} \right) = \begin{bmatrix}\frac{3^{\nu-1}}{2^\rho-3^\nu} & \frac{3^{\nu-2}2^{m_1}}{2^\rho-3^\nu} & \frac{3^{\nu-3}2^{m_1+m_2}}{2^\rho-3^\nu} & ... & \frac{2^{\rho-m_\nu}}{2^\rho-3^\nu} \\ 
\frac{3^{\nu-1}}{2^\rho-3^\nu} & \frac{3^{\nu-2}2^{m_2}}{2^\rho-3^\nu} & \frac{3^{\nu-3}2^{m_2+m_3}}{2^\rho-3^\nu} & ... & \frac{2^{\rho-m_1}}{2^\rho-3^\nu} \\
\vdots & \vdots & \vdots & \vdots & \vdots \\
\frac{3^{\nu-1}}{2^\rho-3^\nu} & \frac{3^{\nu-2}2^{m_\nu}}{2^\rho-3^\nu} & \frac{3^{\nu-3}2^{m_\nu+m_1}}{2^\rho-3^\nu} & ... & \frac{2^{\rho-m_{\nu-1}}}{2^\rho-3^\nu} \end{bmatrix}.
\end{equation*}

\noindent We can observe that the row sums yield $[n]/\gamma$.  Now composing with the final matrix we have:

\begin{equation*}
\begin{aligned}
M_n & \left( \sum_{j=1}^\nu \frac{3^{\nu-j}}{2^\rho-3^\nu} e_{j,j} \right) \left(D_n -3 \mathbb{I}_\nu\right)  \\ &= \begin{bmatrix}\frac{3^{\nu-1}}{2^\rho-3^\nu} & \frac{3^{\nu-2}2^{m_1}}{2^\rho-3^\nu} & \frac{3^{\nu-3}2^{m_1+m_2}}{2^\rho-3^\nu} & ... & \frac{2^{\rho-m_\nu}}{2^\rho-3^\nu} \\ 
\frac{3^{\nu-1}}{2^\rho-3^\nu} & \frac{3^{\nu-2}2^{m_2}}{2^\rho-3^\nu} & \frac{3^{\nu-3}2^{m_2+m_3}}{2^\rho-3^\nu} & ... & \frac{2^{\rho-m_1}}{2^\rho-3^\nu} \\
\vdots & \vdots & \vdots & \vdots & \vdots \\
\frac{3^{\nu-1}}{2^\rho-3^\nu} & \frac{3^{\nu-2}2^{m_\nu}}{2^\rho-3^\nu} & \frac{3^{\nu-3}2^{m_\nu+m_1}}{2^\rho-3^\nu} & ... & \frac{2^{\rho-m_{\nu-1}}}{2^\rho-3^\nu} \end{bmatrix} \begin{bmatrix} -3 & 2^{m_1} & 0 & \cdots & 0 & 0 \\ 0 & -3 & 2^{m_2} & 0 & \cdots & 0 \\
\vdots & \vdots & \vdots & \vdots & \vdots & \vdots \\ 
2^{m_\nu} & 0 &\cdots & \cdots & \cdots & -3 \end{bmatrix}.
\end{aligned}
\end{equation*}

\noindent Considering the first row, we find that we have the following sum:

\begin{equation*} \begin{aligned} {}
[\mathcal{A}_n]_1 &= \left(\frac{-3^\nu + 2^\rho}{2^\rho-3^\nu}\right)e_{1,1} + \left(\frac{3^{\nu-1}2^{m_1}-3^{\nu-1}2^{m_1}}{2^\rho-3^\nu}\right)e_{1,2}, \\ &+ \left(\frac{3^{\nu-2}2^{m_1+m_2}-3^{\nu-2}2^{m_1+m_2}}{2^\rho-3^\nu}\right)e_{1,3} +... \\
&= \frac{2^\rho-3^\nu}{2^\rho-3^\nu}e_{1,1} + 0\times(e_{1,2}+...+e_{1,\nu}) = e_{1,1}.
\end{aligned}
\end{equation*}

\noindent Thus $\mathcal{A}_n$ has the desired form.  Regarding Property (4) consider Property (2),

\begin{equation*}
\begin{aligned} {}
[n] &= \mathcal{A}_n [n], \\
\mathcal{P}[n] &= \mathcal{P}\mathcal{A}_n [n], \\
\mathcal{P}[n] &= \mathcal{P}\mathcal{A}_n \mathcal{P}^{-1} \mathcal{P}[n].
\end{aligned}
\end{equation*}

\noindent Property (5) follows immediately.

\end{proof}

\bibliographystyle{abbrv}
\bibliography{ref}

\end{document}